\documentclass[11pt,reqno]{amsart}

\usepackage{amsmath, amsfonts, amsthm, amssymb}

\textwidth=15.0cm \textheight=21.0cm \hoffset=-1.1cm
\voffset=-0.5cm

\numberwithin{equation}{section}
\newtheorem{Theorem}{Theorem}[section]
\newtheorem{Lemma}{Lemma}[section]
\newtheorem{Corollary}{Corollary}[section]
\theoremstyle{definition}
\newtheorem{Definition}{Definition}[section]
\theoremstyle{remark}
\newtheorem{Remark}{Remark}[section]
\newtheorem{Proposition}{Proposition}[section]

\renewcommand{\r}{\rho}

\def\i{\varepsilon}

\newcommand{\R}{{\mathbb R}}
\newcommand{\Dv}{{\rm div}}

\newcommand{\dl}{\delta}

\def\f{\frac}

\def\ov{\overline}

\def\D{\Delta }

%{\u i}}
%{\u i}}

\def\hf1{^\f{1}{1-\xi^2}}

\def\be{\begin{equation}}
\def\en{\end{equation}}
\def\bs{\begin{split}}
\def\es{\end{split}}

\author{Xianpeng Hu and Dehua Wang}
\address{Courant Institute of Mathematical Sciences, New York University, New York, NY 10012.}
\email{xianpeng@cims.nyu.edu}
\address{Department of Mathematics, University of Pittsburgh,
                           Pittsburgh, PA 15260.}
\email{dwang@math.pitt.edu}

\title[Weak Stability for the Vlasov-Maxwell-Boltzmann Equations]
{Weak Stability and Large Time Behavior for the Cauchy Problem of the Vlasov-Maxwell-Boltzmann Equations}

\keywords{The Vlasov-Maxwell-Boltzmann equations, renormalized
solutions, weak stability, large time behavior}
\subjclass{76P05, 82B40, 82C40.}
\date{\today}

\begin{document}

\begin{abstract}

The Cauchy problem for the Vlasov-Maxwell-Boltzmann equations (VMB) is
considered.  First the renormalized solution to the Vlasov equation with the Lorentz force is discussed and the difficulty
on the partial differentiability of the coefficients is overcome. Then the weak stability of the renormalized solutions to the Cauchy
problem of VMB is established using  the compactness of velocity averages and a renormalized formulation. 
Furthermore,  the large time behavior of the renormalized solutions
to VMB is studied and it is proved that the density of particles tends to a local Maxwellian as the time goes to infinity.
\end{abstract}
\maketitle

\section{Introduction}
Since the work of DiPerna and Lions \cite{DL4} on the Cauchy
problem for the Boltzmann equation twenty years ago, it has been a
well-known open problem to extend their theory to the
Vlasov-Maxwell-Boltzmann equations. Among  the difficulties, how
to define the characteristics of the Vlasov-Maxwell-Boltzmann
equations is a major obstacle. In this paper, we will give the following partial
results: the weak stability and large time behavior of the renormalized solutions to the
Vlasov-Maxwell-Boltzmann equations, and existence of the renormalized solutions
to the Vlasov equation with the Lorentz force. The fundamental model for dynamics of
dilute charged particles is described by the
Vlasov-Maxwell-Boltzmann equations (VMB) of the following form
\cite{CC, DD, RTG, YG,DL3, SR}:
\iffalse%%%%%
\begin{subequations}
\begin{align}
&\f{\partial f}{\partial t}+\xi\cdot\nabla_x f+(E+\xi\times
B)\cdot\nabla_\xi f=Q(f,f),\quad x\in \R^3,\quad \xi\in \R^3,\quad
t\ge 0,\\
&\f{\partial E}{\partial t}-\nabla\times B=-j, \quad \Dv B=0,\quad
\textrm{on}\quad \R^3_x\times(0,\infty),\\
&\f{\partial B}{\partial t}+\nabla\times E=0, \quad\Dv E=\r,\quad
\textrm{on}\quad \R^3_x\times(0,\infty),\\
&\r=\int_{\R^3}fd\xi, \quad j=\int_{\R^3}f\xi d\xi, %\quad\textrm{for}\quad 1\le k\le3,
\quad \textrm{on}\quad \R^3_x\times(0,\infty),
\end{align}
\end{subequations}
\fi%%%%%%%
\begin{subequations}\label{21}
\begin{align}
&\f{\partial f}{\partial t}+\xi\cdot\nabla_x f+(E+\xi\times
B)\cdot\nabla_\xi f=Q(f,f),\quad x\in \R^3,\quad \xi\in \R^3,\quad
t\ge 0,\label{e211}\\
&\f{\partial E}{\partial t}-\nabla\times B=-j, \quad \Dv B=0,\quad
\textrm{on}\quad \R^3_x\times(0,\infty),\label{e212}\\
&\f{\partial B}{\partial t}+\nabla\times E=0, \quad\Dv E=\r,\quad
\textrm{on}\quad \R^3_x\times(0,\infty),\label{e213}\\
&\r=\int_{\R^3}fd\xi, \quad j=\int_{\R^3}f\xi d\xi, %\quad\textrm{for}\quad 1\le k\le3,
\quad \textrm{on}\quad \R^3_x\times(0,\infty), \label{e214}
\end{align}
\end{subequations}
where  $f=f(t,x,\xi)$ is a nonnegative function for the density of particles which at
time $t$ and position $x$ move with velocity $\xi$ under the
Lorentz force $$E+\xi\times B,$$  $E$ is the electric field,  $B$ is the magnetic field,
the function $j$ is called the current density, and  the function $\r$ is the charge density.
The collison operator $Q(f,f)$,
which acts only on the velocity dependence of $f$ (this reflects
the physical assumption that collisions are localized in space and
time), is defined as
$$Q(f,f)=\int_{\R^3}d\xi_*\int_{S^2}d\omega\,
b(\xi-\xi_*,\omega)(f'f'_*-ff_*),$$ with $\omega\in S^2$, the unit
sphere in $\R^3$, where $b=b(z,\omega)$ denotes the collision kernel which is a given nonnegative function defined on $\R^3\times S^2$,  and
$$f_*=f(t,x,\xi_*),\quad f'=f(t, x,\xi'),\quad f'_*=f(t, x,\xi'_*),$$  with
\begin{equation*}
\begin{split}
&\xi'=\xi-(\xi-\xi_*,\omega)\omega,\\
&\xi_*'=\xi_*+(\xi-\xi_*,\omega)\omega,
\end{split}
\end{equation*}
which yield one convenient parametrization of the set of solutions to the law
of elastic collisions
\begin{equation*}
\begin{split}
&\xi'+\xi'_*=\xi+\xi_*,\\
&|\xi'|^2+|\xi'_*|^2=|\xi|^2+|\xi_*|^2.
\end{split}
\end{equation*}
  The interpretation of $\xi$,
$\xi_*$, $\xi'$, $\xi_*'$ is the following: $\xi, \xi_*$ are the
velocities of two colliding molecules immediately before collision,
while $\xi', \xi'_*$ are the velocities immediately after the
collision. Those unknown functions
$f$, $E$, and $B$ are strongly coupled, and the constraint on the
divergence of $E$ will be ensured provided that the conservation
of charge holds; that is,
$$\f{\partial \r}{\partial t}+\Dv_x j=0,$$ since
\begin{equation*}
\begin{split}
0=\f{\partial}{\partial t}(\Dv_x E-\r)&=\Dv_x E_t-\r_t\\
&=\Dv_x(\nabla_x\times B-j)-\r_t\\
&=-\r_t-\Dv_x j,
\end{split}
\end{equation*}
due to the fact $\Dv(\nabla\times v)=0$ for any vector-valued
function $v$. Similarly, the magnetic field $B$ remains divergence free if
it is so initially.

The VMB equations are integro-differential equations which
provide  a mathematical model for the statistical evolution of
dilute charged particles. The construction of  global solutions
to  VMB has been open for a long time until only a
few years ago. In Guo \cite{YG}, a unique global in time classical
solution near a global Maxwellian (independent of   space and
  time) was constructed. See also Strain \cite{SR} for the extension to
the Cauchy problem. Notice that, Lions constructed in \cite{DL3} a
very weak solution to VMB, which is usually called a
measure-valued solution, using Young's measure to deal with the
nonlinearity.

For the particles without collision (cf. \cite{FFM, DL1, LP, RTG, GR,
SJ}), or when the molecules are so rare that they do not interact with each
other, VMB becomes  the so-called Vlasov-Maxwell system (VM),
\begin{subequations}
\begin{align}
&\f{\partial f}{\partial t}+\xi\cdot\nabla_x f+(E+\xi\times
B)\cdot\nabla_\xi f=0,\quad x\in \R^3,\quad \xi\in \R^3,\quad
t\ge 0, \label{q1}\\
&\f{\partial E}{\partial t}-\nabla\times B=-j, \quad \Dv B=0,\quad
\textrm{on}\quad \R^3_x\times(0,\infty),\\
&\f{\partial B}{\partial t}+\nabla\times E=0, \quad\Dv E=\r,\quad
\textrm{on}\quad \R^3_x\times(0,\infty),\\
&\r=\int_{\R^3}fd\xi, \quad j=\int_{\R^3}f\xi
d\xi, \quad \textrm{on}\quad \R^3_x\times(0,\infty).
\end{align}
\end{subequations}
Note that \eqref{q1} is a transport equation with a divergence
free coefficient, that is
$$\Dv_{x,\xi}(\xi, E+\xi\times B)=0.$$
This property ensures that the solution will remain the same
integrability as the initial data. With the help of this
observation and velocity averaging lemma, DiPerna and Lions proved
in \cite{DL1} the global existence in time of weak solutions to VM with large
initial data. For the smooth solutions to VM, we refer the
 readers  to Glassey \cite{RTG} and Schaeffer \cite{SJ}.

The main goal of this paper is  to show the weak stability and the
large time behavior of the renormalized solutions to VMB. To this
end, we will need an existence result of the renormalized solution to
the Vlasov equation \eqref{q1}. Notice that the Vlasov equation is a
transport equation with only partially $W^{1,1}_{loc}$
regularity, since usually we can not expect  any differentiability
on the magnetic field $B$ and the electric field $E$ from the
conservation of energy. Inspirited by the result in Bouchut
\cite{FB} and Le Bris-Lions \cite{LL}, we will first show the
existence of renormalized solutions to the Vlasov equation. The
presence of a non-trivial magnetic field $B(x,t)$, a natural
consequence of the celebrated Maxwell theory for electromagnetism,
creates severe mathematical difficulty in studying the weak stability of
weak solutions and the construction of global in time solutions
for VMB. Our first result on weak stability is built on our above
mentioned new result about renormalized solutions to the Vlasov
equation with the aid of the velocity average lemma (DiPerna-Lions
\cite{DL1} and DiPerna-Lions-Meyer \cite{DLM}) and some techniques
from Lions \cite{DL2, DL3}.  Our  second result on renormalized
solutions to VMB is their large time behavior, since from the
physical point of view, the density of particles is assumed to
converge to an equilibrium represented by a Maxwellian function of
the velocity as the time $t$ becomes large. Our results heavily
depend on, apart from the weak compactness property,
\begin{itemize} %\addtolength{\itemsep}{-0.5\baselineskip}
\item the existence of renormalized solutions to the Vlasov equation;
\item a renormalized formulation, which is crucial to make sure that the
quadratic term $Q(f,f)$ is meaningful in $\mathcal{D}'$ (sense of distributions); and
\item the  velocity averaging lemma \cite{DL1, DLM}, which is
crucial for the convergence of nonnlinear term $(E+\xi\times B)\cdot\nabla_\xi f$.
\end{itemize}

The stability of renormalized solutions under weak convergence
yields a consequence on the propagation of smoothness for those
solutions. Indeed, a sequence of renormalized solutions
$\{f_n\}_{n=1}^\infty$ to VMB is relatively strongly compact in
$L^1([0,T]\times\R^6)$ if and only if the sequence of the
corresponding initial data $\{f_{0n}\}_{n=1}^\infty$ is relatively
strongly compact in $L^1(\R^6)$. In other words, under our
assumption on the collision kernel and the integrability of the
electric field and the magnetic field, {\textit{no oscillations
develop unless they are present from the beginning}}.

In order to prove our results,  the standard \textit{a priori} estimates derived from the conservation laws and H theorem are very useful,  and in addition we need some assumptions on the integrability of the electric field
$E(t,x)$ and the magnetic field $B(t,x)$. More precisely,  besides the standard estimate of $E$ and $B$ in
$L^\infty(0,T; L^2(\R^3))$, we need to assume that $E$ is
uniformly bounded in $L^\infty(0,T; L^5(\R^3))$ and $B$ is
uniformly bounded in $L^\infty(0,T; L^s(\R^3))$ for some $s>5$.
The reasons for these requirements on $E$ and $B$ are twofold: (I)
when we define the characteristics for the Vlasov equation, we need
a bound on $E$ in $L^\infty(0,T; L^5(\R^5))$; (II)  the averaging lemma (cf. \cite{DL3}), combining with the
uniform bound of $\int_{\R^3}fd\xi$ in $L^\infty(0,T;
L^{\f{5}{3}}(\R^3))$ and the uniform bound of $\int_{\R^3}\xi
fd\xi$ in $L^\infty(0,T; L^{\f{5}{4}}(\R^3))$, implies the
compactness of the first two moments of $f$ on $L^p(0,T;
L^p_{loc}(R^3))$ for any $1\le p<\f{5}{4}$, which is enough to
ensure the convergence of the nonlinear Lorentz force term in the
sense of the distributions provided that $E$ and $B$ are uniformly
bounded in $L^\infty(0,T; L^5(\R^3))$ and $L^\infty(0,T;
L^s(\R^3))$ for some $s>5$.

We now remark that throughout this work we never
claim the existence of renormalized solutions to VMB. Actually,
all results in this paper are based on the assumption of such an
exact existence or the existence of a sequence of approximating
solutions. One possible direction to address the existence problem
 may be based on the construction of a sequence of exact
solutions or approximating solutions with the requirement that
the electric field $E$ is uniformly bounded in $L^\infty(0,T; L^5(\R^3))$.
 We notice that the hyperbolic property of the Maxwell
equations also demonstrates some difficulties if we want to
improve the integrability of the electric field and the magnetic
field. How to fulfill this strategy is still an open question and
will be the topic of our future research.

When the Lorentz force disappears, that is $E+\xi\times B=0$,
VMB becomes the classical Boltzmann equation. For the Cauchy
problem of the classical Boltzmann equation, in \cite{DL4}
DiPerna and Lions proved the global existence of renormalized
solutions with augular cut-off collision kernel and
arbitrary initial data, see also \cite{AV, CC, DL, DL1, DL5, DL2,
DL3} and the references cited therein. Later, Hamdache extended
this existence result to a bounded  domain in \cite{HK}. The method
explored for the existence result was the analysis of the weak
stability of solutions. The argument strongly relied on some
compactness properties (see \cite{DL2}) which  hold
for sequences of renormalized solutions. %  and will be also used throughout this paper as was already demonstrated in the aforementioned papers.
In \cite{DL3}, Lions extended the similar weak stability and global
existence result to the Vlasov-Poission-Boltzmann equations. For the
extension to the Landau equation, see Villani \cite{VC}. For the long time
behavior of the Boltzmann equations, see \cite{DD, DL5, LD, VC}.

This paper will proceed as follows. We will discuss the
renormalized solution to the Vlasov equation in Section 2. Section 3 is
devoted to stating $\textit{a priori}$ estimates for VMB, main
assumptions and main results on the weak stability of renormalized
solutions to VMB.  Then,  Theorem \ref{2T1} on weak stability and Theorem
\ref{2T2} on the propagation of smoothness will be proved  in Section 4 and Section 5,
respectively. In Section 6, we study the large time behavior and establish mathematically the
convergence of $f$ to a local Maxwellian satisfying the
Vlasov-Maxwell equations. Finally, in Section 7, we explain an
extension of our results to the relativistic Vlasov-Maxwell-Boltzmann
equations.

\bigskip

\section{Renormalized Solutions to the Vlasov Equation}

In this section, we consider the Vlasov equation of the form:
\begin{equation}\label{11}
\partial_t f+\xi\cdot\nabla_x f+(E+\xi\times B)\cdot\nabla_\xi
f=0,
\end{equation}
with $B(x,t)\in L^\infty(0,T;L^2(\R^3_x))$ and $E(x,t)\in
L^\infty(0,T; L^2\cap L^5(\R^3_x))$.

%Let us now make precise the mathematical setting and introduce some notation.
If we set
$$y=(x, \xi)\in \R^6, \quad\quad \mathcal{B}=(\xi, E+\xi\times B)\in\R^6,$$
 then \eqref{11} becomes a standard transport equation
\begin{equation}\label{12}
\partial_t f+\mathcal{B}\cdot\nabla f=0.
\end{equation}
The question of whether the Vlasov equation has renormalized solutions  is
not only useful when the normalized solution to
VMB system is considered, but also has its
own interest due to the lower regularity of the coefficients. The
renormalized solutions mean that \eqref{12} still holds if we
replace $f$ by $\beta(f)$ with a suitable $\beta$. Over past
twenty years, there are many important progress about the
renormalized solutions to \eqref{12}. More precisely,
DiPerna and Lions showed in \cite{DL} the existence of renormalized solutions
when the coefficient $\mathcal{B}\in W^{1,1}(\R^6)$. In 2004,
Ambrosio extended the DiPerna-Lions theory to BV (bounded
variations) field in \cite{AL} (for related work, see \cite{FB}).
Also, in 2004 Le Bris and Lions extended in \cite{LL} the
DiPerna-Lions theory to the case that the coefficient has only
partial regularity.

For the VMB or the Vlasov equation, the velocity $\mathcal{B}$ is
no longer in $W^{1,1}_{(x,\xi),loc}$. Inspired by \cite{FB, LL},
we claim that we still can prove the existence of a renormalized
solution to \eqref{11} under the conditions that $E(x,t)\in
L^\infty(0,T; L^2(\R^3)\cap L^5(\R^3))$, and $B(x,t)\in
L^\infty(0,T; L^2(\R^3))$. This is a crucial step for establishing
renormalized solutions to the  Vlasov-Maxwell-Boltzmann equations.

\begin{Theorem}\label{1T1}
Assume that $B(x,t)\in L^\infty(0,T; L^2(\R^3_x))$ and $E(x,t)\in
L^\infty(0,T; L^2\cap L^5(\R^3_x))$. Let $f_0\in L^1\cap
L^\infty(\R^6)$ and $|\xi|^2f_0\in L^1(\R^6)$. Then there exists a
solution to \eqref{11} (and hence to \eqref{12}) such that
\begin{equation*}
\begin{split}
f(t,x,\xi)\in L^\infty([0,T], L^1_{x,\xi}\cap
L^\infty_{x,\xi}(\R^3)),
\end{split}
\end{equation*}
and $|\xi|^2f\in L^\infty(0,T; L^1(\R^6))$,
satisfying the initial condition $f|_{t=0}=f_0(x,\xi)$.
Furthermore, if $f_0\in L^\infty_x(L^1_\xi(\R^3))$, then $f\in
L^\infty(0,T; L^\infty_x(L^1_\xi(\R^3)))$, and hence the solution
is unique.
\end{Theorem}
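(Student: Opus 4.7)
The plan is to construct $f$ by a regularization/passage-to-limit procedure in the spirit of DiPerna--Lions, Bouchut \cite{FB}, and Le Bris--Lions \cite{LL}. The essential structural point is that the drift $\mathcal{B}=(\xi,E+\xi\times B)$ is divergence-free in $(x,\xi)$ (since $\Dv_\xi(E+\xi\times B)=0$) and has a partially-regular splitting: its $x$-component depends only on $\xi$ and is hence smooth in $x$, while its $\xi$-component depends smoothly (in fact linearly) on $\xi$ with only $L^p_x$ integrability in $x$. This is precisely the framework of \cite{LL}.

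\textbf{Step 1: approximation and uniform estimates.} Mollify $E,B,f_0$ in the space variable to obtain smooth $E_n,B_n,f_{0,n}$ converging strongly in $L^\infty(0,T;L^2\cap L^5)$, $L^\infty(0,T;L^2)$, and $L^1\cap L^\infty$ respectively. For each $n$, the smooth coefficient $(\xi,E_n+\xi\times B_n)$ admits a classical characteristic flow that solves \eqref{11} with the approximated data; since the flow is measure preserving one gets $\|f_n(t)\|_{L^p_{x,\xi}}=\|f_{0,n}\|_{L^p}$ for every $p\in[1,\infty]$. For the energy, multiply \eqref{11} by $|\xi|^2$ and integrate; the magnetic contribution is killed by the pointwise orthogonality $(\xi\times B_n)\cd\xi=0$, leaving
\be
\f{d}{dt}\int_{\R^6}|\xi|^2 f_n\,dx\,d\xi=2\int_{\R^3}E_n\cd\Big(\int_{\R^3}\xi f_n\,d\xi\Big)dx.
\en
Using the classical moment interpolation $\big\|\int|\xi|f_n\,d\xi\big\|_{L^{5/4}_x}\lesssim \|f_n\|_{L^\infty}^{2/5}(\int|\xi|^2 f_n\,dx\,d\xi)^{4/5}$, paired by H\"older with $\|E_n\|_{L^5_x}$, Gronwall closes the energy estimate. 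This explains the role of the $L^5$ hypothesis on $E$: it is the threshold integrability that calibrates against the Lions moment bound.

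\textbf{Step 2: renormalized passage to the limit (main obstacle).} Extract $f_n\rightharpoonup^\ast f$ in $L^\infty(0,T;L^p_{x,\xi})$ for all $p\in[1,\infty]$, with $|\xi|^2 f\in L^\infty_tL^1$ by lower semicontinuity. Because $f_n$ is smooth and $\mathcal{B}_n$ is divergence-free, for any $\beta\in C^1(\R)$ with $\beta'$ bounded one has the divergence-form identity
\be
\del_t\beta(f_n)+\Dv_x\big(\xi\beta(f_n)\big)+\Dv_\xi\big((E_n+\xi\times B_n)\beta(f_n)\big)=0.
\en
The challenge is to pass to the limit in the third term despite the absence of any Sobolev regularity on $(E,B)$ in $x$. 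Following \cite{FB,LL}, I would convolve in $\xi$ and study the commutator between the mollification and the $\xi$-drift: the partial-regularity structure and the fact that $E+\xi\times B$ is divergence-free in $\xi$ make this commutator tend to zero in $\mathcal{D}'$, which upgrades the weak-$\ast$ convergence of $f_n$ to strong $L^1_{loc}$ convergence and in particular forces $\beta(f_n)\to\beta(f)$. The linear growth of $\xi\times B_n$ in $|\xi|$ is absorbed by a truncation in $\xi$ controlled through the Step~1 energy bound. Combining this with the strong convergence $E_n\to E$ in $L^\infty_tL^5_{x,loc}$ and $B_n\to B$ in $L^\infty_tL^2_{x,loc}$ passes the identity to the limit; specializing to $\beta(u)=u$ (via $\beta_\delta(u)=u/(1+\delta u)$ and $\delta\to 0$) delivers the distributional solution asserted.

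\textbf{Step 3: uniqueness under $f_0\in L^\infty_x(L^1_\xi)$.} The additional bound $\int f\,d\xi\in L^\infty_{t,x}$ is propagated in time (the $\xi$-drift is $\xi$-divergence-free, so marginalization in $\xi$ commutes with the transport structure). This is precisely the integrability needed to run the Le Bris--Lions commutator estimate on the difference $g=f-\tilde f$ of two renormalized solutions with identical data; taking $\beta(u)=u^2$ in the resulting renormalized equation for $g$ and using the divergence-free structure of $\mathcal{B}$ yields $g\equiv 0$. The hardest ingredient in the whole argument is Step~2: reconciling the nonlinear renormalized identity with the lack of any spatial derivative control on the Lorentz force, which is made possible only by the partial-regularity/divergence-free structure of $\mathcal{B}$ and the $L^5$ calibration of the energy estimate of Step~1.
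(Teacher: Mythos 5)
Your outline follows the same high-level route as the paper (regularize the field, interpolate the moment bound against $\|E\|_{L^5}$ via Gr\"onwall, then use a partial-regularity commutator and a renormalized uniqueness argument), but it skips precisely the two places where the paper's proof does its real work, and both skips are substantive rather than cosmetic.

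First, the commutator step. You say you would ``convolve in $\xi$ and study the commutator between the mollification and the $\xi$-drift.'' Mollifying in $\xi$ alone is not enough here, and this is the key subtlety of the paper's Lemma~\ref{1L1}. The spatial transport term $\xi\cdot\nabla_x f$ has a coefficient that depends on $\xi$, so $(\xi\cdot\nabla_x f)\ast_\xi\kappa_\mu\neq\xi\cdot\nabla_x(f\ast_\xi\kappa_\mu)$; the discrepancy involves $\nabla_x f$, which is not controlled. One is forced to mollify in $x$ as well, and the cross-commutator $I_2^{\mu,\varepsilon}=(\xi\cdot\nabla_x f)\ast\kappa_\mu\ast\kappa_\varepsilon-\xi\cdot\nabla_x(f\ast\kappa_\mu\ast\kappa_\varepsilon)$ is only $O(\mu/\varepsilon)$ in $L^1_{loc}$; the lemma then requires a diagonal choice $\mu(\varepsilon)\le\varepsilon^2$ to kill it. Your write-up treats a single-variable mollification as if it closed the argument; it does not, and without the two-scale trick there is a genuine hole. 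Also, in Step~2 you claim the commutator ``upgrades the weak-$\ast$ convergence of $f_n$ to strong $L^1_{loc}$ convergence.'' That reasoning is circular (the DiPerna--Lions strong-convergence mechanism relies on uniqueness, which you establish only in Step~3) and also unnecessary: existence needs no strong convergence of $f_n$, because the coefficient approximants converge strongly, so $(E_n+\xi\times B_n)f_n\rightharpoonup(E+\xi\times B)f$ already suffices to pass to the limit in the linear equation.

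Second, the uniqueness step. Writing ``take $\beta(u)=u^2$ and use the divergence-free structure'' hides all the difficulty, which is that integrating $\partial_t\beta(g)+\Dv(\mathcal B\beta(g))=0$ over $\R^6$ requires showing the boundary terms from a $\xi$-truncation $\phi(\xi/n)$ and an $x$-truncation $\psi(x/m)$ vanish. The Lorentz coefficient grows linearly in $|\xi|$, and $B$ is only in $L^2_x$, so a naive estimate of the $\xi$-boundary term does not close. The paper's Lemma~\ref{1L2} closes it with the crucial cancellation $\f{|E+\xi\times B|}{1+|\xi|}\le|E|+|B|$, paired with the interpolated integrability $f\in L^2_x(L^1_\xi)$ (obtained from $f\in L^1\cap L^\infty\cap L^\infty_x(L^1_\xi)$), so that the remaining integrand is bounded by $\|f\|_{L^1_\xi}^2+|E|^2+|B|^2\in L^1_x$ and dominated convergence applies; a separate argument using $|\xi|^2 f\in L^1$ handles the $x$-boundary term. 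None of this is visible in your proposal, and it is exactly the mechanism that makes the $L^\infty_x(L^1_\xi)$ hypothesis the right one for uniqueness. Your Step~1 energy estimate and the role of $L^5$ are correct and match the paper; the two gaps above are what you would need to fill.
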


To begin with the proof, notice that
$\mathcal{B}=(\mathcal{B}_1,\mathcal{B}_2)$ satisfies
$$\Dv_x{\mathcal{B}_1}=\Dv_{\xi}{\mathcal{B}_2}=0,$$
with
$$\mathcal{B}_1(x,\xi)=\xi\in W^{1,1}_{\xi,loc}(\R^3) \quad(\textrm{it does not depend on} \; x),$$
$$\mathcal{B}_2(x,\xi)=E+\xi\times B\in L^1_{x, loc}(\R^3, W^{1,1}_{\xi, loc}(\R^3)).$$
The proof of this theorem is divided into three steps. The
uniqueness  is a crucial issue which is the consequence of
the following two lemmas, the first one dealing with
regularization, and the second one stating the uniqueness. Finally, we will
show the existence part.

Now we denote the mollifier $\kappa_\i$ as
$$\kappa_{\varepsilon}=\f{1}{\varepsilon^n}\kappa\left(\f{\cdot}{\varepsilon}\right),
\qquad\kappa\in \mathcal{D}(\R^3),\qquad
\int_{\R^3}\kappa=1,\qquad \kappa\ge 0,$$
where $\mathcal{D}(\R^3)=C_0^\infty(\R^3)$.
Then, we have the following two lemmas.

\begin{Lemma}\label{1L1}
Let $f=f(t,x,\xi)\in L^\infty([0,T], L^1_{(x,\xi)}\cap
L^\infty_{(x,\xi)}(\R^6))$ be a solution of \eqref{12}, and
$\kappa_{\varepsilon}$ and $\kappa_{\mu}$ be two regularizations
with two different scalings, respectively, in the variable $x$ and
$\xi$. Then,  for any $\varepsilon>0$, there exists a number $\mu(\varepsilon)$ with
$0<\mu(\varepsilon)\le\varepsilon^2$ such that
$$f_{\varepsilon,\mu(\varepsilon)}=(f*\kappa_{\varepsilon})*\kappa_{\mu(\varepsilon)}$$
is a smooth (in $(x,\xi)$) solution of
$$\f{\partial f_{\varepsilon, \mu(\varepsilon)}}{\partial
t}+\mathcal{B}\cdot\nabla f_{\varepsilon,
\mu(\varepsilon)}=\mathcal{A}_{\varepsilon},$$ with
$$\lim_{\varepsilon\rightarrow
0}\mathcal{A}_{\varepsilon}=0,\quad\textrm{in}\quad
L^\infty\big([0,T], L^1_{(x,\xi),loc}\cap L^\infty_{(x,\xi),
loc}(\R^6)\big).$$
\end{Lemma}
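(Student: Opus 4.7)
The plan is a two-scale mollification in the spirit of DiPerna--Lions and Le~Bris--Lions: mollify first in $x$ at scale $\varepsilon$ and then in $\xi$ at scale $\mu=\mu(\varepsilon)\le\varepsilon^2$, with $\mu(\varepsilon)$ fixed by a diagonal argument that balances the three resulting commutator errors.

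First I would put \eqref{12} into conservation form (using $\Dv_\xi(E+\xi\times B)=0$) and convolve with $\kappa_\varepsilon$ in $x$. Since $\xi$ does not depend on $x$, the only commutator comes from the $\xi$-transport term and yields
\[
\partial_t g_\varepsilon+\mathcal{B}\cdot\nabla g_\varepsilon=\Dv_\xi S_\varepsilon,\qquad g_\varepsilon:=f*_x\kappa_\varepsilon,
\]
where
\[
S_\varepsilon(t,x,\xi)=\int\kappa_\varepsilon(x-y)\bigl[(E(t,x)-E(t,y))+\xi\times(B(t,x)-B(t,y))\bigr]f(t,y,\xi)\,dy.
\]
Because $f\in L^\infty$, strong continuity of translations in $L^5_x$ (for $E$) and $L^2_x$ (for $B$) furnishes a modulus $\omega(\varepsilon)\to 0$ controlling $S_\varepsilon$ in the relevant local norms; no quantitative rate is available because $E,B$ carry no $x$-derivatives.

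Next I would convolve the $g_\varepsilon$-equation with $\kappa_\mu$ in $\xi$, producing three pieces whose sum is the error $\mathcal{A}_\varepsilon$: (a) the commutator of $\xi\cdot\nabla_x$ with $*_\xi\kappa_\mu$ can be written as $\nabla_x\cdot(g_\varepsilon*_\xi\bar\kappa_\mu)$ with $\bar\kappa_\mu(z)=z\kappa_\mu(z)$ of $L^1$-norm $\lesssim\mu$, hence it is bounded by $\mu\|\nabla_x g_\varepsilon\|\lesssim\mu/\varepsilon$; (b) the standard DiPerna--Lions $\xi$-commutator of $\mathcal{B}_2\cdot\nabla_\xi$ is well-behaved because $\nabla_\xi\mathcal{B}_2$ is the antisymmetric cross-product matrix of $B(x)$ (hence of trace zero), so integration by parts in the convolution variable reduces it to an integral controlled by the $L^1_\xi$-translation modulus of $g_\varepsilon$, tending to zero as $\mu\to 0$ for each fixed $\varepsilon$; (c) the residue from Step~1 is $(\Dv_\xi S_\varepsilon)*_\xi\kappa_\mu=S_\varepsilon*_\xi\nabla_\xi\kappa_\mu$ of size $\lesssim\omega(\varepsilon)/\mu$. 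The constraint $\mu(\varepsilon)\le\varepsilon^2$ automatically gives $\mu/\varepsilon\to 0$ (handling (a)); for each fixed $\varepsilon$ one picks $\mu$ small enough to make (b) arbitrarily small, while keeping $\mu$ large enough that $\omega(\varepsilon)/\mu\to 0$ (handling (c)). Such a diagonal $\mu(\varepsilon)\in(0,\varepsilon^2]$ exists precisely because $\omega(\varepsilon)\to 0$, and it delivers the convergence $\mathcal{A}_\varepsilon\to 0$ claimed in the lemma.

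The main obstacle is the $x$-commutator $S_\varepsilon$: because neither $E$ nor $B$ possesses $x$-derivatives, the classical DiPerna--Lions $W^{1,1}$ commutator estimate in the $x$-direction is unavailable and one must rely only on non-quantitative translation continuity in $L^p_x$. This is exactly the Le~Bris--Lions partial-regularity situation, and the separation of scales with $\mu(\varepsilon)\le\varepsilon^2$ tuned to $\omega(\varepsilon)$ rather than to $\varepsilon$ is the essential device that absorbs the singular $\Dv_\xi$ acting on $S_\varepsilon$ while still killing the $\xi$-commutator.
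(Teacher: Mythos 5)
Your proof follows the same two-scale mollification strategy as the paper, with the same three commutator errors (a DiPerna--Lions $\xi$-commutator that vanishes as $\mu\to 0$ uniformly in $\varepsilon$; an $x$-commutator that vanishes as $\varepsilon\to 0$ for each fixed $\mu$ and is the one governed by the unquantified translation modulus of $E,B$; and the $\xi\cdot\nabla_x$ commutator of size $O(\mu/\varepsilon)$, absorbed by $\mu\le\varepsilon^2$), followed by the identical diagonalization. The one genuine difference is the order: you mollify in $x$ first and then in $\xi$, whereas the paper mollifies in $\xi$ first (equation \eqref{13}) and then in $x$ (equation \eqref{15}). The paper's order has the small technical advantage that the intermediate object $f_\mu=f*_\xi\kappa_\mu$ is already smooth in $\xi$, so the $x$-commutator appears directly as the genuine function $[(E+\xi\times B),\kappa_\varepsilon](\nabla_\xi f_\mu)\in L^1_{loc}$ and no distributional step is needed; in your order the intermediate error $\Dv_\xi S_\varepsilon$ is the $\xi$-derivative of an $L^1$ function, i.e.\ only a distribution, which you correctly postpone until the final convolution with $\kappa_{\mu}$ turns it into $S_\varepsilon*_\xi\nabla\kappa_\mu$. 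Your explicit bookkeeping of the $x$-commutator via a modulus $\omega(\varepsilon)$ (so that the error scales like $\omega(\varepsilon)/\mu$) is a legitimate quantitative rephrasing of what the paper handles qualitatively as an iterated limit $\lim_{\mu\to 0}\lim_{\varepsilon\to 0}$; both arguments invoke the same balancing at the diagonalization step, so the two proofs stand or fall together. In short: correct, and essentially the paper's proof with the order of regularization reversed.
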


\begin{Lemma}\label{1L2}
Let $f=f(t,x,\xi)\in L^\infty\big([0,T], L^1_{(x,\xi)}\cap
L^\infty_{(x,\xi)}(\R^3)\big)$ be a nonnegative solution of
\eqref{12} with zero initial data $f_0=0$.  If,  in addition,
$|\xi|^2 f\in L^\infty([0,T],L^1(\R^6))$ and $f\in
L^2_x(L^1_{\xi})$, then $f=0$ for all time $t>0$.
\end{Lemma}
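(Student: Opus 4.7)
The plan is to combine the regularization of Lemma \ref{1L1} with an $(x,\xi)$-cut-off argument, and then push the cut-off to infinity using the moment and integrability hypotheses on $f$, $E$ and $B$. Fix a cut-off $\chi_R(x,\xi)=\psi(|x|/R)\psi(|\xi|/R)$ with $\psi\in C_c^\infty(\R)$, $\psi\equiv 1$ on $[0,1]$, $\supp\psi\subset[0,2]$. Multiplying the equation of Lemma \ref{1L1} by $\chi_R$, integrating over $\R^6$, and using $\Dv\mathcal B=0$ distributionally to transfer the derivative off $f_{\varepsilon,\mu(\varepsilon)}$, one obtains
$$\f{d}{dt}\int f_{\varepsilon,\mu(\varepsilon)}\chi_R\,dxd\xi = \int f_{\varepsilon,\mu(\varepsilon)}\,\mathcal B\cdot\nabla\chi_R\,dxd\xi + \int\mathcal A_\varepsilon\chi_R\,dxd\xi.$$
Integrating in time and letting $\varepsilon\to 0$, the error vanishes by Lemma \ref{1L1} (since $\chi_R$ has compact support), and $f_{\varepsilon,\mu(\varepsilon)}\to f$ in $L^\infty_t(L^1_{loc})$ handles the remaining term; using $f_0=0$ this gives
$$\int f(t)\chi_R\,dxd\xi \;=\; \int_0^t\!\!\int f\,\mathcal B\cdot\nabla\chi_R\,dxd\xi\,ds.$$

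Now send $R\to\infty$. The left-hand side increases to $\int f(t)\,dxd\xi$ by monotone convergence. The right-hand side splits into three pieces, carried by $\xi\cdot\nabla_x\chi_R$, $E\cdot\nabla_\xi\chi_R$ and $(\xi\times B)\cdot\nabla_\xi\chi_R$. Since $|\nabla\chi_R|\le C/R$, the first is bounded by $CR^{-1}\!\int_0^t\!\int_{|x|\sim R}|\xi|f\,dxd\xi\,ds$ and vanishes because $|\xi|f\le f+|\xi|^2f\in L^1$. Writing $\rho_R(x):=\int_{|\xi|\sim R}f(t,x,\xi)\,d\xi$, the electric piece is controlled by $CR^{-1}\int_0^t\|E\|_{L^2_x}\|\rho_R\|_{L^2_x}\,ds$, which vanishes since the hypothesis $f\in L^2_x(L^1_\xi)$ gives $\rho\in L^2_x$ and $E\in L^\infty_tL^2_x$.

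The delicate piece is the magnetic one: on $\supp(\nabla_\xi\chi_R)$ one has $|\xi||\nabla_\xi\chi_R|\le C$, so the $R^{-1}$ decay is lost and the estimate reduces to
$$\int_0^t\!\!\int|\xi\times B||\nabla_\xi\chi_R|\,f\,dxd\xi\,ds \;\le\; C\int_0^t\|B\|_{L^2_x}\|\rho_R\|_{L^2_x}\,ds.$$
The key observation is that $\rho_R(x)\le\rho(x)\in L^2_x$ with $\rho_R\to 0$ pointwise a.e.\ as $R\to\infty$, so $\|\rho_R\|_{L^2_x}\to 0$ by dominated convergence, and the magnetic piece vanishes as well. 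Combining, $\int f(t)\,dxd\xi=0$, and non-negativity of $f$ forces $f\equiv 0$. The main obstacle is exactly this magnetic term: because $\xi\times B$ grows linearly in $\xi$ while $\nabla_\xi\chi_R$ is only $O(R^{-1})$ on $\{|\xi|\sim R\}$, their product is $O(1)$; the extra hypothesis $f\in L^2_x(L^1_\xi)$, yielding $\rho\in L^2_x$, is precisely what permits pairing with $B\in L^\infty_tL^2_x$ via H\"older and concluding by dominated convergence.
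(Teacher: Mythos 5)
Your proof is correct and follows essentially the same approach as the paper's: cut off in $(x,\xi)$, integrate the (regularized) equation, transfer the derivative off $f$ using $\Dv\mathcal B=0$, and pass the cut-off scale to infinity, isolating the magnetic term $(\xi\times B)\cdot\nabla_\xi\chi$ as the delicate one because the linear growth in $\xi$ cancels the $O(R^{-1})$ decay of the gradient of the cut-off, so that one must pair with $B\in L^\infty_t L^2_x$ via Cauchy--Schwarz and the hypothesis $f\in L^2_x(L^1_\xi)$, concluding by dominated convergence. The only cosmetic differences are that you use a single cut-off with a common scale $R$ in $x$ and $\xi$, whereas the paper uses two separate cut-offs $\psi_m(x)$, $\phi_n(\xi)$ and sends $n\to\infty$ before $m\to\infty$, and you make explicit the use of Lemma \ref{1L1} for the regularization step that the paper leaves implicit.
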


We now prove   these two lemmas, and then finally complete
the proof of Theorem \ref{1T1}.

\subsection{Proof of Lemma \ref{1L1}}

%\begin{proof}[{\bf Proof of Lemma \ref{1L1}.}]
We will use the mollifier to regularize the function $f$ in $\xi$ and $x$, while we assume  that $f$ is differentiable with respect to $t$ (the results below hold also for the general case from a standard mollification in $t$ with the help of Lebesgue's dominated theorem.)
All the functional spaces used here
are $\textit{local}$, which is clearly enough for such a
regularization result.

We first regularize in the $\xi$ variable by convoluting
\eqref{12} with $\kappa_{\mu}$ to get
\begin{equation}\label{13}
\f{\partial f*\kappa_{\mu}}{\partial t}+(\xi\cdot\nabla_x
f)*\kappa_{\mu}+((E+\xi\times B)\cdot\nabla_\xi f)*\kappa_{\mu}=0.
\end{equation}
Denoting by
$$[(E+\xi\times B)\cdot\nabla_\xi,
\kappa_{\mu}](f)=(E+\xi\times
B)\cdot\nabla_\xi(f*\kappa_{\mu})-\kappa_{\mu}*((E+\xi\times
B)\cdot\nabla_\xi f).$$ Then, \eqref{13} can be rewritten as
\begin{equation}\label{14}
\begin{split}
\f{\partial f*\kappa_{\mu}}{\partial t}+(\xi\cdot\nabla_x
f)*\kappa_{\mu}+(E+\xi\times
B)\cdot\nabla_\xi(f*\kappa_{\mu})=[(E+\xi\times B)\cdot\nabla_\xi,
\kappa_{\mu}](f).
\end{split}
\end{equation}

It is a standard fact (see \cite{DL}) that
\begin{equation}\label{16}
I_1^\mu:=[(E+\xi\times B)\cdot\nabla_\xi,
\kappa_{\mu}](f)\rightarrow 0 \quad\textrm{in}\quad
L^1_{(x,\xi),loc}
\end{equation}
as $\mu\rightarrow 0$. Indeed, this is clear for smooth coefficients
and $f$, while the general case follows as in \cite{DL} by dense property
through the estimate
$$\|[(E+\xi\times B)\cdot\nabla_\xi,
\kappa_{\mu}](f)\|_{L^1_{\xi,loc}}\le C\|E+\xi\times
B\|_{W^{1,1}_{\xi,loc}}\|f\|_{L^\infty_{\xi}},$$ which then
implies the following standard estimate by integrating in $x$, 
$$\|[(E+\xi\times B)\cdot\nabla_\xi,
\kappa_{\mu}](f)\|_{L^1_{(x,\xi),loc}}\le C\|E+\xi\times
B\|_{L^1_{x,loc}(W^{1,1}_{\xi,loc})}\|f\|_{L^\infty_{x,\xi}}.$$

Next, we regularize in the $x$ variable by convoluting \eqref{14}
with $\kappa_\varepsilon$ for $f_\mu=f*\kappa_\mu$ to obtain,
\begin{equation}\label{15}
\begin{split}
&\f{\partial f_\mu*\kappa_{\varepsilon}}{\partial
t}+(\xi\cdot\nabla_x
f)*\kappa_{\mu}*\kappa_\varepsilon+(E+\xi\times
B)\cdot\nabla_\xi(f_\mu*\kappa_{\varepsilon})\\&\quad=[(E+\xi\times
B)\cdot\nabla_\xi,
\kappa_{\varepsilon}](f_\mu)+I_1^\mu*\kappa_\varepsilon.
\end{split}
\end{equation}

We now successively deal with each terms on the right-hand side of
\eqref{15}. First, it is easy to observe that for fixed $\mu$, we
have
$$I_1^\mu*\kappa_\varepsilon\rightarrow I_1^\mu,\quad\textrm{as}\quad
\varepsilon\rightarrow 0,$$ in $L^1_{(x,\xi), loc}$, which together with \eqref{16} implies that
\begin{equation}\label{17}
\lim_{\mu\rightarrow 0}\lim_{\varepsilon\rightarrow 0}
I_1^\mu*\kappa_\varepsilon=0,\quad\textrm{in}\quad
L^1_{(x,\xi),loc}.
\end{equation}
Second, for the first term on the right-hand side of \eqref{15},
we have
\begin{equation*}
\begin{split}
[(E+\xi\times B)\cdot\nabla_\xi, \kappa_{\varepsilon}](f_\mu)
&=(E+\xi\times
B)\cdot\nabla_\xi(\kappa_\varepsilon*f_\mu)-\kappa_\varepsilon*\big((E+\xi\times
B)\cdot\nabla_\xi f_\mu\big)\\
&=(E+\xi\times B)\cdot\big((\nabla_\xi
f_\mu)*\kappa_\varepsilon\big)-\kappa_\varepsilon*\big((E+\xi\times
B)\cdot\nabla_\xi f_\mu\big)\\
&=\big[(E+\xi\times B),\kappa_\varepsilon\big](\nabla_\xi f_\mu).
\end{split}
\end{equation*}
The latter bracket can be controlled as follows:
$$\Big\|\big[(E+\xi\times B),\kappa_\varepsilon\big](\nabla_\xi
f_\mu)\Big\|_{L^1_{(x,\xi),loc}}\le C\|E+\xi\times
B\|_{L^1_{(x,\xi), loc}}\|\nabla_\xi
f_\mu\|_{L^\infty_{(x,\xi)}}.$$ Hence, for fixed $\mu$, we have
\begin{equation*}
\lim_{\varepsilon\rightarrow 0}[(E+\xi\times B)\cdot\nabla_\xi,
\kappa_{\mu}](f_\mu)=0,
\end{equation*}
in $L^1_{(x,\xi),loc}.$ This implies,
\begin{equation}\label{18}
\lim_{\mu\rightarrow 0}\lim_{\varepsilon\rightarrow
0}[(E+\xi\times B)\cdot\nabla_\xi, \kappa_{\mu}](f_\mu)=0,
\end{equation}
in $L^1_{(x,\xi),loc}.$ By a standard diagonization procedure, for
any $\varepsilon>0$, we can find $\mu(\varepsilon)$ with $0<\mu(\varepsilon)\le
\varepsilon^2\rightarrow 0$ such that
\begin{equation*}
\lim_{\varepsilon\rightarrow 0}
I_1^{\mu(\varepsilon)}*\kappa_\varepsilon=0,\quad\textrm{in}\quad
L^1_{(x,\xi),loc}.
\end{equation*}
and
\begin{equation*}
\lim_{\varepsilon\rightarrow 0}[(E+\xi\times B)\cdot\nabla_\xi,
\kappa_{\mu(\varepsilon)}](f_{\mu(\varepsilon)})=0,\quad\textrm{in}\quad
L^1_{(x,\xi),loc}.
\end{equation*}

To complete the proof of this lemma,  it remains to show the following convergence for the above chosen
$\mu(\varepsilon)$:
$$I_2^{\mu(\varepsilon),\varepsilon}=(\xi\cdot\nabla_x
f)*\kappa_{\mu(\varepsilon)}*\kappa_\varepsilon-\xi\cdot\nabla_x(f_{\mu(\varepsilon)}*\kappa_\varepsilon)$$
in $L^1_{(x,\xi),loc}$. Indeed, we can control
$I^{\mu(\varepsilon),\varepsilon}_2$ as
\begin{equation*}
\begin{split}
\left|I^{\mu(\varepsilon),\varepsilon}_2\right|&=\left|\int_{\R^6}\big[(\xi-\eta)\cdot\nabla_x
f(x-\zeta, \xi-\eta)-\xi\cdot\nabla_x f(x-\zeta,
\xi-\eta)\big]\kappa_{\mu(\varepsilon)}\kappa_\varepsilon d\zeta d\eta\right|\\
&=\left|\int_{\R^6}\big[\eta\cdot\nabla_x
f(x-\zeta,\xi-\eta)\big]\kappa_{\mu(\varepsilon)}\kappa_\varepsilon d\zeta d\eta\right|\\
&=\left|\int_{\R^6}\eta\cdot\nabla_\zeta\kappa_\varepsilon(\zeta)\kappa_{\mu(\varepsilon)}(\eta)
f(x-\zeta, \xi-\eta)d\zeta d\eta\right|\\
&\le
\f{\mu(\varepsilon)}{\varepsilon}\int_{\R^6}|\varepsilon\nabla_\zeta\kappa_\varepsilon|
\kappa_{\mu(\varepsilon)}|f(x-\zeta, \xi-\eta)|d\zeta d\eta.
\end{split}
\end{equation*}
Thus, we deduce that, for any compact subset $K\subset \R^6$, by
Fubini's theorem,
\begin{equation}\label{19}
\begin{split}
\|I^{\mu(\varepsilon),\varepsilon}_2\|_{L^1(K)}&=\int_K\left|\int_{\R^6}\eta\cdot\nabla_\zeta\kappa_\varepsilon(\zeta)\kappa_{\mu(\varepsilon)}(\eta)
f(x-\zeta, \xi-\eta)d\zeta d\eta\right|dx d\xi\\
&\le C
\f{\mu(\varepsilon)}{\varepsilon}\Big(\int_{\R^3}|\varepsilon\nabla_\zeta\kappa_\varepsilon|
d\zeta\Big)\sup_{|\zeta|\le\varepsilon,
|\eta|\le\mu(\varepsilon)}\|f(x-\zeta,
\xi-\eta)\|_{L^1(K)}\\
&\le
C\f{\mu(\varepsilon)}{\varepsilon}\sup_{|\zeta|\le\varepsilon,
|\eta|\le\mu(\varepsilon)}\|f(x-\zeta, \xi-\eta)\|_{L^1(K)}.
\end{split}
\end{equation}

Since $f\in L^1_{(x,\xi)}$, one has, according to the
continuity of translation in $L^1(K)$,
$$\sup_{|\zeta|\le\varepsilon,
|\eta|\le\mu(\varepsilon)}\|f(x-\zeta,
\xi-\eta)-f(x,\xi)\|_{L^1(K)}\rightarrow 0,\quad
\textrm{as}\quad\varepsilon\rightarrow 0,$$ and hence,
$$\sup_{|\zeta|\le\varepsilon,
|\eta|\le\mu(\varepsilon)}\|f(x-\zeta,
\xi-\eta)\|_{L^1(K)}\quad\textrm{is uniforly bounded for all}\quad
\i\le 1.$$ Thus, if we let $\varepsilon \rightarrow 0$ and
$0\le\mu(\i)\le \i^2$, we deduce from \eqref{19} that
\begin{equation}\label{110}
I^{\mu(\varepsilon),\varepsilon}_2\rightarrow 0,\quad
\textrm{in}\quad
L^1_{(x,\xi),loc}\quad\textrm{as}\quad\i\rightarrow 0.
\end{equation}

Therefore, the lemma follows from  \eqref{17}, \eqref{18},  and \eqref{110}, and we complete the
proof of this lemma.
\qed
%\end{proof}
\bigskip

Next, we turn to the proof of Lemma \ref{1L2}.

\subsection{Proof of Lemma \ref{1L2}}

%\begin{proof}[{\bf Proof of Lemma \ref{1L2}.}]
Let $f$ be a nonnegative solution as claimed in Theorem \ref{1T1}.
We introduce two cut-off functions, respectively, with respect to
each variable $x$ and $\xi$. For $m, n\in \bf{N}$, we denote them
by $$\psi_m(x)=\psi\left(\f{x}{m}\right), \quad\text{and}\quad \phi_n(\xi)=\phi\left(\f{\xi}{n}\right),$$
where $\psi\in\mathcal{D}(\R^3)$, $\psi\ge 0$, $\psi=1$ for
$|x|\le 1$ and $\psi=0$ for $|x|\ge 2$; and the analogous properties
are required on $\phi$ with respect to the variable $\xi$. We
first multiply \eqref{11} by $\phi_n$ and integrate over $\xi$
space to obtain
\begin{equation}\label{111}
\f{\partial}{\partial t}\int_{\R^3} f\phi_n
d\xi+\Dv_x\left(\int_{\R^3}\xi f\phi_n
d\xi\right)+\int_{\R^3}(E+\xi\times B)\cdot\nabla_\xi f\phi_n
d\xi=0.
\end{equation}

For the last term in \eqref{111}, we deduce, due to
$\Dv_\xi(E+\xi\times B)=0$,
\begin{equation*}
\begin{split}
\int_{\R^3}(E+\xi\times B)\cdot\nabla_\xi f\phi_n
d\xi&=-\int_{\R^3}f (E+\xi\times B)\cdot\nabla_\xi \phi_n d\xi\\
&=-\int_{\R^3}f\f{1+|\xi|}{n}\f{E+\xi\times
B}{1+|\xi|}\cdot\nabla_\xi\phi\left(\f{\xi}{n}\right)d\xi.
\end{split}
\end{equation*}
Now we multiply \eqref{111} by $\psi_m$ and integrate over $x$
space to deduce
\begin{equation}\label{112}
\begin{split}
&\f{d}{dt}\int_{\R^{6}}f\psi_m\phi_n
dxd\xi+\int_{\R^3}\psi_m\Dv_x\left(\int_{\R^3}\xi f\phi_n
d\xi\right)
dx\\&\quad=\int_{\R^3}\psi_m\int_{\R^3}f\f{1+|\xi|}{n}\f{E+\xi\times
B}{1+|\xi|}\cdot\nabla_\xi\phi\left(\f{\xi}{n}\right)d\xi dx.
\end{split}
\end{equation}
Hence, using the integration by parts for the second term in
\eqref{112}, we have
\begin{equation}\label{113}
\begin{split}
&\f{d}{dt}\int_{R^{2n}}f\psi_m\phi_n
dxd\xi-\int_{\R^3}\nabla_x\psi_m\cdot\left(\int_{\R^3}\xi f\phi_n
d\xi\right)
dx\\&\quad=\int_{\R^3}\psi_m\int_{\R^3}f\f{1+|\xi|}{n}\f{E+\xi\times
B}{1+|\xi|}\cdot\nabla_\xi\phi\left(\f{\xi}{n}\right)d\xi dx.
\end{split}
\end{equation}

Next, we proceed to control the  two integral terms in \eqref{113}. Indeed, for the second term in \eqref{113}, we have
\begin{equation}\label{114}
\begin{split}
\left|\int_{\R^3}\nabla_x\psi_m\cdot\left(\int_{\R^3}\xi f\phi_n
d\xi\right)
dx\right|&=\left|\int_{\R^3}\f{1}{m}\nabla_x\psi\left(\f{x}{m}\right)\cdot\left(\int_{\R^3}\xi
f\phi_n d\xi\right) dx\right|\\
&\le C\f{1}{m}\|\xi f\chi_{\{m\le|x|\le 2m, |\xi|\le 2n\}}
\|_{L^1_{(x,\xi)}}\rightarrow 0,
\end{split}
\end{equation}
as $m\rightarrow \infty$ and $n\rightarrow\infty$. Here we used
$\xi f\in L^\infty([0,T], L^1(\R^6))$, because
\begin{equation*}
\begin{split}
\int_{\R^3}\int_{\R^3}|\xi| f d\xi dx&\le
R\int_{\R^3}\int_{\R^3\cap \{|\xi|\le R\}}fd\xi
dx+\int_{\R^3}\int_{\R^3\cap\{|\xi|>R\}}|\xi|f d\xi dx\\
&\le R\int_{\R^3}\int_{\R^3\cap \{|\xi|\le R\}}fd\xi
dx+\f{1}{R}\int_{\R^3}\int_{\R^3\cap\{|\xi|>R\}}|\xi|^2f d\xi dx\\
&\le R\|f\|_{L^1(\R^6)}+\f{1}{R}\||\xi|^2 f\|_{L^1(\R^6)}\\
&\le 2\|f\|_{L^1(\R^6)}^{\f{1}{2}}\||\xi|^2
f\|_{L^1(\R^6)}^{\f{1}{2}},
\end{split}
\end{equation*}
by optimizing the value of $R$.

On the other hand, for $m$  fixed, we claim that the
term on the right-hand side of \eqref{113} goes to zero as $n$
goes to infinity by Lebesgue's dominated convergence theorem.
Indeed, as $\nabla\phi$ is $L^\infty$ and supported in the annular
$\{1\le |\xi|\le 2\}$, we have for almost all $x\in \R^3$,
\begin{equation*}
\begin{split}
&\left|\psi_m\int_{\R^3}f\f{1+|\xi|}{n}\f{E+\xi\times
B}{1+|\xi|}\cdot\nabla_\xi\phi\left(\f{\xi}{n}\right)d\xi\right|\\
&\quad\le \psi_m\int_{\R^3}f\f{1+|\xi|}{n}\f{|E+\xi\times
B|}{1+|\xi|}\left|\nabla_\xi\phi\left(\f{\xi}{n}\right)\right|d\xi\\
&\quad\le 2\|\nabla\phi\|_{L^\infty}\psi_m\|f\chi_{\{n\le|\xi|\le
2n\}}\|_{L^1_{\xi}}(|E|+|B|)\\
&\quad\rightarrow 0,
\end{split}
\end{equation*}
as $n\to\infty$, since for almost all $x\in\R^3$,
$f(x,\cdot)\in L^1(\R^3_\xi)$. In addition, by the Cauchy-Schwarz
inequality, we have,
\begin{equation*}
\begin{split}
&\left|\psi_m\int_{\R^3}f\f{1+|\xi|}{n}\f{E+\xi\times
B}{1+|\xi|}\cdot\nabla_\xi\phi\left(\f{\xi}{n}\right)d\xi\right|\\
&\quad\le \psi_m\int_{\R^3}f\f{1+|\xi|}{n}\f{|E+\xi\times
B|}{1+|\xi|}\left|\nabla_\xi\phi\left(\f{\xi}{n}\right)\right|d\xi\\
&\quad\le 2\|\nabla\phi\|_{L^\infty}\|f\|_{L^1_{\xi}}(|E|+|B|)\\
&\quad\le
4\|\nabla\phi\|_{L^\infty}(\|f\|^2_{L^1_{\xi}}+|E|^2+|B|^2).
\end{split}
\end{equation*}
and the right-hand side is in $L^1_x$, since $f\in
L^2_x(L^1_{\xi})$ and $E, B\in L^2_{x}$. Thus, Lebesgue's theorem
applies and we get the convergence of the term on the right-hand
side of \eqref{113} to zero as $n$ goes to infinity, and $m$ being
kept fixed.

Collecting the behaviors of those two terms, we obtain with
\eqref{113}, as $n$, and next $m$, go to infinity,
$$\f{d}{dt}\int_{\R^{6}}f dxd\xi=0.$$
As $f_0=0$, this yields $f=0$ for all $t$ since $f\ge 0$ and this
concludes the proof.
\qed
%\end{proof}
\bigskip

Having proved Lemma \ref{1L1} and Lemma \ref{1L2}, we are now
ready to complete the proof of Theorem \ref{1T1} as follows.

\subsection{Proof of Theorem \ref{1T1}}

%\begin{proof}[{\bf Proof of Theorem \ref{1T1}}]
Assume for the
time being that we have at hand two solutions $f_1$ and $f_2$ to
\eqref{11} satisfying the regularity stated in Theorem \ref{1T1},
and sharing the same initial value. In view of the interpolation
between $L^1$ and $L^\infty$, and the fact $$f_i\in
L^\infty([0,T], L^1_{x,\xi}\cap L^\infty_{x,\xi}(\R^3))\cap
L^\infty\Big([0,T], L^\infty_x\big(\R^3,
L^1_{\xi}(\R^3)\big)\Big)$$ for $i=1,2$, we deduce that
$$f_i\in L^\infty\Big([0,T],
L^2_x\big(\R^3, L^1_{\xi}(\R^3)\big)\Big).$$  By virtue of Lemma
\ref{1L1}, their difference ${\tt f}=f_1-f_2$ satisfies
\begin{equation}\label{115}
\f{\partial {\tt f}_{\mu(\varepsilon), \varepsilon}}{\partial
t}+\mathcal{B}\cdot\nabla_{(x,\xi)}{\tt f}_{\mu(\varepsilon),\varepsilon}=\ov{\mathcal{A}}_\varepsilon,
\end{equation}
with the same notation as in Lemma \ref{1L1}. Since
${\tt f}_{\mu(\varepsilon),\varepsilon}\in C^\infty(\R^{6})$, we
multiply \eqref{115} by $\beta'({\tt f}_{\mu(\varepsilon),\varepsilon})$
for some function $\beta\in C^1(\R)$ with $\beta'$ bounded, and
obtain
$$\f{\partial \beta({\tt f}_{\mu(\varepsilon), \varepsilon})}{\partial
t}+\mathcal{B}\cdot\nabla_{(x,\xi)}
\beta({\tt f}_{\mu(\varepsilon),\varepsilon})=\ov{\mathcal{A}}_\varepsilon\beta'({\tt f}_{\mu(\varepsilon),\varepsilon}).$$
By letting $\varepsilon$ go to zero, we obtain the equation
$$\f{\partial\beta({\tt f})}{\partial t}+\mathcal{B}\cdot\nabla_{(x,\xi)}\beta({\tt f})=0,$$ in
$L^\infty([0,T]; L^1\cap L^\infty_{(x,\xi),loc})$ for such
functions $\beta$. Now, letting $\beta$ approximate the absolute
value function, we end up with
\begin{equation*}
\f{\partial |{\tt f}|}{\partial t}+\mathcal{B}\cdot\nabla_{(x,\xi)}|{\tt f}|=0.
\end{equation*}
This implies that we have a nonnegative solution $|{\tt f}|$ to
\eqref{11}, which vanishes at initial time and belongs to the
functional space stated in Lemma \ref{1L2}. Applying Lemma
\ref{1L2}, we get $|{\tt f}|=0$, that is, $f_1=f_2$. There remains now
to prove the existence part.

Existence in the functional space $L^\infty\big([0,T];
L^1_{(x,\xi)}\cap L^\infty_{(x,\xi)}(\R^{6})\big)$ is given in a
straight forward way by an application of Proposition 2.1 of
\cite{DL}. For the sake of consistency, let us only mention here
that it is a simple matter of regularization of the vector field
$\mathcal{B}$ appearing in \eqref{11}. That is, one introduces the solution
$f_\alpha$ to
$$\f{\partial f_\alpha}{\partial t}+\mathcal{B}_\alpha\cdot\nabla
f_\alpha=0,\quad \textrm{in}\quad (0,\infty)\times \R^{6},$$ where
$\mathcal{B}_\alpha=\kappa_\alpha*\mathcal{B}\in L^1([0,T];
C^\infty(\R^{6}))$ converges to $\mathcal{B}$, then shows the
desired  estimates on $f_\alpha$,  and finally passes to the limit.

Next, the non-standard part we have to prove here is the fact
that such a solution necessarily satisfies $|\xi|^2 f\in
L^\infty([0,T], L^1(\R^{6})$. This is actually a consequence of
the specific form of the transport equation and of the
regularization process we have already done. Indeed, first, by the method of
characteristics, we know if $f_0\ge 0$ $a.e$ in $\R^{6}$, then
$f(t)\ge 0$ $a.e$ in $\R^{6}$ for all $t\ge0$. Then, formally we
multiply \eqref{11} by $|\xi|^2$ to obtain
$$\f{\partial (|\xi|^2 f)}{\partial t}+|\xi|^2 \xi\cdot\nabla_x
f+|\xi|^2(E+\xi\times B)\cdot\nabla_\xi f=0.$$ Then we integrate
the above identity over $\xi$ on $\R^3$ to deduce
\begin{equation}\label{116}
\f{d}{dt}\int_{\R^6}|\xi|^2
fdxd\xi=\int_{\R^{6}}\Dv_\xi(|\xi|^2(E+\xi\times B)) fdxd\xi,
\end{equation}
since $$\int_{\R^{6}}|\xi|^2 \xi\cdot\nabla_x f
dxd\xi=-\int_{\R^{6}}\Dv_x(|\xi|^2 \xi) f dxd\xi=0.$$ For the term
on the right-hand side of \eqref{116}, we have
$$\int_{\R^{6}}\Dv_\xi(|\xi|^2(E+\xi\times B)) fdxd\xi=2\int_{\R^{6}}\left(\xi\cdot Ef\right)dxd\xi,$$
since
$$\Dv_\xi(|\xi|^2\xi\times B)=2\xi\cdot(\xi\times
B)+|\xi|^2\Dv_\xi(\xi\times B)=0.$$
Also, notice that, for $a.e$ $x\in \R^3$,
\begin{equation}\label{1116}
\begin{split}
\int_{\R^3}|\xi|fd\xi&\le\int_{\{|\xi|\le
R\}}Rfd\xi+\int_{\{|\xi|>R\}}|\xi|fd\xi\\
&\le\omega_3
R^4\|f\|_{L^\infty(\R^6)}+R^{-1}\int_{\{|\xi|>R\}}|\xi|^2 fd\xi\\
&\le C\left(\int_{\R^3}|\xi|^2fd\xi\right)^{\f{4}{5}},
\end{split}
\end{equation}
where $\omega_3$ is the volume of the unit ball in $\R^3$,  and in
the last inequality $R$ is taken to be
$$R=\left(\int_{\R^3}|\xi|^2fd\xi\right)^{\f{1}{5}}.$$
Hence, we have the following estimate, by the H\"{o}lder inequality,
\begin{equation*}
\begin{split}
\left|\int_{\R^{6}}\Dv_\xi(|\xi|^2(E+\xi\times B))
fdxd\xi\right|&=2\left|\int_{\R^{6}}\left(\xi\cdot
Ef\right)dxd\xi\right|\\
&\le 2\int_{\R^3}\int_{\R^3}|\xi|f|E|dxd\xi\\
&\le C\int_{\R^3}\left(\int_{\R^3}|\xi|^2fd\xi\right)^{\f{4}{5}}|E|dx\\
&\le
C\left(\int_{\R^{6}}|\xi|^2fdxd\xi\right)^{\f{4}{5}}\|E\|_{L^\infty([0,T],
L^5(\R^3))}.
\end{split}
\end{equation*}
Substituting this back to \eqref{116}, we obtain
$$\f{d}{dt}\int_{\R^6}|\xi|^2
fdxd\xi\le
C\left(\int_{\R^{6}}|\xi|^2fdxd\xi\right)^{\f{4}{5}}\|E\|_{L^\infty([0,T],
L^5(\R^3))}.$$ This implies
$$\int_{\R^6}|\xi|^2
f(t)dxd\xi\le \int_{\R^6}|\xi|^2 f_0dxd\xi+CT^5,$$ for all $t\in
[0,T].$

Finally, we show that the solution $f$ necessarily belongs to
$L^\infty([0,T]; L^\infty_x(\R^3, L^1_\xi(\R^3)))$ if $f_0\in
L^\infty_x(\R^3, L^1_\xi(\R^3))$. This is actually also a consequence
of the specific form of the transport equation and of the
regularization process as mentioned earlier. Indeed, we mollify $E+\xi\cdot B$ by
$\kappa_\alpha$ to obtain,
\begin{equation}\label{fe1}
\f{\partial f_{\alpha}}{\partial t}+\xi\cdot\nabla_x
f_{\alpha}+(E+\xi\times B)_\alpha\cdot\nabla_\xi f_{\alpha}=0.
\end{equation}
Integrating \eqref{fe1} over $\xi$ in $\R^3$, one has, thanks to
the fact that $\Dv_\xi(E+\xi\times B)_\alpha=0$,
\begin{equation}\label{fe2}
\f{\partial}{\partial
t}\int_{\R^3}f_{\alpha}d\xi+\xi\cdot\nabla_x\int_{\R^3}
f_{\alpha}d\xi=0.
\end{equation}
That is equivalent to saying that $\int_{\R^3} f_{\alpha}d\xi$
satisfies a conservation form, which yields the conservation
over time of
$$\left\|\int_{\R^3}f_\alpha d\xi\right\|_{L^\infty_x}.$$
Hence, $f_\alpha \in L^\infty(0,T; L^\infty_x(\R^3,
L^1(\R^3_\xi)))$. By letting $\alpha\rightarrow 0$, one obtain $$f
\in L^\infty(0,T; L^\infty_x(\R^3, L^1(\R^3_\xi))).$$

The proof of Theorem \ref{1T1} of complete.
 \qed
%\end{proof}

\begin{Remark}
The assumption $E\in L^\infty(0,T; L^5(\R^3))$ is only needed to
show the uniform estimate $|\xi|^2f\in L^\infty(0,T; L^1(\R^6))$.
\end{Remark}

We now turn to the extension of the previous result to
less regular initial data through the notion of renormalized
solutions in the spirit of \cite{DL}. As in \cite{DL}, we
consider the set $L^0$ of all measurable functions $f$ on $\R^6$
with value in $\ov{\R}$ such that $meas\{|f|>\lambda\}<\infty,$
for all $\lambda>0$. For any $\beta\in C_{0,0}(\R)$, bounded and
vanishing near zero, we thus have $\beta(f)\in L^1\cap
L^\infty(\R^6)$ for any $f\in L^0$. As in \cite{DL}, we shall say
that a sequence $f_n$ is bounded (respectively, converges) in
$L^0$ whenever $\beta(f_n)$ is bounded (respectively, converges)
in $L^1$ for any such $\beta$. But now we need some additional
assumptions on our initial data, and that is why we consider the
subset $L^{00}$ of $L^0$ consisting of functions $f$ satisfying
$$\int_{\{|f(x,\xi)|>\dl\}}|\xi|^2 dxd\xi\le c_{\dl}<\infty, \qquad \forall\,\dl>0 .$$ This subset is equipped with the topology
induced by that of $L^0$. For any $f\in L^{00}$, we have
$|\xi|^2\beta(f)\in L^1_{(x,\xi)}(\R^6)$. Indeed, for $\dl$ small
enough such that $\beta$ vanishes on $[0,\dl]$, we have
\begin{equation*}
\begin{split}
\int_{\R^6}|\xi|^2|\beta(f(x,\xi))|dxd\xi&=\int_{\{
|f(x,\xi)|>\dl\}}|\xi|^2|\beta(f(x,\xi))|dxd\xi\\
&\quad+\int_{\{|f(x,\xi)|\le\dl\}}|\xi|^2|\beta(f(x,\xi))|dxd\xi\\
&\le \|\beta\|_{L^\infty}c_{\dl}+0<\infty.
\end{split}
\end{equation*}

It follows that if we choose $f_0$ in $L^{00}$, then $\beta(f_0)$
is a convenient initial condition for the transport equation
considered in Theorem \ref{1T1}. We therefore say that $f$ is a
renormalized solution of \eqref{11} complemented by an initial
condition $f_0\in L^{00}$ whenever $\beta(f)$ is a solution of
\eqref{11} in the sense of Theorem \ref{1T1} with the initial
condition $\beta(f_0)$.

\bigskip

\section{Stability of Vlasov-Maxwell-Boltzmann Equations: Main Results}

Let us begin by recalling that the general Vlasov-Maxwell-Boltzmann equations \eqref{21}
\iffalse%%%%
\begin{subequations}\label{21}
\begin{align}
&\f{\partial f}{\partial t}+\xi\cdot\nabla_x f+(E+\xi\times
B)\cdot\nabla_\xi f=Q(f,f),\quad x\in \R^3,\quad \xi\in \R^3,\quad
t\ge 0,\label{e211}\\
&\f{\partial E}{\partial t}-\nabla\times B=-j, \quad \Dv B=0,\quad
\textrm{on}\quad \R^3_x\times(0,\infty),\label{e212}\\
&\f{\partial B}{\partial t}+\nabla\times E=0, \quad\Dv E=\r,\quad
\textrm{on}\quad \R^3_x\times(0,\infty),\label{e213}\\
&\r=\int_{\R^3}fd\xi, \quad j_k=\int_{\R^3}f\xi_k
d\xi,\quad\textrm{for}\quad 1\le k\le3,\quad \textrm{on}\quad
\R^3_x\times(0,\infty)\label{e214}.
\end{align}
\end{subequations}
\fi%%%%%%%
has the collison operator $Q(f,f)$ which can be written as
$$Q(f,f)=Q^+(f,f)-Q^-(f,f),$$
where
$$Q^+(f,f)=\int_{\R^3}d\xi^*\int_{S^2}d\omega b(\xi-\xi_*,\omega)f'f'_*,$$
and
$$Q^-(f,f)=\int_{\R^3}d\xi_*\int_{S^2}d\omega
b(\xi-\xi_*,\omega)ff_*=fL(f),$$
with
$$L(f)=A*_{\xi}f,\qquad A(z)=\int_{S^2}b(z,\omega)d\omega,\quad z\in\R^3.$$

The  collision kernel $b$ in the collision operator $Q$ is
a given function on $\R^3\times S^2$. We shall always assume  the so-called {\textit{angular cut-off}} kernel throughout
the rest of the paper, that is, $b$ satisfies
$$b\in L^1(B_R\times S^2)\quad \textrm{for all}\quad R\in
(0,\infty),\qquad b\geq 0$$ where $B_R=\{z\in \R^3: |z|<R\}$, and
\begin{equation*}
\begin{cases}
b(z,w)\quad \textrm{depends only on}\quad |z|\quad
\textrm{and}\quad |(z,\omega)|,\\
(1+|z|^2)^{-1}\left(\int_{z+B_R}A(\xi)d\xi\right)\rightarrow
0,\quad \textrm{as}\quad |z|\rightarrow\infty,\quad \textrm{for
all}\quad R\in(0,\infty).
\end{cases}
\end{equation*}
A classical example of such angular cut-off collision kernels is given by the so-called hard-spheres model
where we have
$$b(z,\omega)=|(z,\omega)|.$$

The VMB system \eqref{21} is complemented with the initial conditions
\begin{equation}\label{VMBic}
\begin{cases}
f|_{t=0}=f_0,\quad\textrm{on}\quad \R^6,\quad\textrm{with}\quad f_0\ge 0,\\
E|_{t=0}=E_0,\quad B|_{t=0}=B_0\quad\textrm{on} \quad\R^3_x,
\end{cases}
\end{equation}
with the usual compatibility condition
$$\Dv B_0=0,\qquad\textrm{and}\qquad\Dv E_0=\r_0=\int_{\R^3}f_0d\xi,\quad\textrm{on}\quad \R^3_x.$$

We state below our main stability results concerning the Cauchy problem of the
Vlasov-Maxwell-Boltzmann system \eqref{21} and \eqref{VMBic}. We assume that $f_0$ satisfies
\begin{equation}\label{IC}
\int_{\R^6}f_0(1+\nu+|\xi|^2+|\log
f_0|)dxd\xi+\int_{\R^3}(|E_0|^2+|B_0|^2)dx<\infty,
\end{equation}
where $\nu=\nu(x)$ is some function in $\R^3$ satisfying
$$\nu\ge 0,\quad (1+\nu)^{\f{1}{2}}\quad\textrm{is Lipschitz
on}\quad\R^3,\quad e^{-\nu}\in L^1(\R^3).$$

Using the classical identity (see Lemma 2.1 in \cite{FFM}),
\begin{equation}\label{210}
\int_{\R^3}Q(f,f)\zeta(\xi)d\xi=\f{1}{4}\int_{\R^6}dxd\xi_*\int_{S^2}d\omega
b(f'f'_*-ff_*)\left(\zeta+\zeta_*-\zeta'-\zeta'_*\right),
\end{equation}
we deduce the following local conservation laws of mass, momentum and
kinetic energy:
\begin{equation}\label{22}
\f{\partial \r}{\partial t}+\Dv_xj=0,
\end{equation}
\begin{equation}\label{23}
\begin{split}
&\f{\partial }{\partial t}\left(\int_{\R^3}f\xi d\xi+E\times B\right)\\
&\qquad +\Dv_x\left(\int_{\R^3}\xi\otimes\xi
fd\xi+\left(\f{|E|^2+|B|^2}{2}Id-E\otimes E-B\otimes B\right)
\right)=0,
\end{split}
\end{equation}
\begin{equation}\label{24}
\f{\partial}{\partial
t}\left(\int_{\R^3}f|\xi|^2d\xi\right)+\Dv_x\left(\int_{\R^3}\xi|\xi|^2
fd\xi\right)-2E\cdot\int_{\R^3}\xi fd\xi=0,
\end{equation}
for $(x,t)\in\R^3\times(0,\infty)$.
In fact, while \eqref{22} and \eqref{24} are easy to verify, we
need to pay more attention to \eqref{23}. To verify \eqref{23}, we
first multiply \eqref{e211} by $\xi$ and integrate with respect to
$\xi$ to obtain
\begin{equation}\label{2101}
\f{\partial}{\partial t}\int_{\R^3}f\xi
d\xi+\Dv_x\int_{\R^3}\xi\otimes\xi fd\xi=-\left(\r E+j\times
B\right).
\end{equation}
Note that
$$E\Dv E+(\nabla\times E)\times E=\Dv(E\otimes E)-\f{1}{2}\nabla|E|^2.$$
Thus it yields the following, combined
with \eqref{e212} and \eqref{e213},
\begin{equation}\label{2100}
\f{\partial}{\partial t}(E\times
B)+\Dv_x\left(\f{|E|^2+|B|^2}{2}Id-E\otimes E-B\otimes
B\right)=-(\r E+j\times B).
\end{equation}
Then, adding \eqref{2101} and \eqref{2100} together gives
\eqref{23}. Integrating \eqref{22}-\eqref{24} in $x$
over $\R^3$, we deduce the following global conservation of mass,
momentum and total energy
\begin{equation}\label{25}
\f{d }{d t}\int_{\R^6}fdxd\xi=0,\quad\textrm{for}\quad t\ge 0,
\end{equation}
\begin{equation}\label{26}
\f{d}{d t}\left(\int_{\R^6}f\xi dxd\xi+\int_{\R^3}E\times B
dx\right) =0,
\end{equation}
\begin{equation}\label{27}
\f{d}{d
t}\int_{\R^6}f|\xi|^2dxd\xi-2\int_{\R^3}E\cdot\int_{\R^3}\xi fd\xi
dx=0,\quad\textrm{for}\quad t\ge 0.
\end{equation}
On the other hand, multiplying \eqref{e212} by $E$, multiplying
\eqref{e213} by $B$, integrating them in $x$ over $\R^3$,  and then
summing them together, we obtain
$$\f{d}{dt}\int_{\R^3}(|E|^2+|B|^2)dx=-2\int_{\R^3}E\cdot j dx.$$
Substituting the above identity back to \eqref{27}, one obtains
\begin{equation}\label{28}
\f{d}{d
t}\left(\int_{\R^6}f|\xi|^2dxd\xi+\int_{\R^3}(|E|^2+|B|^2)dx\right)=0,\quad\textrm{for}\quad
t\ge 0.
\end{equation}

Therefore, if we assume that the initial condition $f_0$ as
\eqref{IC}, we deduce from \eqref{25}, \eqref{26} and \eqref{28}
that
\begin{equation}\label{29}
\sup_{t\in
[0,T]}\int_{\R^6}f(1+\nu+|\xi|^2)dxd\xi+\int_{\R^3}(|E|^2+|B|^2)dx\le
C(T)
\end{equation}
for some nonnegative constant $C(T)$ that depends only on $T$ and
on the initial data. Indeed, we observe that we have, multiplying
\eqref{e211} by $\nu(x)$ and then integrating over $\xi$,
\begin{equation*}
\begin{split}
\f{\partial}{\partial t
}\left(\int_{\R^3}f\nu(x)d\xi\right)+\Dv_x\left(\int_{\R^3}f\nu(x)\xi
d\xi\right)&=\int_{\R^3}f\xi\cdot\nabla_x\nu(x)d\xi\\
&\le\f{1}{2}\int_{\R^3}f|\xi|^2d\xi+\f{1}{2}\r(t,x)|\nabla\nu|^2\\
&\le
\f{1}{2}\int_{\R^3}f|\xi|^2d\xi+C\int_{\R^3}fd\xi+C\int_{\R^3}f\nu
d\xi,
\end{split}
\end{equation*}
since $(1+\nu)^{\f{1}{2}}$ is Lipschitz. In particular, we deduce
$$\f{d}{dt}\int_{\R^6}f\nu(x)dxd\xi\le
C+\f{1}{2}\int_{\R^6}f(|\xi|^2+\nu(x))dxd\xi.$$ Then \eqref{29}
follows from the above inequality and Gr\"{o}nwall's inequality.

The final formal bound we wish to obtain is deduced from the
entropy identity. Multiplying \eqref{e211} by $\log f$, using
\eqref{210}, we obtain, at least formally,
\begin{equation}\label{212}
\begin{split}
\f{d}{dt}\int_{\R^6}f\log
fdxd\xi+\f{1}{4}\int_{\R^3}dx\int_{\R^6}d\xi d\xi_*\int_{S^2}
B(f'f'_*-ff_*)\log\f{f'f'_*}{ff_*}=0.
\end{split}
\end{equation}
Since the second term is clearly nonnegative, we deduce in
particular that
\begin{equation}\label{211}
\sup_{t\ge 0}\int_{\R^6}f\log fdxd\xi\le \int_{\R^6}f_0\log
f_0dxd\xi.
\end{equation}
This inequality together with a lemma in \cite{DL2} implies
$$\sup_{t\in [0,T]}\int_{\R^6}f|\log f|\le C(T).$$
Also, if we go back to \eqref{212}, we   deduce that,
$$\int_0^Tdt\int_{\R^3}dx\int_{\R^6}d\xi d\xi_*\int_{S^2}
b(f'f'_*-ff_*)\log\f{f'f'_*}{ff_*}\le C(T).$$ In conclusion, we
obtain the following bounds:
\begin{equation}\label{ee}
%\begin{cases}
\begin{split}
&\sup_{t\in [0,T]}\left(\int_{\R^6}f(1+|\xi|^2+\nu(x)+|\log
f|)dxd\xi+\int_{\R^3}(|E|^2+|B|^2)dx\right)\le C(T); \\
&\int_0^T\int_{\R^3}dx\int_{\R^6}d\xi d\xi_*\int_{S^2}d\omega
b(f'f'_*-ff_*)\log \f{f'f'_*}{ff_*}\le C(T).
\end{split}
%\end{cases}
\end{equation}
\bigskip

Now we give the definition of $\textit{Renormalized Solutions}$ to
VMB.
\begin{Definition}
A triple $(f(t, x, \xi), E(t, x), B(t, x))$ with $f\ge 0$ is said
to be a renormalized solution to VMB \eqref{21} if for all $T\in (0,\infty)$,
we have
\begin{itemize}
\item $f(t,x,\xi)\in
C([0,T]; L^1(\R^6))$, $E, B\in C([0,T]; L^2(\R^3))$ and
\eqref{ee} holds;\\
\item for any $\beta\in C^1([0,\infty))$ satisfying that
$\beta(0)=0$ and $\beta'(t)(1+t)$ is bounded in $[0,\infty)$,
\begin{equation}\label{df}
\f{\partial}{\partial
t}\beta(f)+\xi\cdot\nabla_x\beta(f)+(E+\xi\times B)\cdot\nabla_\xi
\beta(f)=\beta'(f)Q(f,f)
\end{equation}
holds in $\mathcal{D}'$ (sense of distributions); and\\
\item \eqref{e212} and \eqref{e213} hold in $\mathcal{D}'$.
\end{itemize}
\end{Definition}

%%%%%%

One of the main objectives in the rest of this paper is devoted to
the stability of renormalized solutions to VMB. More precisely, we
consider a sequence of  initial data $\{(f_0^n, E_0^n, B_0^n)\}_{n=1}^\infty$ satisfying \eqref{IC} with $f_0^n\ge
0$, $a.e.$ in $\R^6$ and converging to $(f_0, E_0, B_0)$. Then, corresponding to those initial
conditions, we suppose that there is a sequence of renormalized
solutions $\{(f^n, E^n, B^n)\}_{n=1}^\infty$ to VMB satisfying
\eqref{ee}. Without loss of generality, we may assume that $(f^n,
E^n, B^n)$ converges weakly to $(f, E, B)$. We will prove that
$(f, E, B)$ is still a renormalized solution to VMB with the
initial data $(f_0, E_0, B_0)$.

\begin{Theorem}[Weak Stability] \label{2T1}
Suppose that $\{(f^n, E^n, B^n)\}_{n=1}^\infty$ is a sequence of renormalized
solutions to VMB \eqref{21} satisfying \eqref{ee},
with initial data $\{(f_0^n, E_0^n, B_0^n)\}_{n=1}^\infty$ satisfying \eqref{IC},  $f_0^n\ge 0$, $a.e.$ in $\R^6$
 and converging weakly to $(f_0, E_0, B_0)$ in $L^1(\R^6)\times\left(L^2(\R^3)\right)^6$; and $(f, E, B)$ is a weak-$\ast$ limit of $\{(f^n, E^n, B^n)\}$ in $L^\infty(0,T;L^1(\R^6))\times\left(L^\infty(0,T;\left(L^2(\R^3)\right)^6\right)$.
Then the sequence $\{f_n\}$ satisfies:
\begin{enumerate}
\item For all $\psi\in C(\R^3)$ such that $\f{|\psi(\xi)|}{1+|\xi|^2}\rightarrow 0$ as $|\xi|\rightarrow \infty$, $\int_{\R^3}f^n\phi d\xi$ converges to
$\int_{\R^3}f\psi d\xi$ in $L^p([0,T], L^1_{loc}(\R^3))$ for all
$1\le p<\infty$.
\item $L(f^n)$ converges to
$L(f)$ in $L^p([0,T]; L^1(\R^3_x\times K))$ for all $1\le
p<\infty$, $T\in (0,\infty)$, $K$ compact set in $\R^3_\xi$.
\item For all $\phi\in L^\infty(\R^3)$ with compact support,
$\int_{\R^3}Q^{\pm}(f^n,f^n)\phi d\xi$ converges locally in
measure to $\int_{\R^3}Q^{\pm}(f,f)\phi d\xi$. And
$Q^{\pm}(f^n,f^n)(1+f^n)^{-1}$ are relatively weakly compact in
$L^1(\R_x^3\times K\times (0,T))$ for all $T\in(0,\infty)$,  compact set $K$ in $\R_{\xi}^3$.
\item $Q^+(f^n,f^n)$ converges
locally in measure to $Q^+(f,f)$.
\end{enumerate}
Moreover, if
$$\|E^n\|_{L^\infty(0,T; L^5(\R^3))}\quad \textrm{is uniformly
bounded},$$ and
$$\|B^n\|_{L^\infty(0,T; L^s(\R^3))}\quad \textrm{is uniformly
bounded for some }s>5,$$ then the weak limit $(f, E, B)$ is a
renormalized solution of \eqref{21} with the initial data $(f_0, E_0, B_0)$.
\end{Theorem}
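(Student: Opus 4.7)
The plan is to pass to the limit as $n\to\infty$ in the renormalized equation
$$\f{\partial}{\partial t}\beta(f^n)+\xi\cdot\nabla_x\beta(f^n)+(E^n+\xi\times B^n)\cdot\nabla_\xi\beta(f^n)=\beta'(f^n)Q(f^n,f^n),$$
for a one-parameter family of renormalizers such as $\beta_{\dl}(t)=\dl^{-1}\log(1+\dl t)$, together with the Maxwell pair \eqref{e212}--\eqref{e213}. Fatou and weak lower semicontinuity preserve the \textit{a priori} bounds \eqref{ee} under weak-$\ast$ limits, so the limit triple $(f,E,B)$ inherits them; the Maxwell system is linear and passes to the limit in $\mathcal{D}'$ once the current convergence $j^n\to j$ from (1) is in hand. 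The genuine difficulties concentrate in the Lorentz and collision nonlinearities.

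Assertions (1)--(4) form the compactness package of DiPerna-Lions \cite{DL4} and Lions \cite{DL2, DL3} adapted to the present setting. For (1), I would apply the velocity averaging lemma of DiPerna-Lions-Meyer \cite{DLM} to $g^n=\beta_{\dl}(f^n)$: rewriting the renormalized equation in the form
$$(\partial_t+\xi\cdot\nabla_x)g^n=\beta_{\dl}'(f^n)Q(f^n,f^n)-\Dv_\xi\bigl((E^n+\xi\times B^n)g^n\bigr),$$
the entropy dissipation bound in \eqref{ee} controls $Q^{\pm}(f^n,f^n)/(1+f^n)$ in $L^1$ on compact $\xi$-sets, and the second term is in divergence form in $\xi$ with $L^1$ coefficients. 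The averaging lemma then gives strong $L^p_{loc}$ compactness of $\int g^n\psi\,d\xi$ for $\psi$ compactly supported in $\xi$; interpolating against the uniform bound $|\xi|^2 f^n\in L^\infty(0,T;L^1(\R^6))$ removes the compact support restriction and yields (1) for the full growth class $|\psi(\xi)|/(1+|\xi|^2)\to 0$. Assertion (2) is immediate since $L(f^n)=A*_\xi f^n$ on compact $\xi$-sets is itself a truncated velocity average. For (3), the equi-integrability of $Q^{\pm}(f^n,f^n)/(1+f^n)$ in $L^1$ on the stated sets follows from the entropy-dissipation bound in \eqref{ee} by the classical DiPerna-Lions estimate, and Dunford-Pettis provides the weak compactness. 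For (4), I would invoke the smoothing property of the gain operator $Q^+$ in the $\xi$-variable (Lions \cite{DL2, DL3}), which combined with (1) yields convergence in measure of $Q^+(f^n,f^n)$; then $Q^-(f^n,f^n)=f^n L(f^n)$ converges in measure via (1) and (2).

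The main obstacle is passing to the limit in the Lorentz term $\Dv_\xi\bigl((E^n+\xi\times B^n)\beta(f^n)\bigr)$. Testing against $\varphi\in\mathcal{D}([0,T]\times\R^6)$ and integrating by parts in $\xi$ reduces the question to the convergence of expressions of the form
$$\int_0^T\!\!\int_{\R^3}E^n(t,x)\cdot\left(\int_{\R^3}\beta(f^n)\,\Phi(\xi)\,d\xi\right)dx\,dt,$$
with $\Phi$ compactly supported in $\xi$, together with the analogous $B^n$-expression. By (1) the inner velocity average converges strongly in $L^p([0,T];L^1_{loc}(\R^3_x))$ for every $1\le p<\infty$; interpolating against the uniform bounds $\rho^n\in L^\infty(0,T;L^{5/3})$ and $j^n\in L^\infty(0,T;L^{5/4})$, obtained from the mass and kinetic-energy bounds by optimizing as in \eqref{1116}, upgrades the strong convergence to $L^p(0,T;L^p_{loc}(\R^3_x))$ for every $p<5/4$. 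The exponent $5/4$ is dual to $5$, so the uniform bound $\|E^n\|_{L^\infty(0,T;L^5)}\le C$ makes the product converge by strong-times-weak-$\ast$ convergence; the strict inequality $s>5$ in the $B^n$ bound supplies the small margin of integrability needed to absorb the factor $|\xi|$ appearing in $\xi\times B^n$ via a parallel H\"older argument. Combining this with (1)--(4) and the weak convergence of the Maxwell system, the limit $(f,E,B)$ satisfies \eqref{df} and \eqref{e212}--\eqref{e213} in $\mathcal{D}'$; time continuity and attainment of the initial data $(f_0,E_0,B_0)$ then follow from the equations together with the weak convergence of the initial data, identifying $(f,E,B)$ as a renormalized solution of \eqref{21}.
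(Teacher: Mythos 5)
Your treatment of statements (1)--(4) and of the Lorentz-term convergence is sound and parallels the paper: velocity averaging together with the entropy-dissipation bounds yields (1), then (2)--(4) follow from DiPerna--Lions--Lions machinery; the moment bounds $\rho^n\in L^\infty(0,T;L^{5/3})$, $j^n\in L^\infty(0,T;L^{5/4})$ obtained by optimizing against the kinetic energy, together with the hypotheses $E^n$ bounded in $L^\infty L^5$ and $B^n$ bounded in $L^\infty L^s$ with $s>5$, are exactly what the paper uses to pass to the limit in $\Dv_\xi\bigl((E^n+\xi\times B^n)\beta_\dl(f^n)\bigr)$.

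The gap is at the very end, where you write that the limit ``satisfies \eqref{df}'' after passing to the limit in the equation for $\beta_\dl(f^n)$. Since $\beta_\dl$ is nonlinear and you only have weak convergence of $f^n$, the weak limit of $\beta_\dl(f^n)$ (call it $\beta_\dl$, following the paper) need \emph{not} equal $\beta_\dl(f)$; the same obstruction appears for $\beta_\dl'(f^n)Q^\pm(f^n,f^n)$, whose weak limits $Q_\dl^\pm$ are not a priori $\beta_\dl'(f)Q^\pm(f,f)$. After taking $n\to\infty$ one therefore only obtains
\begin{equation*}
\partial_t\beta_\dl+\xi\cdot\nabla_x\beta_\dl+(E+\xi\times B)\cdot\nabla_\xi\beta_\dl=Q_\dl^+-Q_\dl^-,
\end{equation*}
for the abstract weak limit $\beta_\dl$, and one must still (i) renormalize \emph{this} equation with $\beta(t)=\ln(1+t)$, (ii) show $\beta_\dl\to f$ in $C([0,T];L^1)$ as $\dl\to0$, and (iii) identify the limits $(1+\beta_\dl)^{-1}Q_\dl^\pm\to(1+f)^{-1}Q^\pm(f,f)$ a.e.\ while retaining weak $L^1$ compactness. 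Step (i) is precisely where the paper must invoke Theorem~\ref{1T1}: the field $(\xi,E+\xi\times B)$ is only partially differentiable, so the ability to renormalize the limiting Vlasov equation is \emph{not} a corollary of DiPerna--Lions $W^{1,1}$ theory --- it is the purpose of Section~2 of the paper and the condition $\int_{\{\beta_\dl>\sigma\}}|\xi|^2\,dx\,d\xi<\infty$ must be checked before that theorem applies. Step (iii) requires the convexity/concavity bookkeeping for $t/(1+\dl t)$ and $t/(1+\dl t)^2$ that gives $Q^-_\dl=g_\dl L(f)$ and $Q^+_\dl=h_\dl Q^+(f,f)$, together with the truncation argument (introducing the measure $\tilde e$ and the weak limit $f_R$ of $f^n\chi_{\{f^n>R\}}$) to sandwich $Q^+_\dl$ between $Q^+(f,f)$ and something that tends to it. None of this appears in your proposal, and without it the claim that the weak limit is a renormalized solution is unjustified. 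In short, you correctly reduce the problem to a limit passage, but the passage from the equation for the weak limit of $\beta_\dl(f^n)$ to the equation for $\ln(1+f)$ is the heart of Step~Two and needs the renormalized-Vlasov theory plus the $Q_\dl^\pm$ identification to close.
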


\begin{Remark}\label{R1}
Due to the convexity of $x\ln x$ and the monotonicity of
$(x-y)\ln\f{x}{y}$ for all $x,y>0$, we can show, as in \cite{DL5},
$$\int_{\R^6}f(t)\ln f(t)dxd\xi\le\liminf_{n\rightarrow
\infty}\int_{\R^6}f^n(t)\ln f^n(t)dxd\xi,$$ and
\begin{equation*}
\begin{split}
&\int_0^t\int_{\R^3}\int_{\R^6}d\xi d\xi_*\int_{S^2}d\omega
b(f'f'_*-ff_*)\ln \f{f'f'_*}{ff_*}\\
&\quad\le \liminf_{n\rightarrow
\infty}\int_0^t\int_{\R^3}\int_{\R^6}d\xi d\xi_*\int_{S^2}d\omega
b({f^n}'{f^n}'_*-f^nf^n_*)\ln \f{{f^n}'{f^n}'_*}{f^nf^n_*},
\end{split}
\end{equation*}
for all $t\ge 0$. This entropy estimate is crucial for the long
time behavior of renormalized solutions.
\end{Remark}

A consequence of the weak stability is the propagation of
smoothness of renormalized solutions.

\begin{Theorem}[Propagation of Smoothness] \label{2T2}
If,  in addition to the assumptions in Theorem \ref{2T1},  $f_0^n$
converges in $L^1(\R^6)$ to $f_0$, then $f^n$ converges to $f$ in
$C([0,T]; L^1(\R^6))$ for all $T\in [0,\infty)$, and $(f, E, B)$
is a renormalized solution of \eqref{21} if $(f^n, E^n, B^n)$ is a sequence of renormalized
solutions.
\end{Theorem}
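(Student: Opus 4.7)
The plan is to upgrade the weak-$*$ stability of Theorem \ref{2T1} to strong convergence in $C([0,T];L^1(\R^6))$ by applying the renormalized equation \eqref{df} with the strictly concave admissible choice $\beta(s)=\ln(1+s)$, for which $\beta(0)=0$, $\beta'(s)=(1+s)^{-1}$ and $\beta'(s)(1+s)\equiv 1$. After multiplication by cutoffs $\psi_m(x)\phi_n(\xi)$ of the type used in Lemma \ref{1L2}, integration in $(x,\xi)$, and use of $\Dv_{x,\xi}(\xi,E^n+\xi\times B^n)=0$ together with the $|\xi|^2$- and $\nu$-tightness provided by \eqref{ee}, the cutoffs can be removed in the limit $m,n\to\infty$ to produce the balance
\[
\int_{\R^6}\beta(f^n)(t)\,dxd\xi=\int_{\R^6}\beta(f_0^n)\,dxd\xi+\int_0^t\!\int_{\R^6}\frac{Q^+(f^n,f^n)-Q^-(f^n,f^n)}{1+f^n}\,dxd\xi\,ds.
\]
Since $f_0^n\to f_0$ strongly in $L^1(\R^6)$ and $\beta$ is Lipschitz, the initial term converges to $\int_{\R^6}\beta(f_0)\,dxd\xi$. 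For the dissipation term I invoke items (3)--(4) of Theorem \ref{2T1}: the quotients $Q^\pm(f^n,f^n)/(1+f^n)$ are relatively weakly compact in $L^1$ on every cylinder $(0,T)\times\R^3_x\times K$, with weak limits identified as $Q^\pm(f,f)/(1+f)$ via the convergence in measure of $Q^\pm(f^n,f^n)$. The same balance being valid for $(f,E,B)$, I obtain
\[
\int_{\R^6}\beta(f^n)(t)\,dxd\xi\;\longrightarrow\;\int_{\R^6}\beta(f)(t)\,dxd\xi,\qquad t\in[0,T].
\]

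Next I combine this limit with the weak convergence $f^n(t)\rightharpoonup f(t)$ in $L^1(\R^6)$ supplied by Theorem \ref{2T1}. Strict concavity of $\beta$ gives the pointwise inequality $\beta(f^n)-\beta(f)-\beta'(f)(f^n-f)\leq 0$, with equality only when $f^n=f$. The linear term $\int\beta'(f)(f^n-f)$ tends to zero since $\beta'(f)\in L^\infty$ and $f^n\rightharpoonup f$, and the convex part tends to zero by the previous step, so the nonpositive integrand must converge to $0$ in $L^1$. Strict concavity then forces $f^n(t)\to f(t)$ a.e.; the uniform integrability provided by the $|\xi|^2$-, $\nu$-weighted and $L\log L$ bounds in \eqref{ee} allows Vitali's theorem to promote this a.e.\ convergence to $f^n(t)\to f(t)$ in $L^1(\R^6)$ for every $t\in[0,T]$.

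Finally, to lift pointwise-in-$t$ strong convergence to convergence in $C([0,T];L^1(\R^6))$, I would verify equicontinuity of $\{f^n\}$ in $t$ with values in $L^1(\R^6)$. Testing \eqref{df} (for a truncation of the identity) against a dense countable family in $C_c^\infty(\R^6)$, the transport increments are controlled uniformly in $n$ by the $L^\infty(0,T;L^2\cap L^5)$- and $L^\infty(0,T;L^s)$-bounds on $E^n$ and $B^n$, while the collisional increments are controlled by the entropy dissipation estimate in \eqref{ee}; an Arzel\`a--Ascoli argument then delivers uniform-in-$t$ convergence, and that $(f,E,B)$ is a renormalized solution is exactly the content of Theorem \ref{2T1}. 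The principal obstacle is the limit passage in the renormalized collision term $\beta'(f^n)Q(f^n,f^n)$, since both factors converge only weakly in $L^1$; it is precisely the choice $\beta'(s)=(1+s)^{-1}$ combined with the averaging-lemma compactness summarized in items (1)--(4) of Theorem \ref{2T1} that makes the product pass to the correct limit.
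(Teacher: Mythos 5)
Your overall strategy (integrate the renormalized equation with $\beta(s)=\ln(1+s)$, show $\int\beta(f^n)(t)\to\int\beta(f)(t)$, then use strict concavity plus $f^n\rightharpoonup f$ to obtain strong convergence) is the right general idea, but your justification of the central limit passage is circular, and this is precisely the difficulty the paper's proof is constructed to avoid.

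The gap is the assertion that the weak $L^1$-limits of $Q^\pm(f^n,f^n)(1+f^n)^{-1}$ are $Q^\pm(f,f)(1+f)^{-1}$. Item (3) of Theorem \ref{2T1} gives only relative weak compactness on $(0,T)\times\R^3_x\times K$; the \emph{identity} of the limit is not part of that theorem. Write $\zeta$ for the weak-$*$ limit in $L^\infty$ of $(1+f^n)^{-1}$. By convexity of $t\mapsto(1+t)^{-1}$, one only knows $\zeta\ge(1+f)^{-1}$ a.e.; strict convexity shows equality is \emph{equivalent} to convergence in measure of $f^n$ to $f$, which is exactly what you are trying to prove. Invoking ``convergence in measure of $Q^\pm(f^n,f^n)$'' from item (4) does not fix this: in the paper's Lemma giving $Q^+_\dl=\zeta_\dl Q^+(f,f)$ (an Egorov argument), the factor that survives is $\zeta_\dl$, not $(1+\dl f)^{-1}$. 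So your claimed balance $\int\beta(f^n)(t)\to\int\beta(f)(t)$ is not established. The paper deliberately refuses to identify the limit and instead derives only the one-sided (supersolution) inequality \eqref{48} from $\zeta_\dl\ge\gamma_\dl'(f)$ and $\theta_\dl\le f\gamma_\dl'(f)$; this gives $\tau_\dl=\gamma_\dl(f)-\gamma_\dl\ge0$ satisfying a transport \emph{inequality} $\le0$ with $\tau_\dl|_{t=0}=0$ in which the collision terms cancel identically, and the uniqueness-type comparison argument (Lemma with cutoffs $\phi(x/n)\phi(\xi/n)$, using the $L^5$/$L^s$ integrability of $E,B$ and the $|\xi|^2$-tightness) forces $\tau_\dl\equiv0$. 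That is the device missing from your argument.

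Two secondary points. First, the weak compactness of $Q^\pm(f^n,f^n)(1+f^n)^{-1}$ is only on compact $\xi$-sets; your balance integrates the collision term over all of $\R^3_\xi$, which needs a separate uniform tightness argument in $\xi$ that you have not supplied. The paper never needs it, because in the difference $\tau_\dl$ the two collision contributions coincide and drop out. Second, your final step (equicontinuity in $t$ with values in $L^1$, then Arzel\`a--Ascoli) is only sketched; a uniform-in-$n$ modulus of continuity in $L^1$ is not readily available since $\partial_t\beta_\dl(f^n)$ is only controlled in a negative Sobolev space. The paper instead tests the equation for $\beta_\dl(f^n)^2$ to get uniform-in-$t$ convergence of $\int\beta_\dl(f^n)^2\phi$, combines this with the Aubin--Lions compactness of $\beta_\dl(f^n)$ in $C([0,T];W^{-s,1}_{loc})$, and deduces strong convergence in $C([0,T];L^2_\phi)$; this is a concrete route you should adopt or imitate.
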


\begin{Remark}
The assumption that $E(x,t)$ is uniformly bounded in
$L^\infty(0,T;L^5(\R^3))$ is crucial for Theorems \ref{2T1} and
\ref{2T2}, because of the nonlinear term associated with the Lorentz
force. Notice that usually from Maxwell's equations, we can only
obtain the $\textit{a priori}$ estimates on $E$ and $B$ in
$L^\infty(0,T; L^2(\R^3))$.
\end{Remark}

\bigskip

\section{Proof of Theorem \ref{2T1}: Weak Stability}

This section is devoted to the proof of  Theorem \ref{2T1}. We divide the proof into two steps. In the first step we
%briefly explain
show why the first four statements of  the theorem hold.  Then we concentrate in the second
step on the proof of the fact that the weak limit is indeed a
renormalized solution of Vlasov-Maxwell-Boltzmann equations. We
remark that the first step is essentially an adaptation of the
results and methods of \cite{DL4, DL2, DL3}, while the second one
requires a new result of renormalized solutions for the Vlasov-Maxwell
equations.

\subsection{Step One}
In this subsection, we are aiming at proving the first statement of Theorem \ref{2T1}
following the spirit of \cite{DL4}. Then the second and the
third statements can be  shown exactly as in \cite{DL4}. Finally, once the first
three statements hold, the fourth statement will immediately
follows from the argument in \cite{DL2}.
Therefore, for the sake of conciseness, we only give the detailed proof of the first statement of Theorem \ref{2T1}.

In order to prove the first statement, we first recall that for all compact sets
$K\subset \R^3_{\xi}$ and  $T\in (0,\infty)$, we have
\begin{equation}\label{4100}
\begin{split}
\int_{\R^3\times K} (1+f^n)^{-1}Q^{-}(f^n,f^n)dx d\xi
&\le \int_{\R^3\times K}  L(f^n) dx d\xi \\
&=\int_{\R^3}dx\int_{\R^3}f^n(x,\xi_*, t)\int_K
A(\xi-\xi_*)d\xi d\xi_*\\
&\le C\int_{\R^3}dx\int_{\R^3}
f^n(x,\xi_*,t)(1+|\xi_*|^2)d\xi_*<\infty,
\end{split}
\end{equation}
due to the assumption on the collision kernel $b$, hence,
$$(1+f^n)^{-1}Q^{-}(f^n,f^n)\quad\textrm{is bounded in}\quad
L^\infty(0,T; L^1(\R^3\times K)).$$ Also, we observe that we have (see \cite{DL4}),
\begin{equation*}
\begin{split}
Q^{+}(f^n,f^n)\le 2Q^{-}(f^n,f^n)+\f{1}{\ln
2}\int_{\R^3}d\xi_*\int_{S^2} b d\omega
(f^{n'}-f^n_*-f^nf^n_*)\ln\f{f^{n'}f^{n'}_*}{f^nf^n_*},
\end{split}
\end{equation*}
which, combining with \eqref{ee} and \eqref{4100}, implies
\begin{equation}\label{4101}
(1+f^n)^{-1}Q^{+}(f^n,f^n)\quad\textrm{is bounded in}\quad
L^1(0,T; L^1(\R^3\times K))
\end{equation}
for all compact sets $K$ in $\R_\xi^3$ and $T\in (0,\infty)$.

Next, we observe that since $f^n$ is a renormalized solution of
VBM \eqref{21}, we have, for $\beta=\beta_\dl=\f{t}{1+\dl t}$,
\begin{equation}\label{32}
\left(\f{\partial}{\partial
t}+\xi\cdot\nabla_x\right)\beta_\dl(f^n)=\beta_\dl'(f^n)Q(f^n,f^n)-\Dv_\xi((E^n+\xi\times
B^n)\beta_\dl(f^n))
\end{equation}
in $\mathcal{D}'$. In order to apply the velocity averaging
results in \cite{DL1, DLM}, we remark that \eqref{4100} and
\eqref{4101} imply that $\beta_\dl'(f^n)Q(f^n,f^n)$ is bounded in
$L^1(0,T; L^1(\R_x^3\times K))$ for all compact subsets $K$ of
$\R_\xi^3$. And also we observe that $\beta_\dl(f^n)$ is bounded
in $L^\infty((0,T)\times\R^6))$, and hence,
$\Dv_\xi((E^n+\xi\times B^n)\beta_\dl(f^n))$ is bounded in
$L^2((0,T)\times \R^3; H^{-1}_{\xi}(\R^3))$.
Denoting
$$\mathcal{T}_\dl(f^n)=\beta_\dl'(f^n)Q(f^n,f^n),$$
 and decomposing $\beta_\dl(f^n)$ into
$$u^n=\beta_\dl(f^n)\chi_{\{(t,x,\xi):|\mathcal{T}_\dl(f^n)|\le
M\}}\in L^2((0,T)\times \R^6)),$$
 $g^n$, and $h^n$ by
\begin{equation}\label{fp1}
\begin{split}
\left(\f{\partial}{\partial t}+\xi\cdot\nabla_x\right)u^n=&\mathcal{T}_\dl(f^n)\chi_{\{(t,x,\xi):|\mathcal{T}_\dl(f^n)|\le M\}}\\
&-\Dv_\xi\left((E^n+\xi\times B^n)\beta_\dl(f^n)\chi_{\{(t,x,\xi): |\mathcal{T}_\dl(f^n)|\le M\}}\right),
\end{split}
\end{equation}
\begin{equation}\label{fp2}
\left(\f{\partial}{\partial
t}+\xi\cdot\nabla_x\right)g^n=-\Dv_\xi((E^n+\xi\times
B^n)\beta_\dl(f^n)\chi_{\{(t,x,\xi):|\mathcal{T}_\dl(f^n)|> M\}}),
\end{equation}
\begin{equation}\label{fp3}
\left(\f{\partial}{\partial
t}+\xi\cdot\nabla_x\right)h^n=\mathcal{T}_\dl(f^n)\chi_{\{(t,x,\xi):|\mathcal{T}_\dl(f^n)|>
M\}},
\end{equation}
for $M>1$, where
$$h^n|_{t=0}=g^n|_{t=0}=0, \quad
u^n|_{t=0}=\beta_\dl(f^n_0),$$
and $\chi$ is the characteristic function of sets. Because
$\{\mathcal{T}_\dl(f^n)\}_{n=1}^\infty$ is weakly compact in
$L^1((0,T)\times\R^6)$ due to the facts that
$\beta'(t)=\f{1}{(1+\dl t)^2}\le\f{1}{1+\dl t}$ and
$\f{1}{1+f^n}Q(f^n, f^n)$ is weakly compact in
$L^1((0,T)\times\R^6)$, and because, from \eqref{fp3},
\begin{equation*}
h^n(t, x+t\xi,
\xi)=\int_0^t\mathcal{T}_\dl(f^n)\chi_{\{(t,x,\xi):|\mathcal{T}_\dl(f^n)|\ge
M\}}(\tau, x+\xi\tau,\xi)d\tau,
\end{equation*}
it follows that,  uniformly with respect to $n$,
\begin{equation}\label{al1}
\int_0^T\int_{\R^3}\int_{\R^3}|h^n(t, x, \xi)|d\xi dxdt\rightarrow
0,\quad\textrm{as}\quad M\rightarrow \infty.
\end{equation}
Similarly, from the compactness of $\mathcal{T}_\dl(f^n)$, we
deduce that
\begin{equation}\label{s}
\mathcal{S}^n:=(E^n+\xi\times
B^n)\beta_\dl(f^n)\chi_{\{(t,x,\xi):|\mathcal{T}_\dl(f^n)|>
M\}}\rightarrow 0
\end{equation}
in $L_{loc}^1((0,T)\times\R^6)$ as $M\rightarrow \infty $. From
\eqref{fp2}, we have
\begin{equation*}
g^n(t, x+t\xi, \xi)=\int_0^t-\Dv_\xi((E^n+\xi\times
B^n)\beta_\dl(f^n)\chi_{\{(t,x,\xi):|\mathcal{T}_\dl(f^n)|>
M\}})(\tau, x+\xi\tau,\xi)d\tau.
\end{equation*}
Thus, for any $\psi\in\mathcal{D}_\xi(\R^3)$, we deduce from the
above identity that
\begin{equation*}
\begin{split}
&\int_{\R^3}g^n(t, x+t\xi,
\xi)\psi(\xi)d\xi\\&\quad=\int_0^t\int_{\R^3}((E^n+\xi\times
B^n)\beta_\dl(f^n)\chi_{\{(t,x,\xi):|\mathcal{T}_\dl(f^n)|>
M\}})(\tau, x+\xi\tau,\xi)\cdot\nabla_\xi\psi d\xi d\tau.
\end{split}
\end{equation*}
Therefore, from the weak compactness of $\mathcal{S}^n$, the above
identity with \eqref{s} implies
\begin{equation}\label{s1}
\int_{\R^3}g^n(t, x, \xi)\psi d\xi\rightarrow
0,\quad\textrm{as}\quad M\rightarrow \infty,
\end{equation}
in $L^1_{loc}((0,T)\times \R^3)$.

 On the other hand, since $\{u^n\}_{n=1}^\infty$ and
$\left\{\mathcal{T}_\dl(f^n)\chi_{\{(t,x,\xi):|\mathcal{T}_\dl(f^n)|\le
M\}}\right\}_{n=1}^\infty$ are bounded sequences in
$L^2((0,T)\times \R^6)$, and $\Dv_\xi((E^n+\xi\times
B^n)\beta_\dl(f^n))$ is bounded in $L^2((0,T)\times \R^3;
H^{-1}_{\xi}(\R^3))$, by the velocity averaging lemma (Theorem 3
in \cite{DL1}), we deduce that
\begin{equation*}
\int_{\R^3}u^n \psi(\xi)d\xi\quad\textrm{is bounded in}\quad
H^{\f{1}{4}}((0,T)\times \R^3),
\end{equation*}
for all $\psi\in\mathcal{D}(\R^3)$. Thus, $\left\{\int_{\R^3}u^n
\psi(\xi)d\xi\right\}_{n=1}^\infty$ is compact in $L^2((0,T)\times\R^3)$ and is locally compact in
$L^1((0,T)\times\R^3)$,  which,  combining with \eqref{al1} and
\eqref{s1}, implies that
\begin{equation}\label{33}
\int_{\R_\xi^3}\beta_\dl(f^n)\psi d\xi\quad\textrm{is relatively
compact in}\quad L^p(\times(0,T), L_{loc}^1(\R^3))
\end{equation}
for all $1\le p<\infty$, $\psi\in \mathcal{D}(\R^3)$.

The first statement  of the theorem  for $\psi\in\mathcal{D}(\R^3)$ then follows
from \eqref{33} and \eqref{ee}, since it suffices to observe that
we have for all $R>1$,
\begin{equation}\label{*}
0\le f^n-\beta_\dl(f^n)\le \dl Rf^n+f^n\chi_{\{f^n>R\}}\le \dl
Rf^n+f^n\f{\ln f^n}{\ln R},
\end{equation}
and then take the limit as $R\rightarrow\infty$ and
$\dl\rightarrow 0$.
Next, for a general $\psi\in C(\R^3)$ such that
$\psi(\xi)(1+|\xi|^2)^{-1}\rightarrow 0$ as $|\xi|\rightarrow
\infty$, we introduce $$\eta_M=\eta\left(\f{\cdot}{M}\right),$$  for
$M>1$, where $\eta\in \mathcal{D}(\R^3)$, $0\le\eta\le 1$,
$\eta=1$ on $B_1$. Then the first statement holds for
$\psi\eta_M$, and the first statement will be valid for such a
$\psi$ provided  that
\begin{equation} \label{L1}
\sup_{n}\int_0^Tdt\int_{K\times\R^3}f^n|\psi|(1-\eta_M)dxd\xi\rightarrow
0\quad\textrm{as}\quad M\rightarrow\infty,
\end{equation}
for compact subsets $K\in \R_x^3$. Indeed, \eqref{L1} follows from \eqref{ee} since
\begin{equation*}
\begin{split}
&\int_0^T\int_{K\times\R^3}f^n|\psi|(1-\eta_M)dxd\xi\\
&\quad\le C\sup_{|\xi|\ge
M}\f{|\psi(\xi)|}{1+|\xi|^2}\int_0^Tdt\int_{K\times\R^3}f^n(1+|\xi|^2)\chi_{\{|\xi|\ge
M\}}\\
&\quad\le C\sup_{|\xi|\ge M}\f{|\phi(\xi)|}{1+|\xi|^2}
\end{split}
\end{equation*}
for some $C>0$ independent of $n$.

\subsection{Step Two}
We now aim at proving that $(f, E, B)$ is a renormalized solution
of VBM. First of all, we claim that it is enough to show that
\begin{Lemma}\label{2l1}If $f\in L^\infty(0,T; L^1(\R^6))$, the equation \eqref{df} holds if and only if
\begin{equation}\label{35}
\f{\partial}{\partial t}\ln(1+f)+\Dv_x(\xi \ln(1+f))+(E+\xi\times
B)\cdot\nabla_\xi \ln(1+f)=\f{1}{1+f}Q(f,f),
\end{equation}
in $\mathcal{D}'$.
\end{Lemma}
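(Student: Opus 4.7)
The forward direction ($\Rightarrow$) is essentially immediate: the function $\beta(t)=\ln(1+t)$ belongs to $C^1([0,\infty))$, satisfies $\beta(0)=0$, and has $\beta'(t)(1+t)\equiv 1$, so it is an admissible renormalizer in \eqref{df}. Substituting it, and using that $\xi$ does not depend on $x$ so that $\xi\cdot\nabla_x\ln(1+f)=\Dv_x(\xi\,\ln(1+f))$, gives \eqref{35} directly.

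For the reverse direction ($\Leftarrow$), the plan is a composition-of-renormalizations argument. Fix any admissible $\beta\in C^1([0,\infty))$ with $\beta(0)=0$ and $M:=\sup_{t\ge 0}|\beta'(t)(1+t)|<\infty$, and define
$$\gamma(s):=\beta(e^s-1),\qquad s\ge 0,$$
extended to a $C^1$ function on $\R$ (the extension to $s<0$ is irrelevant since $\ln(1+f)\ge 0$). Then $\gamma(0)=0$ and $\gamma'(s)=\beta'(e^s-1)\,e^s=\beta'(e^s-1)\bigl(1+(e^s-1)\bigr)$, whence $\|\gamma'\|_{L^\infty([0,\infty))}\le M$. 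The key observation is that if the chain rule with outer function $\gamma$ is applied to \eqref{35}, the algebraic identities
$$\gamma(\ln(1+f))=\beta(f),\qquad \gamma'(\ln(1+f))=\beta'(f)(1+f)$$
cause the factor $(1+f)$ produced by $\gamma'(\ln(1+f))$ to cancel the $1+f$ in the denominator of the source $Q(f,f)/(1+f)$, collapsing the resulting equation exactly to \eqref{df}.

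To justify this chain rule rigorously, I would invoke Lemma \ref{1L1}: convolve \eqref{35} with the double mollifier $\kappa_\varepsilon\otimes\kappa_{\mu(\varepsilon)}$, absorb all commutators into a remainder $\mathcal{A}_\varepsilon\to 0$ in $L^1_{(x,\xi),loc}$, multiply the resulting smooth equation by $\gamma'$ of the mollified quantity, and let $\varepsilon\to 0$. At this stage one needs $Q(f,f)/(1+f)\in L^1_{loc}$, which holds since $Q^{-}(f,f)/(1+f)\le L(f)$ is integrable by \eqref{ee} and \eqref{4100}, while $Q^{+}(f,f)/(1+f)$ is integrable by the entropy-dissipation bound behind \eqref{4101}. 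Boundedness of $\gamma'$ then permits Lebesgue dominated convergence to close the passage to the limit and deliver \eqref{df}.

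The main obstacle is that $\ln(1+f)$ lies only in $L^\infty([0,T],L^1(\R^6))$, not in $L^\infty_{(x,\xi)}$, while the commutator estimates in Lemma \ref{1L1} rely on $L^\infty$ control of the renormalized quantity. To circumvent this I would first truncate by $T_k(f):=\min(f,k)$, run the composition argument on $\ln(1+T_k(f))\in L^\infty$ (for which Lemma \ref{1L1} applies directly), and then send $k\to\infty$ using monotone and dominated convergence; the uniform bound $|\gamma'|\le M$, independent of $k$, ensures that both sides of the approximate equation remain controlled throughout the limit, yielding \eqref{df} in $\mathcal{D}'$.
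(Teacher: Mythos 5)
Your forward direction and the algebraic core of your reverse direction coincide with the paper's proof: you both use the composition $\sigma(s)=\beta(e^s-1)$ (your $\gamma$), the identities $\sigma(\ln(1+f))=\beta(f)$ and $\sigma'(\ln(1+f))=\beta'(f)(1+f)$, and the observation that $\sup|\sigma'|=\sup_{t\ge 0}|\beta'(t)(1+t)|<\infty$, so that the factor $1+f$ produced by $\sigma'$ cancels the denominator in $Q(f,f)/(1+f)$. The paper stops there and presents the chain rule formally. You go further and try to justify it rigorously via Lemma \ref{1L1}, and you are right to flag the obstacle: the commutator bound in Lemma \ref{1L1} uses $\|\cdot\|_{L^\infty_{x,\xi}}$ of the transported quantity, whereas $\ln(1+f)$ is only in $L^\infty(0,T;L^1(\R^6))$.

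The truncation remedy you propose, however, is circular. To run the mollification argument on $\ln(1+T_k(f))=\min\{\ln(1+f),\ln(1+k)\}$ you must first know the equation this function satisfies, namely the transport equation with source $\chi_{\{f<k\}}Q(f,f)/(1+f)$. But that equation is itself obtained by renormalizing \eqref{35} by the Lipschitz outer function $s\mapsto\min(s,\ln(1+k))$ --- exactly the kind of chain rule you set out to justify. Truncating the solution does not break the circle; it just replaces one outer function by another. If you want to upgrade the paper's formal computation to a rigorous one, the natural escape is not $L^\infty$ truncation but improved integrability: since $(\ln(1+t))^2\le Ct$ for all $t\ge 0$, the hypothesis $f\in L^\infty(0,T;L^1(\R^6))$ already gives $\ln(1+f)\in L^\infty(0,T;L^2(\R^6))$, and this $L^2$ bound can be paired in the commutator estimate with $\nabla_\xi(E+\xi\times B)\in L^\infty(0,T;L^2_{x,loc})$ (recall $\nabla_\xi(E+\xi\times B)$ is a constant-in-$\xi$ matrix built from $B$), replacing the $L^1$--$L^\infty$ duality in Lemma \ref{1L1} by an $L^2$--$L^2$ one. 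That is the kind of regularization argument the paper's formal multiplication by $\sigma'(\ln(1+f))$ implicitly relies on.
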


\begin{proof}
On one hand, if $f$ is a renormalized solution to VMB, then
\eqref{35} automatically holds since $\beta(f)=\ln(1+f)\in
C^1([0,\infty))$ with $\beta(0)=0$ and $\beta'(f)(1+f)=1$.

On the other hand, if \eqref{35} holds, we claim that $f$ is a
renormalized solution to VMB. Indeed, denoting
$$\sigma(s)=\beta(e^s-1)$$ for all
$\beta(t)\in C^1([0,\infty))$ with $\beta(0)=0$ and
$\beta'(t)(1+t)\le C$. Then, we have
$$\partial_t\sigma(f)=\sigma'(f)\partial_t f;$$
$$\nabla_x\sigma(f)=\sigma'(f)\nabla_x f;$$
$$\nabla_\xi\sigma(f)=\sigma'(f)\nabla_\xi f;$$
Multiplying \eqref{2l1} by $\sigma'(\ln(1+f))$, we obtain,
\begin{equation}\label{35111111111111111111}
\begin{split}
&\f{\partial}{\partial t}\sigma(\ln(1+f))+\Dv_x(\xi
\sigma(\ln(1+f)))+(E+\xi\times B)\cdot\nabla_\xi \sigma(\ln(1+f))\\
&=\sigma'(\ln(1+f))\f{1}{1+f}Q(f,f),
\end{split}
\end{equation}
in the sense of distributions. Note that, by the definition
$\sigma$, we have
$$\sigma(\ln(1+f))=\beta(f),\quad\textrm{and}\quad
\sigma'(\ln(1+f))=\beta'(f)(1+f).$$ Hence, substituting the  above two
identities in \eqref{35111111111111111111}, we get
\begin{equation*}
\f{\partial}{\partial t}\beta(f)+\Dv_x(\xi \beta(f))+(E+\xi\times
B)\cdot\nabla_\xi \beta(f)=\beta'(f)Q(f,f),
\end{equation*}
in the sense of distributions.
The proof of this lemma is complete.
\end{proof}

The rest of this subsection is devoted to the proof of \eqref{35}.
Recall that we deduce,  from $\textit{a priori}$ estimate \eqref{ee}
and weak passages to the limit,
\begin{equation}\label{36}
\begin{split}
\sup_{t\in [0,T]}&\left(\int_{\R^6}f(1+|\xi|^2+\nu(x)+|\log
f|)dxd\xi+\int_{\R^3}(|E|^2+|B|^2)dx\right)\\
&+\int_0^T\int_{\R^3}dx\int_{\R^6}d\xi d\xi_*\int_{S^2}d\omega
b(f'f'_*-ff_*)\log \f{f'f'_*}{ff_*}<\infty,
\end{split}
\end{equation}
for all $T\in (0,\infty)$.  Now  the strategy   to
prove \eqref{35} is the following: we first consider
$$\beta_\dl(f^n)=f^n(1+\dl f^n)^{-1}$$ for $\dl\in (0,1]$ and weakly
pass to the limit as $n$ goes to $\infty$ in the equation
satisfied by $\beta_\dl(f^n)$; then for the  equation satisfied by the limit of $\beta_\dl(f^n)$ as $n\rightarrow \infty$,  we use $\beta$ to renormalize it and let $\dl$ go to $0$ to recover \eqref{35}. 
To begin with, without loss of
generality, in view of \eqref{ee}, we can assume
$$f^n\rightarrow f\qquad\textrm{weakly}^*\quad \textrm{in}\quad
L^\infty(0,T; L^1(\R^6));$$
$$B^n\rightarrow B\qquad\textrm{weakly}^*\quad \textrm{in}\quad
L^\infty(0,T; L^2(\R^3));$$
$$E^n\rightarrow E\qquad\textrm{weakly}^*\quad \textrm{in}\quad
L^\infty(0,T; L^2(\R^3)\cap L^5(\R^3)).$$ Furthermore, without
loss of generality, extracting subsequence if necessary, we may
assume that for all $\dl>0$
\begin{equation}\label{37}
\beta_\dl(f^n)\rightarrow \beta_\dl\quad \textrm{weakly in}\quad
L^p(\R^6\times(0,T));
\end{equation}
\begin{equation}\label{38}
h_\dl^n=(1+\dl f^n)^{-2}\rightarrow h_\dl\quad\textrm{weakly-*
in}\quad L^\infty(\R^6\times(0,\infty));
\end{equation}
\begin{equation}\label{39}
g_\dl^n=f^n(1+\dl f^n)^{-2}\rightarrow g_\dl\quad\textrm{weakly
in}\quad L^p(\R^6\times(0,T)),
\end{equation}
for all $T\in (0,\infty)$, $1\le p\le \infty$. Furthermore,
because of the third statement and the equi-integrability, we may
assume that
\begin{equation}\label{310}
(1+\dl f^n)^{-2}Q^{\pm}(f^n,f^n)\rightarrow
Q^{\pm}_\dl\quad\textrm{weakly in}\quad L^1(\R^3_x\times K\times
(0,T)),
\end{equation}
for all compact sets $K\subset\R_\xi^3$ and $T\in(0,\infty)$.

Notice that, since $f^n$ is a renormalized solution of VMB,
\eqref{32} holds with $\beta(f^n)$ replaced by $\beta_\dl(f^n)$
for all $\dl>0$ and we want to pass to the limit in these
equations as $n$ goes to $\infty$. To this end, we deduce from the
first statement of Theorem \ref{2T1} that $\r^n$ and $j^n$
converge in $L^p(0,T; L^1(\R^3_x))$ to $\r$ and $j$, respectively
for all $1\le p<\infty$ and $T\in(0,\infty)$.
We then pass to the limit in \eqref{32} and we obtain
\begin{subequations}\label{311}
\begin{align}
&\f{\partial}{\partial t}\beta_\dl+\Dv_x(\xi
\beta_\dl)+(E+\xi\times
B)\cdot\nabla\beta_\dl=Q_\dl^+-Q_\dl^-,\quad x\in \R^3,\quad
\xi\in \R^3,\quad
t\ge 0,\label{e3111}\\
&\f{\partial E}{\partial t}-\nabla\times B=-j, \quad \Dv B=0,\quad
\textrm{on}\quad \R^3_x\times(0,\infty),\label{e3112}\\
&\f{\partial B}{\partial t}+\nabla\times E=0, \quad\Dv E=\r,\quad
\textrm{on}\quad \R^3_x\times(0,\infty),\label{e3113}\\
&\r=\int_{\R^3}fd\xi, \quad j=\int_{\R^3}f\xi d\xi, %\quad\textrm{for}\quad 1\le k\le3,
\quad \textrm{on}\quad
\R^3_x\times(0,\infty)\label{e3114},
\end{align}
\end{subequations}
in $\mathcal{D}'$. Here, for the convergence of the nonlinear term
$(E^n+\xi\times B^n)\cdot\nabla\beta_\dl(f^n)$, we need to show,
for all $\phi\in\mathcal{D}((0,\infty)\times\R^6)$,
\begin{equation}\label{ale}
\begin{split}
&\int_0^t\int_{\R^6}\phi(E^n+\xi\times
B^n)\cdot\nabla_\xi\beta_\dl(f^n)d\xi dxds\\
&=-\int_0^t\int_{\R^6}\nabla_\xi\phi\cdot(E^n+\xi\times
B^n)\beta_\dl(f^n) d\xi dxds,
\end{split}
\end{equation}
since $$\Dv_\xi(E^n+\xi\times B^n)=0.$$ If we take
$\phi=\ov{\phi}(t,x)\Phi(\xi)$ (which is enough by dense property)
for $\ov{\phi}\in\mathcal{D}((0,\infty)\times\R^3)$ and
$\Phi\in\mathcal{D}(\R^3)$, we can rewrite the term on the
right-hand side of \eqref{ale} as
$$-\int_0^t\int_{\R^3}\ov{\phi}(t,x)(E^n+\xi\times
B^n)\cdot\left(\int_{\R^3}\psi(\xi)\beta_\dl(f^n)d\xi\right)dxds,$$
by letting $\psi=\nabla_\xi \Phi.$ In fact, on one hand, by
\eqref{33} or the velocity averaging lemma in \cite {DLM},
$\int_{\R^3}\psi(\xi)\beta_\dl(f^n)d\xi$ and
$\int_{\R^3}\xi\psi(\xi)\beta_\dl(f^n)d\xi$ strongly converge to
$\int_{\R^3}\psi(\xi)\beta_\dl d\xi$ and
$\int_{\R^3}\xi\psi(\xi)\beta_\dl d\xi$ in $L^p(0,T;
L^1_{loc}(\R^3))$ respectively. On the other hand, since $\psi\in
C_0(\R^3)$ and $\beta_\dl(t)\le t$, we have, using \eqref{1116},
$$\left\{\int_{\R^3}\xi\psi
\beta_\dl(f^n)d\xi\right\}_{n=1}^\infty \quad\textrm{is uniformly
bounded in}\quad L^\infty(0,T; L^{\f{5}{4}}(\R^3)),$$ and
$$\left\{\int_{\R^3}\psi \beta_\dl(f^n)d\xi\right\}_{n=1}^\infty
\quad\textrm{is uniformly bounded in}\quad L^\infty(0,T;
L^{\f{5}{3}}(\R^3)).$$ The latter is true, because, for all $R>1$,
\begin{equation}\label{al3}
\begin{split}
\int_{\R^3}|\psi| \beta_\dl(f^n)d\xi&=\int_{\{|\xi|\le R\}}|\psi|
\beta_\dl(f^n)d\xi+\f{1}{R^2}\int_{\{|\xi|>R\}}|\xi|^2|\psi|
\beta_\dl(f^n)d\xi\\
&\le
\|\psi\|_{L^\infty}\left(R^3|B(0,1)|\|\beta_\dl(f^n)\|_{L^\infty}+\f{1}{R^2}\||\xi|^2\beta_\dl(f^n)\|_{
L_\xi^1(\R^3))}\right)\\
&\le
\|\psi\|_{L^\infty}\left(\f{R^3|B(0,1)|}{\dl}+\f{1}{R^2}\||\xi|^2f^n\|_{
L_\xi^1(\R^3))}\right),
\end{split}
\end{equation}
where $|B(0,1)|$ denotes the Lebesgue measure of the unit ball
$B(0,1)$ in $\R^3$, and by taking
$$R=\left(\f{\dl\||\xi|^2f^n\|_{
L_\xi^1(\R^3))}}{|B(0,1)|}\right)^{\f{1}{5}},$$ \eqref{al3}
becomes
\begin{equation*}
\int_{\R^3}|\psi| \beta_\dl(f^n)d\xi\le
2\|\psi\|_{L^\infty}\left(\f{|B(0,1)|^{\f{2}{5}}\||\xi|^2f^n\|_{
L_\xi^1(\R^3))}^{\f{3}{5}}}{\dl^{\f{2}{5}}}\right).
\end{equation*}
Therefore,
$$\left\{\int_{\R^3}|\psi|
\beta_\dl(f^n)d\xi\right\}_{n=1}^\infty\quad\textrm{is uniformly
bounded in}\quad L^\infty(0,T; L^{\f{5}{3}}_x(\R^3)),$$ since
$\{|\xi|^2f^n\}_{n=1}^\infty$ is uniformly bounded in
$L^\infty(0,T; L^1(\R^6))$. Thus $$\int_{\R^3}\xi\psi
\beta_\dl(f^n)d\xi\rightarrow \int_{\R^3}\xi\psi \beta_\dl d\xi
\quad\textrm{in}\quad L^p(0,T; L^s_{loc}(\R^3))\quad \textrm{for
all}\quad 1\le s<\f{5}{4},$$ and
$$\int_{\R^3}\psi \beta_\dl(f^n)d\xi\rightarrow \int_{\R^3}\psi \beta_\dl d\xi\quad\textrm{in}\quad L^p(0,T; L^r_{loc}(\R^3))\quad
\textrm{for all}\quad 1\le r<\f{5}{3},$$ for all $1\le p<\infty$.

The weak convergence of $E^n$ in $L^5((0,T)\times \R^3)$, combined
with the strong convergence of
$\int_{\R^3}\psi(\xi)\beta_\dl(f^n)d\xi$, implies
$$\int_0^t\int_{\R^6}\nabla_\xi\phi\cdot E^n \beta_\dl(f^n)
d\xi dxds\rightarrow \int_0^t\int_{\R^6}\nabla_\xi\phi\cdot E
\beta_\dl d\xi dxds.$$ The similar argument goes to the second
part of the nonlinear term
$$\int_0^t\int_{\R^3}\ov{\phi}(t,x)B^n\times
\left(\int_{\R^3}\psi(\xi)\xi\beta_\dl(f^n)d\xi\right)dxds,$$ due
to the weak convergence of $B^n$ in $L^q((0,T)\times \R^3)$ for
$q>5$. That is,
$$\int_0^t\int_{\R^6}\nabla_\xi\phi\cdot \xi\times
B^n\beta_\dl(f^n) d\xi dxds\rightarrow
\int_0^t\int_{\R^6}\nabla_\xi\phi\cdot \xi\times B\beta_\dl
d\xi.$$

Next, since $\beta_\dl(f^n)\in L^1(\R^6)\cap L^\infty(\R^6)$, we
know that $\beta_\dl\in L^\infty(\R^6)\cap L^1(\R^6)$. Also, since
$|\xi|^2f^n\in L^\infty(0,T; L^1(\R^6))$, we know that $\beta_\dl
(f^n)|\xi|^2\in L^\infty(0,T; L^1(\R^6))$ and
$\{\beta_\dl(f^n)\}_{n=1}^\infty$ is weakly compact in
$L^\infty(0,T; L^1(\R^6))$. Hence $|\xi|^2\beta_\dl\in
L^\infty(0,T; L^1(\R^6))$. Thus, for any $\sigma>0$, we have
$$\int_{\{\beta_\dl>\sigma\}}|\xi|^2dx d\xi<\f{1}{\sigma}\int_{\{\beta_\dl>\sigma\}}|\xi|^2\beta_\dl dx d\xi\le \f{1}{\sigma}\int_{\R^6}|\xi|^2\beta_\dl dx d\xi<\infty.$$
Therefore, Theorem \ref{1T1} implies that $\beta_\dl$ is a
renormalized solution of \eqref{311}.

As $\dl\rightarrow 0$, we claim that
\begin{Lemma}\label{2l2}
$$\beta_\dl\rightarrow f,\quad \textrm{in}\quad C([0,T]; L^1(\R^6)),$$
as $\dl\rightarrow 0$.
\end{Lemma}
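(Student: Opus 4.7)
The strategy is a sandwich argument driven by the entropy bound. The starting point is the elementary inequality already used in \eqref{*}: for every $R>1$,
\begin{equation*}
0 \le f^n - \beta_\dl(f^n) = \f{\dl (f^n)^2}{1+\dl f^n} \le \dl R f^n + f^n \chi_{\{f^n > R\}} \le \dl R f^n + \f{f^n \log f^n}{\log R},
\end{equation*}
so that the gap $f^n - \beta_\dl(f^n)$ is controlled pointwise by the mass and entropy densities, both uniformly bounded via \eqref{ee}.

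Since $\beta_\dl(f^n) \le f^n$ and both sides converge weakly, one first reads off $0 \le \beta_\dl \le f$ almost everywhere, so the difference $f - \beta_\dl$ is nonnegative. Integrating the displayed sandwich in $(x,\xi)$ over $\R^6$, using the tightness furnished by the uniform bounds $\int \nu(x) f^n\, dxd\xi \le C(T)$ and $\int |\xi|^2 f^n\, dxd\xi \le C(T)$ to pass from $L^1_{loc}$ to $L^1$, and invoking weak lower semicontinuity of the $L^1$-norm on nonnegative weakly convergent sequences, I obtain
\begin{equation*}
\|f(t) - \beta_\dl(t)\|_{L^1(\R^6)} \le C_0\, \dl R + \f{C(T)}{\log R},
\end{equation*}
with constants independent of $\dl, R, t$. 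Choosing $R = \dl^{-1/2}$ drives the right-hand side to $0$ uniformly in $t \in [0,T]$, so $\beta_\dl \to f$ in $L^\infty(0,T; L^1(\R^6))$.

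To upgrade to $C([0,T]; L^1(\R^6))$, recall that each $\beta_\dl$ is a renormalized solution of \eqref{311} (as noted after \eqref{310}) and hence already belongs to $C([0,T]; L^1(\R^6))$. A uniform-in-$t$ $L^1$-limit of continuous curves is continuous, so the limit $f$ sits in $C([0,T]; L^1(\R^6))$ and the convergence takes place there, as required.

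The main technical obstacle is the pointwise-in-$t$ version of the lower-semicontinuity step used in the second paragraph: the hypothesis only supplies $f^n \to f$ weakly-$\ast$ in $L^\infty(0,T; L^1(\R^6))$, which does not by itself permit testing at a single time slice. This is handled by a weak Arzel\`a--Ascoli argument: the renormalized equation \eqref{df} gives uniform-in-$n$ equicontinuity of $t \mapsto \beta_\dl(f^n)(t)$ into a local negative Sobolev space, while the entropy plus moment bounds from \eqref{ee} supply the Dunford--Pettis weak-$L^1$ compactness at each fixed $t$; together they yield $f^n(t) \rightharpoonup f(t)$ and $\beta_\dl(f^n)(t) \rightharpoonup \beta_\dl(t)$ in $L^1(\R^6)$ at every $t$ along a subsequence, validating the pointwise sandwich inequality.
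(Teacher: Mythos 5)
Your second half — the sandwich estimate, the passage through lower semicontinuity of the $L^1$-norm under weak convergence, and the explicit choice $R=\dl^{-1/2}$ to conclude $\sup_{t\in[0,T]}\|f(t)-\beta_\dl(t)\|_{L^1}\to 0$ — reproduces the paper's argument for the estimate \eqref{313} almost verbatim, and that part is sound. The observation that weak-$*$ convergence in $L^\infty(0,T;L^1)$ does not by itself give a pointwise-in-$t$ weak limit, and the proposed repair via equicontinuity in a negative Sobolev space together with Dunford--Pettis at each fixed $t$, is a legitimate concern that the paper glosses over, and your fix is reasonable.

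The gap is in the upgrade to $C([0,T];L^1(\R^6))$. You write that $\beta_\dl$ is a renormalized solution of \eqref{311} ``and hence already belongs to $C([0,T];L^1(\R^6))$'', but this inference does not hold. The renormalization framework from Section~2 — Theorem~\ref{1T1} and its extension to $L^{00}$ initial data — produces solutions that lie in $L^\infty([0,T];L^1\cap L^\infty)$ only; time continuity is not part of the conclusion. (Definition~3.1 does include $C([0,T];L^1)$, but that is the definition of a renormalized solution to the \emph{full} VMB system, which is not what is asserted after \eqref{310} — there, $\beta_\dl$ is identified as a renormalized solution of the Vlasov-with-source system \eqref{311} in the sense of Section~2.) The paper spends the entire first half of its proof establishing $\beta_\dl\in C([0,T];L^p(\R^6))$ for $1\le p<\infty$: it regularizes $\beta_\dl$ to $\beta_\dl^\i$ via Lemma~\ref{1L1}, shows $\beta_\dl^\i\in C([0,\infty);L^p)$ from the regularized equation \eqref{4000}, multiplies the difference of the equations by $|\beta_\dl-\beta_\dl^\i|^{p-2}(\beta_\dl-\beta_\dl^\i)$ to obtain \eqref{312}, and then concludes time continuity of $\beta_\dl$. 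Without something playing that role, the final sentence of your proof (``a uniform-in-$t$ $L^1$-limit of continuous curves is continuous'') has nothing to act on, because you have not shown the $\beta_\dl$ are continuous curves. You need to either reproduce the paper's regularization argument or give an alternative proof — for instance the Aubin--Lions style argument used later in Section~5 for $\gamma_\dl$ — that time continuity of $\beta_\dl$ actually holds.
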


\begin{proof} We start with proving the continuity of $\beta_\dl$ with respect to $t\ge 0$ with
values in $L^p(\R^6)$ for all $1\le p<\infty$.
%Indeed, in view of
%\eqref{311}, it is clearly enough to show that $\beta_\dl\in
%C([0,\infty); L^p_{loc}(\R^6))$ for all $1\le p<\infty$ by the
%equi-integrability of $\beta_\dl$, because, for every $\eta>0$,
%there exists a positive number $R$ such that
%$$\int_{([0,T]\times B_R\times B_R)^c}\beta_\dl^p dtdxd\xi\le \eta.$$
To this end, we remark that if we regularize by convolution
$\beta_\dl$ into $\beta_{\dl}^{\i}$ as in Lemma \ref{1L1}, we
obtain
\begin{equation}\label{4000}
\f{\partial}{\partial
t}\beta_{\dl}^{\i}+\xi\cdot\nabla_x\beta_{\dl}^{\i}+(E+\xi\times
B)\cdot\nabla_\xi\beta_{\dl}^{\i}=Q^{+}_{\dl}-Q^{-}_{\dl}+r^{\i}
\end{equation}
where $r^{\i}\rightarrow 0$ in $L^1(0,T; L^1_{loc}(\R^6)\cap
L^\infty(\R^6))$ as $\i$ goes to $0$ for all $T\in(0,\infty)$.
Hence, it is easy to see from \eqref{4000} that,
$\beta_{\dl}^{\i}\in C([0,\infty); L^p(\R^6))$ for $1\le
p<\infty$. Note that $\beta_\dl$ is a renormalized solution to the
VM \eqref{e3111}. Subtracting \eqref{4000} from \eqref{311},
multiplying the result by
$|\beta_\dl-\beta_\dl^\i|^{p-2}(\beta_\dl-\beta_\dl^\i)$, and then
integrating over $\R^6$, we obtain
\begin{equation}\label{312}
\f{d}{dt}\int_{\R^6}|\beta_\dl-\beta_{\dl}^{\i}|^p
dxd\xi\rightarrow 0\quad \textrm{in} \quad L^1(0,T),\quad
\textrm{as}\quad \i\rightarrow 0
\end{equation}
for all $1\le p<\infty$, $T\in (0,\infty)$.  It follows that
$\beta_\dl\in C([0,T]; L^1(\R^6))$.

Next, we show that $f\in C([0,\infty); L^1(\R^6))$. Indeed,
because of \eqref{ee}, we have for all $T\in (0,\infty)$, as in
\eqref{*}
\begin{equation}\label{313}
\sup_{t\in [0,T]}\sup_{n\ge
1}\|f^n-\beta_\dl(f^n)\|_{L^1(\R^6)}\rightarrow 0\quad
\textrm{as}\quad \dl\rightarrow 0.
\end{equation}
Hence, by the lower semi-continuity of the weak convergence, we
obtain
\begin{equation*}
\begin{split}
\sup_{t\in [0,T]}\|f-\beta_\dl\|_{L^1(\R^6)}&\le \sup_{t\in
[0,T]}\liminf_{n\rightarrow
\infty}\|f^n-\beta_\dl(f^n)\|_{L^1(\R^6)}\\
&\le \sup_{t\in [0,T]}\sup_{n\ge
1}\|f^n-\beta_\dl(f^n)\|_{L^1(\R^6)}\rightarrow 0\quad
\textrm{as}\quad \dl\rightarrow 0,
\end{split}
\end{equation*}
and this implies $\beta_\dl$ converges in $C([0,T]; L^1(\R^6))$ to
$f$.
\end{proof}

Now we can state the equation \eqref{e3111} more precisely. To
this end, we observe that $\f{-t}{1+\dl t}$, $\f{1}{(1+\dl t)^2}$
are convex on $[0,\infty)$, therefore we have
\begin{equation}\label{314}
\beta_\dl\le\beta_\dl(f),\qquad h_\dl\ge (1+\dl f)^{-2}\quad
\textrm{a.e on} \quad \R^6\times(0,\infty).
\end{equation}
In addition $\f{t}{(1+\dl t)^2}=\beta_\dl(t)(1-\dl\beta_\dl(t))$,
because the function $x(1-\dl x)$ is a concave function, hence
\begin{equation}\label{315}
g_\dl\le \beta_\dl(1-\dl\beta_\dl)\quad\textrm{a.e on}\quad
\R^6\times(0,\infty).
\end{equation}
Furthermore, because of the second statement of Theorem \ref{2T1},
we deduce that
\begin{equation}\label{316}
Q^-_\dl=g_\dl L(f)\quad\textrm{a.e on}\quad \R^6\times(0,\infty).
\end{equation}
And, using the fourth statement of Theorem \ref{2T1}, we could
also deduce that
\begin{equation}\label{317}
Q_\dl^+=h_\dl Q^+(f,f)\quad\textrm{a.e on}\quad
\R^6\times(0,\infty).
\end{equation}

We finally use the fact that $\beta_\dl$ is a renormalized
solution of \eqref{311} to write
\begin{equation}\label{318}
\begin{split}
\f{\partial}{\partial t}\beta(\beta_\dl)&+\Dv_x(\xi\beta(\beta_\dl))+(E+\xi\times
B)\cdot\nabla_\xi \beta(\beta_\dl)\\
&=(1+\beta_\dl)^{-1}Q_\dl^+-(1+\beta_\dl)^{-1}Q_\dl^{-1}.
\end{split}
\end{equation}
And we wish to recover \eqref{35} by letting $\dl$ go to $0$.
Recall that we already showed in Lemma \ref{2l2} that $\beta_\dl$
converges to $f$ in $C([0,T]; L^1(\R^6))$ for all
$T\in(0,\infty)$. Therefore, in order to complete the proof of
Theorem \ref{2T1}, it only remains to show
\begin{Lemma}\label{2l3}
\begin{equation}\label{320}
Q^{\pm}_\dl(1+\beta_\dl)^{-1}\quad\textrm{are weakly relatively
compact in}\quad L^1(\R^3_x\times K\times(0,T))
\end{equation}
for all compact sets $K\subset \R^3_\xi$ and $T\in (0,\infty)$,
and
\begin{subequations}\label{319}
\begin{align}
&(1+\beta_\dl)^{-1}Q_{\dl}^{-}\rightarrow
(1+f)^{-1}Q^{-}(f,f),\quad\textrm{a.e}\label{3191}\\
&(1+\beta_\dl)^{-1}Q_{\dl}^{+}\rightarrow (1+f)^{-1}Q^{+}(f,f),
\quad\textrm{a.e}\label{3192}
\end{align}
\end{subequations}
as $\dl$ goes to $0$.
\end{Lemma}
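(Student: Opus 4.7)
My plan is to reduce both statements of the lemma to establishing strong $L^1_{loc}$ convergence as $\dl\to 0$ of the three auxiliary weak limits $\beta_\dl$, $g_\dl$, $h_\dl$ to $f$, $f$, and $1$ respectively. Lemma \ref{2l2} already handles $\beta_\dl$. For $g_\dl$, I use the identity $\beta_\dl(f^n)-g_\dl^n=\dl(\beta_\dl(f^n))^2$ and split $\R^6$ into $\{f^n\le R\}\cup\{f^n>R\}$: on the first set the integrand is bounded by $\dl R f^n$ and on the second by $f^n\log f^n/\log R$, so the entropy bound in \eqref{ee} yields $\sup_n\|\beta_\dl(f^n)-g_\dl^n\|_{L^1(\R^6)}\to 0$ as $\dl\to 0$ (letting $\dl\to 0$ first, then $R\to\infty$). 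Passing to weak limits in $n$ and combining with Lemma \ref{2l2} gives $g_\dl\to f$ in $L^\infty(0,T;L^1(\R^6))$. Similarly, $0\le 1-(1+\dl f^n)^{-2}\le 2\dl f^n$ together with the uniform $L^\infty_t L^1_{x,\xi}$-bound on $f^n$ forces $h_\dl\to 1$ in $L^1_{loc}$. Extract a common subsequence on which all three limits hold almost everywhere.

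With these a.e.\ convergences in hand, the identifications \eqref{316}--\eqref{317} immediately give \eqref{319}:
\begin{equation*}
\f{Q^-_\dl}{1+\beta_\dl}=\f{g_\dl L(f)}{1+\beta_\dl}\longrightarrow\f{Q^-(f,f)}{1+f},\qquad \f{Q^+_\dl}{1+\beta_\dl}=\f{h_\dl Q^+(f,f)}{1+\beta_\dl}\longrightarrow\f{Q^+(f,f)}{1+f},\quad \text{a.e.}
\end{equation*}
For the weak $L^1$ compactness \eqref{320}, the loss term is straightforward: \eqref{315} yields $g_\dl/(1+\beta_\dl)\le 1$, hence $(1+\beta_\dl)^{-1}Q^-_\dl\le L(f)$ pointwise, and the cut-off assumption on $b$ gives $\int_{\R^3\times K}L(f)\,dxd\xi\le C\int f(1+|\xi|^2)\,dxd\xi\le C(T)$, placing the dominant in $L^1(\R^3\times K\times(0,T))$; Vitali's theorem then furnishes the equi-integrability. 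For the gain term I return to the pre-limit level, where the elementary identity
\begin{equation*}
(1+\beta_\dl(f^n))^{-1}(1+\dl f^n)^{-2}=\f{1}{(1+\dl f^n)(1+\dl f^n+f^n)}\le \f{1}{1+f^n}
\end{equation*}
controls the pre-limit gain by $Q^+(f^n,f^n)/(1+f^n)$, which is weakly $L^1_{loc}$-compact uniformly in $n$ by Theorem \ref{2T1}(3).

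\textit{The main obstacle} is the last identification: since $(1+\beta_\dl(f^n))^{-1}$ converges only weakly (not strongly) in $n$, the weak $L^1_{loc}$ limit of the pre-limit gain is not automatically $(1+\beta_\dl)^{-1}Q^+_\dl$. My proposed remedy is to write
\begin{equation*}
(1+\beta_\dl(f^n))^{-1}-(1+\beta_\dl)^{-1}=\f{\beta_\dl-\beta_\dl(f^n)}{(1+\beta_\dl)(1+\beta_\dl(f^n))}
\end{equation*}
and to pair this weakly-vanishing difference against $Q^+(f^n,f^n)$. Because $Q^+$ acts on $f^n$ only through moments in $\xi$, the strong compactness of velocity averages from Theorem \ref{2T1}(1) allows this pairing to vanish; alternatively, a Young-measure argument in the spirit of \cite{DL3} closes the gap and delivers the uniform equi-integrability that, together with Step 2, completes the proof via Vitali.
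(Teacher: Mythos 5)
Your treatment of the loss term (both \eqref{3191} and the $Q^-_\dl$ part of \eqref{320}) coincides with the paper's: $g_\dl\to f$ a.e.\ via the pre-limit cutoff bound and weak lower semicontinuity, then \eqref{316} and the domination $g_\dl/(1+\beta_\dl)\le 1$, $L(f)\in L^1(\R^3_x\times K\times(0,T))$. For \eqref{3192} you take a genuinely shorter route: you invoke \eqref{317} together with $h_\dl\to 1$ and $\beta_\dl\to f$ a.e., so the identity $(1+\beta_\dl)^{-1}Q^+_\dl=(1+\beta_\dl)^{-1}h_\dl Q^+(f,f)$ passes to the limit pointwise. The paper instead runs a two-sided sandwich argument around $Q^+_\dl$ (inequalities \eqref{324}--\eqref{327}, with an interposed $(1+\tau L(f^n))^{-1}$ renormalization to get equi-integrability of the truncated gain and the cutoff $f_R$ to control the tail), proving $Q^+_\dl\to Q^+(f,f)$ a.e.\ directly, without appealing to $h_\dl\to 1$. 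Your shortcut is defensible provided one accepts \eqref{317} at face value, and $h_\dl\to 1$ a.e.\ does follow by monotonicity of $\dl\mapsto(1+\dl f^n)^{-2}$ and your $L^1_{loc}$ estimate, so this part is sound and tidier than the paper.

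The genuine gap is in the weak $L^1$-compactness of $(1+\beta_\dl)^{-1}Q^+_\dl$ uniformly in $\dl$. Your pre-limit bound
$(1+\beta_\dl(f^n))^{-1}(1+\dl f^n)^{-2}Q^{+}(f^n,f^n)\le \f{Q^{+}(f^n,f^n)}{1+f^n}$
is correct, but, as you yourself flag, the weak limit in $n$ of the left-hand side is not $(1+\beta_\dl)^{-1}Q^+_\dl$, because it involves a product of two sequences each converging only weakly (the renormalizing prefactor and $(1+\dl f^n)^{-2}Q^+(f^n,f^n)$). Neither remedy you sketch repairs this: $Q^+(f^n,f^n)(t,x,\xi)$ is a bilinear integral in $f^n(\cdot,\cdot,\xi')f^n(\cdot,\cdot,\xi'_*)$ and is not a velocity average of $f^n$, so Theorem~\ref{2T1}(1) does not make the pairing vanish, and Young measures identify nonlinear functionals of a single weakly convergent sequence, not the weak limit of a product of two distinct weakly convergent objects. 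Nor does post-limit domination help: $(1+\beta_\dl)^{-1}h_\dl Q^+(f,f)\le Q^+(f,f)$, but $Q^+(f,f)$ is not known to lie in $L^1_{loc}$ (this is exactly what renormalization is circumventing), and the a.e.\ inequalities \eqref{314}--\eqref{315} go in the wrong direction to produce a $(1+f)^{-1}$ factor. The paper closes this by dividing the entropy-dissipation bound $Q^+(f^n,f^n)\le M\,Q^-(f^n,f^n)+\f{1}{\ln M}\tilde e^n$ by $(1+\dl f^n)^2$, passing to the limit to obtain $Q^+_\dl\le M\,Q^-_\dl+\f{1}{\ln M}\tilde e_0$ a.e.\ (with $\tilde e_0$ the absolutely continuous part of the measure limit of $\tilde e^n$), hence $(1+\beta_\dl)^{-1}Q^+_\dl\le M\,L(f)+\f{1}{\ln M}\tilde e_0$, a dominant in $L^1(\R^3_x\times K\times(0,T))$ that is independent of $\dl$; choosing $M$ large first and then the small set then gives equi-integrability. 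You should replace your gain-term compactness argument with this one.
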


\begin{proof}
We will follow the lines of the argument in \cite{DL3} and begin
with $Q_\dl^{-}$. Without loss of generality, we may assume that
$\beta_\dl$ converges $a.e.$ to $f$ as $\dl$ goes to $0$. Then,
\eqref{3191} follows since
$$(1+\beta_\dl)^{-1}Q^{-}_\dl=(1+\beta_\dl)^{-1}g_\dl
L(f)\rightarrow (1+f)^{-1}fL(f)$$ a.e. as $\dl\rightarrow 0$
provided we show that $g_\dl$ converges a.e. to $f$.

This is easy since we have for all $R>1$,
$$0\le f^n-f^n(1+\dl f^n)^{-2}\le 3R\dl f^n+f^n\chi_{\{f^n>R\}},$$
hence $g_\dl$ converges to $f$ in $C([0,T]; L^1(\R^6))$ for all
$T\in (0,\infty)$ by the uniform integrability of $f^n$ and the
lower semi-continuity of the weak convergence.  We now prove
\eqref{320} for $Q_{\dl}^{-}$ by first   observing  that
\eqref{316} yields
\begin{equation*}
\begin{split}
0&\le(1+\beta_\dl)^{-1}Q_\dl^{-}=(1+\beta_\dl)^{-1} g_\dl L(f)\\
&\le(1-\dl\beta_\dl)\f{\beta_\dl}{1+\beta_\dl} L(f)\le L(f),\quad
\textrm{a.e.}
\end{split}
\end{equation*}
And we conclude the proof of \eqref{320} for $Q_{\dl}^{-}$ by the equi-integrability, since $L(f)\in
L^\infty(0,T; L^1(\R^3_x\times K))$ for all compact sets $K\subset
\R^3_x$ and $T\in (0,\infty)$.

Next, we turn to the proof of  \eqref{320} for
$Q_\dl^{+}$ and \eqref{3192}. We begin with \eqref{320}.  We recall the
following classical inequality  for all $M> 1$,
\begin{equation}\label{321}
Q^+(f^n,f^n)\le MQ^{-}(f^n,f^n)+\f{1}{\ln M}\tilde{e}^n
\end{equation}
where
$$\tilde{e}^n=\int_{\R^3}d\xi_*\int_{S^2}  b \,  d\omega
({f^{n}}'{f^{n}_*}'-f^nf^n_*)\ln\f{{f^{n}}'{f^{n}_*}'}{f^nf^n_*}$$
is positive and bounded in $L^1(\R^6\times (0,T))$ for all $T\in
(0,\infty)$. Without loss of generality, we may assume that $\tilde{e}^n$
converges weakly in the sense of measures to some bounded
nonnegative measure $\tilde{e}$ on $\R^6\times[0,\infty)$ and we denote by
$\tilde{e}_0$ its regular part with respect to the usual Lebesgue measure,
that is, $\tilde{e}_0=\f{D\tilde{e}}{DyDt}$, $(y,t)\in\R^6\times (0,T)$. Dividing \eqref{321} by $(1+\dl
f^n)^2$ and letting $n$ go to $\infty$, we obtain
\begin{equation*}
Q_\dl^{+}\le M Q_\dl^{-}+\f{1}{\ln M}\tilde{e},
\end{equation*}
hence
\begin{equation*}
Q_\dl^{+}\le MQ_\dl^{-}+\f{1}{\ln M}\tilde{e}_0\quad\textrm{a.e. on} \quad
\R^6\times(0,\infty).
\end{equation*}
Then  \eqref{320}  for $Q_\dl^{+}$ follows since we already show it
for $Q_\dl^{-}$ and the integrability of $\tilde{e}_0$.

We finally prove \eqref{319} for $Q_\dl^{+}$. We first remark that
we have for all $R>0$,
\begin{equation}\label{324}
\begin{split}
Q^{+}(f^n,f^n)&\ge (1+\dl f^n)^{-2}Q^{+}(f^n,f^n)\\
&\ge (1+\dl R)^{-2}Q^{+}(f^n,f^n)\chi_{\{f^n<R\}}.
\end{split}
\end{equation}
In particular, if we multiply \eqref{324} by $\psi\in
C_0^\infty(\R_\xi^3)$ with $\psi\ge 0$, we find by letting $n$ go
to $\infty$ and using the third statement of Theorem \ref{2T1},
$$\int_{\R^3}Q^{+}(f,f)\psi d\xi\ge\int_{\R^3}Q^{+}_{\dl}\psi
d\xi\quad\textrm{a.e. on}\quad \R^3_x\times(0,\infty).$$
Indeed, the integrated left-hand side converges locally in measure while
the right-hand side converges weakly in $L^1$ and this is enough
to pass to the limit in the  inequality a.e. on
$\R^3_x\times(0,\infty)$. Therefore, we have for all $\dl\in (0,1]$,
\begin{equation}\label{325}
Q^{+}(f,f)\ge Q_\dl^{+}.
\end{equation}

Next, we use the other part of the inequality \eqref{324} and we
write for $\tau\in (0,1]$, using \eqref{321},
\begin{equation}\label{326}
\begin{split}
&(1+\dl R)^{-2}(1+\tau L(f^n))^{-1}Q^{+}(f^n,f^n)\\
&\le (1+\dl f^n)^{-2}Q^{+}(f^n,f^n)+(1+\tau
L(f^n))^{-1}\chi_{\{f^n>R\}}Q^{+}(f^n,f^n)\\
&\le (1+\dl f^n)^{-2}Q^{+}(f^n,f^n)+\f{1}{\ln
M}e^n+\f{M}{\tau}f^n\chi_{\{f^n>R\}}.
\end{split}
\end{equation}
We then observe that $Q^{+}(f^n,f^n)(1+\tau L(f^n))^{-1}$ is
relatively weakly compact in $L^1(\R^6\times(0,T))$ for all $T\in
(0,\infty)$ since it is bounded above by $\f{1}{\ln M}e^n+M\tau f^n$ for
all $M>1$. Hence, we may assume without loss of generality that it
converges weakly in $L^1(\R^6\times(0,T))$ for all
$T\in(0,\infty)$. We claim that its weak limit is given by $(1+\tau
L(f))^{-1}Q^{+}(f,f)$. Indeed, if $\psi\in L^\infty(\R^3_\xi)$
with compact support, we have
$$\int_{\R^3}(1+\tau L(f^n))^{-1}Q^{+}(f^n,f^n)\psi
d\xi=\int_{\R^3}Q^{+}(f^n,f^n)\psi_\tau^n d\xi,$$ where
$\psi_\tau^n$ is uniformly bounded in $L^\infty(\R^3_\xi)$, and has
a uniform compact support and $\psi_\tau^n\rightarrow
\psi_\tau=(1+\tau L(f))^{-1}\psi$ in $L^p((0,T)\times\R^6)$ for all
$1\le p<\infty$. This is enough to enable us to deduce
$$\int_{\R^3}Q^{+}(f^n,f^n)\psi_\tau^n d\xi\rightarrow
\int_{\R^3}Q^{+}(f,f)\psi_\tau d\xi$$ locally in measure on
$\R^3_x\times [0,\infty)$, which yields the claim.

We then pass to the limit in \eqref{326} and deduce as above
\begin{equation}\label{327}
\begin{split}
(1+\dl R)^{-2}(1+\tau L(f))^{-1}Q^{+}(f,f)\le Q^{+}_{\dl}+\f{1}{\ln
M}\tilde{e}_0+\f{M}{\tau}f_R,\quad \textrm{a.e.},
\end{split}
\end{equation}
where $f_R$ is the weak limit of $f^n\chi_{\{f^n>R\}}$ in
$L^1(\R^6)$. Since we have
$$\int_{\R^6}f_R dxd\xi =\lim_{n\rightarrow\infty}\int_{\R^6}
f^n\chi_{\{f^n>R\}}dxd\xi\le \f{C}{\ln R},$$ we deduce from
\eqref{327},  by letting first $\dl$ go to $0$, then $M$ go to
$\infty$, then $R$ go to $\infty$,  and finally $\tau$ go to $0$,  that
\begin{equation*}
Q^{+}(f,f)\le \lim_{\dl\rightarrow 0}Q^{+}_\dl\quad\textrm{a.e.},
\end{equation*}
which, combining with \eqref{325}, implies that
$$Q^{+}(f,f)= \lim_{\dl\rightarrow
0}Q^{+}_\dl\quad\textrm{a.e.}$$
The proof is complete.
\end{proof}

Putting together the conclusion of Step One, Lemmas \ref{2l1}-\ref{2l3}, we finish the proof of
Theorem \ref{2T1}.

\bigskip

\section{Proof of Theorem \ref{2T2}: Propagation of Smoothness}

In this section, we prove Theorem \ref{2T2}.
First, without loss of generality, in view of \eqref{ee}, we can
assume
$$f^n\rightarrow f\qquad\textrm{weakly}^*\quad \textrm{in}\quad
L^\infty(0,T; L^1(\R^6));$$
$$B^n\rightarrow B\qquad\textrm{weakly}^*\quad \textrm{in}\quad
L^\infty(0,T; L^2(\R^3)\cap L^5(\R^3));$$
$$E^n\rightarrow E\qquad\textrm{weakly}^*\quad \textrm{in}\quad
L^\infty(0,T; L^2(\R^3)\cap L^5(\R^3)).$$
Applying Theorem \ref{2T1}, we know that $f\in C([0,\infty);
L^1(\R^6))$ is a renormalized solution of VBM. In particular, we
know that we have, setting $\gamma_\dl(f)=\f{1}{\dl}\ln(1+\dl f)$,
\begin{equation}\label{41}
\begin{split}
\f{\partial}{\partial
t}\gamma_\dl(f)&+\Dv_x(\xi\gamma_\dl(f))+(E+\xi\times
B)\cdot\nabla_\xi\gamma_\dl(f)\\
&=\gamma'_\dl(f)Q^{+}(f,f)-f\gamma'_\dl(f)L(f),
\end{split}
\end{equation}
in $\mathcal{D}'$. It is easy to deduce that, $\gamma_\dl(f)\in
C([0,\infty); L^p(\R^6))$ for all $1\le p<\infty$ since
$$\gamma_\dl(f)\in C([0,\infty); L^1(\R^6))\cap L^\infty(0,\infty;
L^1(\R^6)), $$   hence
$$\gamma_\dl(f)|_{t=0}=\gamma_\dl(f_0)\quad\textrm{a.e. on}\quad
\R^6.$$

The strategy of the proof of Theorem \ref{2T2} goes as follows.
First of all, we introduce, without loss of generality, the weak
limit of $\gamma_\dl(f^n)$ in $L^p(\R^6\times(0,T))$ for all $T\in
(0,\infty)$ and $1\le p<\infty$, and we denote it by $\gamma_\dl$ (note the difference from the notation $\gamma_\dl(f^n)$ throughout this section).
The first step is to show that $\gamma_\dl$ is a
supersolution of \eqref{41}. In the second step, we deduce that
$\gamma_\dl=\gamma_\dl(f)$ and that $f^n$ converges to $f$ a.e. or
in $L^1(\R^6\times(0,T))$ for all $T\in(0,\infty)$. Finally in the
third step, we show that $f^n$ converges to $f$ in $C([0,T];
L^1(\R^6))$,  thus proving   Theorem \ref{2T2}.

Applying Theorem \ref{2T1} and a similar argument in Section 4,
we can show that $\gamma_\dl$ satisfies: $\gamma_\dl\in
L^\infty(0,T; L^p(\R^6))$ for all $T\in (0,\infty)$ and $1\le p<\infty$,
\begin{equation}\label{42}
0\le \gamma_\dl\le\gamma_\dl(f)\quad\textrm{a.e. on}\quad
\R^6\times(0,\infty),
\end{equation}
and
\begin{equation}\label{43}
\f{\partial \gamma_\dl}{\partial
t}+\Dv_x(\xi\gamma_\dl)+(E+\xi\times
B)\cdot\nabla_\xi\gamma_\dl=Q^{+}_\dl-Q^{-}_\dl,
\end{equation}
in $\mathcal{D}'$, where $Q^{+}_\dl$, $Q_\dl^{-}$ are respectively
the weak limits in $L^1(\R^3\times K\times(0,T))$ for all compact
sets $K\subset \R_\xi^3$ of $(1+\dl f^n)^{-1}Q^{+}(f^n,f^n)$,
$(1+\dl f^n)^{-1}Q^{-}(f^n,f^n)$. For the weak limit function $\gamma_\dl$,
we claim
\begin{Lemma}
$$\gamma_\dl\in C([0,\infty); L^p(\R^6))$$ for all $1\le p<\infty$.
\end{Lemma}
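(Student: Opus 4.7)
The plan is to mimic the argument for Lemma \ref{2l2}: regularize $\gamma_\dl$ in $(x,\xi)$ via the double mollifier of Lemma \ref{1L1}, observe that the regularized object lies in $C([0,\infty);L^p(\R^6))$, and then show that the $L^p$-norm of the difference goes to zero uniformly on $[0,T]$.

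More precisely, I would apply Lemma \ref{1L1} to the transport equation \eqref{43}, viewing $Q^+_\dl-Q^-_\dl$ as a given source in $L^1$ (locally in $\xi$). This produces, for each $\varepsilon>0$, a function
$$\gamma_\dl^{\varepsilon}:=(\gamma_\dl*\kappa_\varepsilon)*\kappa_{\mu(\varepsilon)}$$
which is smooth in $(x,\xi)$ and satisfies
$$\f{\partial\gamma_\dl^{\varepsilon}}{\partial t}+\mathcal{B}\cd\nabla_{(x,\xi)}\gamma_\dl^{\varepsilon}=(Q^+_\dl-Q^-_\dl)*\kappa_\varepsilon*\kappa_{\mu(\varepsilon)}+\ov{\mathcal{A}}_\varepsilon,$$
with $\ov{\mathcal{A}}_\varepsilon\to 0$ in $L^\infty([0,T];L^1_{loc}\cap L^\infty_{loc}(\R^6))$. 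Because $\gamma_\dl^{\varepsilon}$ is smooth in $(x,\xi)$ and its time derivative is expressed as above with right-hand side in $L^1(0,T;L^p(\R^6))$ (using the moment bound on $f$ and the estimates on $Q^\pm_\dl$), one obtains $\gamma_\dl^{\varepsilon}\in C([0,\infty);L^p(\R^6))$ for all $1\le p<\infty$.

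Next, I would subtract the regularized equation from \eqref{43}, multiply by $|\gamma_\dl-\gamma_\dl^{\varepsilon}|^{p-2}(\gamma_\dl-\gamma_\dl^{\varepsilon})$ and integrate over $\R^6$. Since $\Dv_{x,\xi}\mathcal{B}=\Dv_x\xi+\Dv_\xi(E+\xi\times B)=0$, the transport contribution becomes
$$\int_{\R^6}\mathcal{B}\cd\nabla_{(x,\xi)}\left(\f{|\gamma_\dl-\gamma_\dl^{\varepsilon}|^p}{p}\right)dxd\xi=0,$$
so that
$$\f{d}{dt}\int_{\R^6}|\gamma_\dl-\gamma_\dl^{\varepsilon}|^p\,dxd\xi=p\int_{\R^6}|\gamma_\dl-\gamma_\dl^{\varepsilon}|^{p-2}(\gamma_\dl-\gamma_\dl^{\varepsilon})\bigl[(Q^+_\dl-Q^-_\dl)-(Q^+_\dl-Q^-_\dl)*\kappa_\varepsilon*\kappa_{\mu(\varepsilon)}-\ov{\mathcal{A}}_\varepsilon\bigr]dxd\xi.$$
Using the uniform bound $0\le\gamma_\dl\le\gamma_\dl(f)\le 1/\dl$, Lebesgue's dominated convergence theorem and the local $L^1$ convergences established in Lemma \ref{1L1}, the right-hand side tends to $0$ in $L^1(0,T)$ as $\varepsilon\to 0$. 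Together with the fact that $\gamma_\dl^{\varepsilon}\to\gamma_\dl$ pointwise a.e. and in $L^p_{loc}$, this gives uniform convergence $\gamma_\dl^{\varepsilon}\to\gamma_\dl$ in $C([0,T];L^p(\R^6))$, whence the claim.

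The main technical obstacle I expect is making rigorous the multiplication by $|\gamma_\dl-\gamma_\dl^{\varepsilon}|^{p-2}(\gamma_\dl-\gamma_\dl^{\varepsilon})$, since $\gamma_\dl$ is only a weak solution of \eqref{43} and the source terms $Q^\pm_\dl$ are merely in $L^1(\R^3_x\times K\times(0,T))$ for compact $K\subset\R^3_\xi$. To handle this, I would first truncate in $\xi$ using a smooth cut-off $\phi_n$ as in the proof of Lemma \ref{1L2}, control the resulting commutator and boundary contributions by the moment bound $|\xi|^2 f\in L^\infty(0,T;L^1(\R^6))$, and finally let $n\to\infty$; the sublinear bound $\gamma_\dl\le\gamma_\dl(f)\le f$ ensures that all the truncation errors vanish. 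Once this is in place, the $L^p$ estimate closes and the continuity in time follows.
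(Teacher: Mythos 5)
Your plan has a genuine gap that stems from a confusion between $\beta_\dl$ and $\gamma_\dl$. You write the ``uniform bound $0\le\gamma_\dl\le\gamma_\dl(f)\le 1/\dl$'', but this bound is false: $\gamma_\dl(t)=\tfrac{1}{\dl}\ln(1+\dl t)$ is \emph{unbounded} in $t$, unlike $\beta_\dl(t)=\tfrac{t}{1+\dl t}\le 1/\dl$. The only available pointwise bound is the sublinear one $\gamma_\dl\le\gamma_\dl(f)\le f$, which you in fact invoke correctly a few lines later, but $f$ is merely $L^1$, not $L^\infty$. This matters for two reasons. First, your regularization step invokes Lemma \ref{1L1}, whose hypotheses require the function being mollified to lie in $L^\infty([0,T],L^1_{x,\xi}\cap L^\infty_{x,\xi})$; the $L^\infty_{x,\xi}$ part is used essentially in the commutator estimates (e.g.\ $\|[(E+\xi\times B)\cdot\nabla_\xi,\kappa_\mu](f)\|_{L^1}\le C\|E+\xi\times B\|_{W^{1,1}_{\xi,loc}}\|f\|_{L^\infty_\xi}$ and the subsequent $\|\nabla_\xi f_\mu\|_{L^\infty}$ bound), and $\gamma_\dl$ does not satisfy it. This is precisely why the analogous argument for $\beta_\dl$ in Lemma \ref{2l2} works but does not carry over verbatim. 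Second, the $L^p$ energy estimate you propose (multiplying by $|\gamma_\dl-\gamma_\dl^\varepsilon|^{p-2}(\gamma_\dl-\gamma_\dl^\varepsilon)$) implicitly uses that $\gamma_\dl$ is a \emph{renormalized} solution of \eqref{43}, i.e.\ that the chain rule is valid for compositions with $C^1$ functions; this is not automatic from the distributional identity \eqref{43} and is exactly the nontrivial content one must first establish.

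The paper's actual proof supplies the missing ingredient by a composition trick that you skip entirely: one introduces $\gamma_\dl^\i(f^n)=\gamma_\dl(\beta_\i(f^n))$, which \emph{is} uniformly bounded (by $\gamma_\dl(1/\i)$) for fixed $\i$, so that the Section 4 machinery applies and shows its weak limit $\gamma_\dl^\i$ is a renormalized solution of \eqref{44} lying in $C([0,\infty);L^p)$. One then verifies the uniform convergences \eqref{440} and the analogous estimates for the $Q^\pm$ terms, lets $\i\to 0$, and concludes that $\gamma_\dl$ itself is a renormalized solution of \eqref{43}. Only afterwards does the paper extract time continuity, and it does so not by an $L^p$ energy estimate in the spirit of Lemma \ref{2l2}, but by reading off $\partial_t\gamma_\dl\in L^1(0,T;W^{-n,1})$ from \eqref{43}, combining with the bounds $0\le\gamma_\dl\le f\in L^\infty(0,T;L^1)$ and $\gamma_\dl\in L^\infty(0,T;L^p)$ for all $p<\infty$, and then invoking Aubin--Lions and an interpolation argument. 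To fix your proposal, you would need either to first reduce to a bounded renormalization (as the paper does via $\gamma_\dl\circ\beta_\i$) before applying the Lemma \ref{1L1} machinery, or to give a commutator estimate that works under the sublinear $\gamma_\dl\le f$ bound rather than an $L^\infty$ bound; as written, the proof does not close.
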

\begin{proof}
In fact, we claim that the weak limit $\gamma_\dl$ is a renormalized solution of
\eqref{43} and then $$\gamma_\dl\in C([0,\infty); L^p(\R^6))$$ for
all $1\le p<\infty$. For this purpose, we introduce
$$\gamma_\dl^\i(f^n)=\gamma_\dl(\beta_{\i}(f^n))$$ for $\i\in(0,1]$
and denote its weak limit by $\gamma^{\i}_{\dl}$. Then, the proof
 in Section 4 applies and shows that the weak limit $\gamma_{\dl}^{\i}\in
C([0,\infty);L^p(\R^6))$ is a renormalized solution of
\begin{equation}\label{44}
\begin{split}
\f{\partial}{\partial
t}\gamma_{\dl}^{\i}&+\Dv_x(\xi\gamma_{\dl}^{\i})+(E+\xi\times
B)\cdot\nabla_\xi\gamma_{\dl}^{\i}\\
&=\ov{\gamma'_\dl(\beta_\i(f))\beta'_\i(f)Q^{+}_{\dl,\i}}-\ov{\gamma'_\dl(\beta_\i(f))\beta'_\i(f)Q^{-}_{\dl,\i}},
\end{split}
\end{equation}
where the notation $\ov{g}$ means the weak limit of the sequence
$\{g_n\}_{n=1}^\infty$ in $L^1_{loc}$. Next, we claim
\begin{equation}\label{440}
0\le\gamma_\dl(f^n)-\gamma_{\dl}^{\i}(f^n)\le
f^n-\beta_{\i}(f^n)\rightarrow 0\quad\textrm{in}\quad L^1(\R^6)
\end{equation}
uniformly in $n\ge 1$, $t\in [0,T]$. Indeed, since the sequence
$\{f_n\}_{n=1}^\infty$ is equi-integrable, for any $\eta>0$, there
exists two positive numbers $D$ and $R$ such that
$$\sup_{n\in N}\int_{([0,T]\times B_R\times B_R)^c}f^ndtdxd\xi\le\eta,$$
and
$$\sup_{n\in N}\int_{\{f^n\ge D\}}f^ndtdxd\xi\le\eta.$$ Hence, in
particular,
$$\sup_{n\in N}\int_{\{f^n\ge D\}\cap [0,T]\times B_R\times
B_R}f^ndtdxd\xi\le\eta.$$ Therefore, we have
\begin{equation}\label{4400}
\begin{split}
&\sup_{n\in N}\int_{[0,T]\times
\R^3\times\R^3}(f^n-\beta_\i(f^n))dtdxd\xi \\
&\le\sup_{n\in N}\int_{([0,T]\times B_R\times
B_R)^c}f^ndtdxd\xi\\
&\quad+\sup_{n\in N}\int_{\{f^n\ge D\}\cap
[0,T]\times B_R\times B_R}f^ndtdxd\xi\\
&\quad+\sup_{n\in N}\int_{\{f^n\le D\}\cap [0,T]\times B_R\times
B_R}(f^n-\beta_\i(f^n))dtdxd\xi\\
&\le 2\eta+D^2R^6T\i,
\end{split}
\end{equation}
since $f^n-\beta_\i(f^n)\le D^2\i$ if $f^n\le D$. Thus, letting
first $\i$ go to $0$ and then $\eta$ go to $0$ in \eqref{4400}, we
deduce \eqref{440}.

Similarly, we have
$$0\le 1-\beta'_\i(f^n)\rightarrow 0\quad\textrm{in}\quad L^1(\R^6)$$
uniformly in $n\ge 1$, $t\in [0,T]$;
\begin{equation*}
\begin{split}
0&\le(\gamma'_\dl(\beta_{\i}(f^n))-\gamma'_{\dl}(f^n))\beta'_{\i}(f^n)Q^{-}(f^n,f^n)\\
&\le \f{\i f^n}{1+\dl f^n}Q^{-}(f^n, f^n)\rightarrow
0\quad\textrm{in}\quad L^1(0,T; L^1(\R^3_x\times K))
\end{split}
\end{equation*}
uniformly in $n\ge 1$, for all compact sets $K\subset \R^3_\xi$;
and
\begin{equation*}
\begin{split}
0&\le(\gamma'_\dl(\beta_{\i}(f^n)-\gamma'_{\dl}(f^n))\beta'_{\i}(f^n))Q^{+}(f^n,f^n)\\
&\le \f{\i f^n}{1+\dl f^n}Q^{+}(f^n, f^n)\rightarrow
0\quad\textrm{in}\quad L^1(0,T; L^1(\R^3_x\times K))
\end{split}
\end{equation*}
uniformly in $n\ge 1$, for all compact sets $K\subset \R^3_\xi$.
Here, we used
$$(\gamma'_\dl(\beta_{\i}(f^n))-\gamma'_{\dl}(f^n))\beta'_{\i}(f^n)\le\f{\i f^n}{1+\dl f^n}.$$
Thus, letting $\i$ go to $0$ in \eqref{44}, we deduce that
$\gamma_\dl$ is a renormalized solution to \eqref{43}.

Hence, from \eqref{43}, we deduce that
$$\f{\partial \gamma_\dl}{\partial t}\in L^1(0,T;
W^{-n,1}(\R^3))$$ for $n>0$ large enough. Also, we know that,
since $\gamma_\dl(t)$ is a strictly concave function,
$$0\le\gamma_\dl\le\gamma_\dl(f)\le f\in L^\infty(0,T;
L^1(\R^6)).$$ Hence, by the Aubin-Lions lemma in \cite{JL}, we
know that
$$\gamma_\dl\in C([0,T]; W^{-s, 1}(\R^6)).$$
But actually, we know
$$\gamma_\dl\in L^\infty([0,T], L^p(\R^6))$$
for all $1\le p<\infty$. Thus, by the interpolation, we know that
$$\gamma_\dl\in C([0,T]; L^p(\R^6))$$ for all $1\le p<\infty$.
\end{proof}

\subsection{Step One: $\gamma_\dl$ is a supersolution of \eqref{41}}
Without loss of generality, we may assume that we have
$$\gamma'_\dl(f^n)=\f{1}{1+\dl f^n}\rightarrow \zeta_\dl\quad
\textrm{weakly* in}\quad L^\infty(\R^6\times(0,\infty)),$$ and
$$f^n\gamma'_\dl(f^n)=\f{f^n}{1+\dl f^n}\rightarrow \theta_\dl\quad
\textrm{weakly* in}\quad L^\infty(\R^6\times(0,\infty)).$$
Furthermore, since $\gamma'_\dl(f)$,  $-t\gamma'_\dl(f)$ are convex
on $[0,\infty)$, we deduce the following inequalities:
\begin{subequations}\label{45}
\begin{align}
&\zeta_\dl\ge \f{1}{1+\dl f}=\gamma'_\dl(f),\quad \theta_\dl\le
\f{f}{1+\dl f}=f\gamma'_\dl(f),\\
&\gamma_\dl\le\f{1}{\dl}\ln(1+\dl
f)=\gamma_\dl(f)\quad\textrm{a.e. in}\quad \R^6\times(0,\infty).
\end{align}
\end{subequations}
We claim
\begin{Lemma}
\begin{equation}\label{46}
Q^{-}_\dl=\theta_\dl L(f)\quad\textrm{a.e. on}\quad
\R^6\times(0,\infty).
\end{equation}
\end{Lemma}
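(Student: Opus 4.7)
The plan is to identify $Q^{-}_\dl$ with $\theta_\dl L(f)$ by rewriting the sequence defining $Q^{-}_\dl$ as a product in which one factor converges weakly-$*$ in $L^\infty$ and the other converges strongly in $L^1_{\rm loc}$. First I would recall that by the very definition
\[
Q^{-}(f^n,f^n)=f^n L(f^n),
\]
so
\[
(1+\dl f^n)^{-1}Q^{-}(f^n,f^n)=\frac{f^n}{1+\dl f^n}\,L(f^n),
\]
and the left-hand side converges weakly in $L^1(\R^3_x\times K\times (0,T))$ to $Q^{-}_\dl$ for every compact $K\subset\R^3_\xi$ and every $T\in(0,\infty)$.

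Next I would exploit the two convergences already at hand: by assumption $\f{f^n}{1+\dl f^n}\rightharpoonup\theta_\dl$ weakly-$*$ in $L^\infty(\R^6\times(0,\infty))$ (uniformly bounded by $1/\dl$), while the second statement of Theorem \ref{2T1} gives $L(f^n)\to L(f)$ strongly in $L^p(0,T;L^1(\R^3_x\times K))$ for every $1\le p<\infty$ and every compact $K\subset\R^3_\xi$. The key step is then a standard weak-strong product argument: for any test function $\phi\in C_0^\infty(\R^3_x\times K\times(0,T))$, write
\[
\int \frac{f^n}{1+\dl f^n}L(f^n)\phi\,dtdxd\xi
=\int\frac{f^n}{1+\dl f^n}\bigl(L(f^n)-L(f)\bigr)\phi\,dtdxd\xi
+\int\frac{f^n}{1+\dl f^n}L(f)\phi\,dtdxd\xi.
\]
The first term vanishes as $n\to\infty$ because $\left|\f{f^n}{1+\dl f^n}\phi\right|\le\|\phi\|_{L^\infty}/\dl$ and $L(f^n)\to L(f)$ in $L^1$ on the support of $\phi$; the second term converges to $\int\theta_\dl L(f)\phi\,dtdxd\xi$ by weak-$*$ convergence in $L^\infty$ against the $L^1$ function $L(f)\phi$, which is integrable locally since the bounds \eqref{ee} on $f$ and on the kernel $A$ imply $L(f)\in L^\infty(0,T;L^1(\R^3_x\times K))$.

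Putting these two pieces together identifies the weak limit pointwise-in-test-functions, hence $Q^{-}_\dl=\theta_\dl L(f)$ almost everywhere on $\R^6\times(0,\infty)$. The only delicate point — the ``main obstacle'' in practice — is verifying that $L(f)\phi$ is admissible as a test function for the weak-$*$ convergence in $L^\infty$, and that the product $\f{f^n}{1+\dl f^n}(L(f^n)-L(f))$ tends to zero in $L^1$ on the compact support of $\phi$; both are routine given the uniform $L^\infty$ bound $1/\dl$ and the strong convergence of $L(f^n)$ furnished by Theorem \ref{2T1}. This completes the identification.
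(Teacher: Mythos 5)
Your proposal is correct, but the route is genuinely simpler than the one in the paper, and it is worth noting the difference. Both arguments rest on the same two ingredients—the weak-$*$ convergence $\f{f^n}{1+\dl f^n}\rightharpoonup\theta_\dl$ in $L^\infty$ and the second statement of Theorem \ref{2T1}—but they use the latter in different ways. You exploit the \emph{full strength} of statement (2), namely the strong convergence $L(f^n)\to L(f)$ in $L^1(\R^3_x\times K\times(0,T))$, which together with the uniform bound $\bigl|\f{f^n}{1+\dl f^n}\bigr|\le\f{1}{\dl}$ makes the add-and-subtract term
\[
\int\frac{f^n}{1+\dl f^n}\bigl(L(f^n)-L(f)\bigr)\phi
\]
vanish directly, and then the remaining term closes by weak-$*$/strong-$L^1$ duality. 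The paper instead downgrades statement (2) to mere almost-everywhere convergence of $L(f^n)$, applies Egorov's theorem to restore uniform convergence off a set $E$ of small measure, and then estimates three pieces: the contribution of $E$ (controlled by equi-integrability of $\{L(f^n)\}$ and integrability of $L(f)$), the weak-$*$ pairing of $\f{f^n}{1+\dl f^n}-\theta_\dl$ against $\chi_{E^c}L(f)$, and the uniformly convergent piece on $E^c$. The Egorov detour would be necessary if one only had a.e.\ convergence, but since the theorem already delivers $L^1$-norm convergence of $L(f^n)$, your two-term decomposition is shorter, avoids the auxiliary exceptional set, and reaches the same conclusion. Both correctly identify the weak $L^1$ limit $Q^{-}_\dl$ with $\theta_\dl L(f)$ by testing against a dense class of functions, so no gap remains in your argument.
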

\begin{proof}
In fact, it is enough to verify that \eqref{46} holds in
$[0,T]\times B_R\times B_R$, where $B_R$ is the ball with radius
$R$ and centered at the origin in $\R^3$. Due to the second
statement of Theorem \ref{2T1}, we know that $L(f^n)$ converges
a.e. to $L(f)$ in $[0,T]\times B_R\times B_R$. By Egorov's Theorem
(\cite{Rubin}), for any $\i>0$, there exists a subset $E\subset
[0,T]\times B_R\otimes B_R$ with $|E|\le \i$ such that $L(f^n)$
converges uniformly to $L(f)$ on $E^c$. Thus, for all $\phi\in
L^\infty(\R^6\times (0,T))$,
\begin{equation*}
\begin{split}
&\left|\int_0^T\int_{B_R}\int_{B_R}\phi\left(\f{f^n}{1+\dl
f^n}L(f^n)-\theta_\dl L(f)\right) dxd\xi dt\right|\\
&\quad\le\|\phi\|_{L^\infty}\sup_{n}\int_{E}|L(f^n)|+|\theta_\dl||L(f)|dxd\xi
dt\\
&\qquad+\|\phi\|_{L^\infty}\left|\int_{E^c}\left(\f{f^n}{1+\dl f^n}-\theta_\dl\right) L(f)dxd\xi ds\right|\\
&\qquad+\|\phi\|_{L^\infty}|E^c|\sup_{E^c}\left|L(f^n)-L(f)\right|.
\end{split}
\end{equation*}
The first term  can be made arbitrarily small uniformly in $n$,
due to the equi-integrability of $\{L(f^n)\}_{n=1}^\infty$. The
second term also goes to $0$ since $L(f)\in L^1((0,T)\times
B_R\times B_R)$. And the third term goes to $0$ as $n$ goes to
$\infty$ since the uniform convergence of $L(f^n)$ to $L(f)$ in
$E^c$. Thus, \eqref{46} is verified.
\end{proof}

Similarly, we have

\begin{Lemma}
\begin{equation}\label{47}
Q_\dl^{+}=\zeta_\dl Q^{+}(f,f)\quad \textrm{a.e. on}\quad
\R^6\times(0,\infty).
\end{equation}
\end{Lemma}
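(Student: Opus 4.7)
The plan is to adapt the Egorov-theorem strategy used in the previous lemma to prove identity \eqref{46}, with the role of the a.e.\ convergence $L(f^n) \to L(f)$ (taken from statement (2) of Theorem \ref{2T1}) replaced by the local-in-measure convergence $Q^{+}(f^n, f^n) \to Q^{+}(f,f)$ supplied by statement (4) of the same theorem. By extracting a subsequence one may assume a.e.\ convergence of $Q^{+}(f^n,f^n)$ to $Q^{+}(f,f)$ on every compact subset of $[0,T] \times \R^3 \times \R^3$.

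For a bounded test function $\phi$ with support in $[0,T] \times B_R \times B_R$ I would decompose
\begin{equation*}
(1+\dl f^n)^{-1}Q^{+}(f^n,f^n) - \zeta_\dl Q^{+}(f,f) = (1+\dl f^n)^{-1}\bigl[Q^{+}(f^n,f^n) - Q^{+}(f,f)\bigr] + \bigl[(1+\dl f^n)^{-1} - \zeta_\dl\bigr]Q^{+}(f,f).
\end{equation*}
The second piece, tested against $\phi$, tends to $0$ by weak-$\ast$ convergence $(1+\dl f^n)^{-1} \to \zeta_\dl$ in $L^\infty$, provided $\phi\, Q^{+}(f,f) \in L^1$. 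For the first piece, Egorov's theorem produces a set $E \subset [0,T] \times B_R \times B_R$ with $|E| \le \i$ on whose complement the convergence of $Q^{+}(f^n,f^n)$ to $Q^{+}(f,f)$ is uniform. The integral on $E^c$ then vanishes as $n \to \infty$ since $(1+\dl f^n)^{-1} \le 1$; the integral on $E$ is bounded by
\begin{equation*}
\|\phi\|_{L^\infty}\left(\int_E (1+\dl f^n)^{-1}Q^{+}(f^n,f^n)\,dx\,d\xi\,dt + \int_E Q^{+}(f,f)\,dx\,d\xi\,dt\right),
\end{equation*}
which is $o_\i(1)$ uniformly in $n$: the first term by the equi-integrability of $(1+\dl f^n)^{-1}Q^{+}(f^n,f^n)$ for fixed $\dl>0$, deduced from the pointwise bound $(1+\dl f^n)^{-1}Q^{+}(f^n,f^n) \le \dl^{-1}(1+f^n)^{-1}Q^{+}(f^n,f^n)$ together with \eqref{4101}; the second term by local integrability of $Q^{+}(f,f)$.

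The main obstacle is precisely to establish that $Q^{+}(f,f) \in L^1_{loc}$, since in a DiPerna--Lions framework only $Q^{+}(f,f)/(1+f)$ is guaranteed to be locally integrable a priori. I would obtain it in the same way as \eqref{325}: starting from the sandwich
\begin{equation*}
(1+\dl f^n)^{-1}Q^{+}(f^n,f^n) \ge (1+\dl R)^{-1}Q^{+}(f^n,f^n)\chi_{\{f^n \le R\}},
\end{equation*}
multiply by a nonnegative $\psi \in C_c^\infty(\R^3_\xi)$ and integrate in $\xi$ so as to apply the local-in-measure convergence of the velocity averages from statement (3) of Theorem \ref{2T1}; the weak $L^1_{loc}$ limit of the left-hand side is $\int Q^{+}_\dl\psi\,d\xi$, yielding, after letting $R \to \infty$, the pointwise a.e.\ inequality $\zeta_\dl Q^{+}(f,f) \ge Q^{+}_\dl$ (note that $\zeta_\dl \ge (1+\dl f)^{-1} > 0$ by \eqref{45}, so $Q^{+}(f,f)$ is controlled on a set of full measure). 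Once this local integrability is secured, letting $n \to \infty$ and then $\i \to 0$ in the Egorov splitting produces the matching reverse bound and hence the identity $Q^{+}_\dl = \zeta_\dl Q^{+}(f,f)$.
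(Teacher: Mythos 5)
Your overall Egorov strategy, the decomposition into the two pieces, and the treatment of each piece (weak-$\ast$ convergence of $(1+\dl f^n)^{-1}$ tested against $\phi Q^+(f,f)$; uniform convergence on $E^c$; equi-integrability on $E$) coincide with the paper's proof. You have also correctly identified the one genuinely delicate point, namely that $Q^{+}(f,f)$ is not a priori in $L^1_{loc}$ in the DiPerna--Lions framework and that some argument is required before the Egorov splitting is legitimate. The problem is that the argument you offer for this point does not work.

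The inequality you claim, $\zeta_\dl Q^{+}(f,f)\ge Q^{+}_\dl$, is a \emph{lower} bound on $Q^{+}(f,f)$ and therefore cannot establish integrability no matter what you know about $\zeta_\dl$; the parenthetical ``$\zeta_\dl>0$, so $Q^+(f,f)$ is controlled on a set of full measure'' is where the logic slips, since dividing a lower bound by $\zeta_\dl$ still gives a lower bound. Moreover the sandwich you start from cannot produce that inequality: after testing against $\psi$ and letting $n\to\infty$ it gives, at best,
\begin{equation*}
Q^{+}_\dl\ \ge\ (1+\dl R)^{-1}\,Q^{+}(f,f)\,\chi_{\{f\le R\}},
\end{equation*}
and sending $R\to\infty$ kills the right-hand side because $(1+\dl R)^{-1}\to 0$; the quantity $\zeta_\dl$ never enters. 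What the paper actually uses is the reversed, \emph{truncated} sandwich
\begin{equation*}
(1+\dl f^n)^{-1}\bigl(Q^{+}(f^n,f^n)\wedge R\bigr)\ \le\ (1+\dl f^n)^{-1}Q^{+}(f^n,f^n).
\end{equation*}
The left-hand side is bounded by $R$, and since $Q^{+}(f^n,f^n)\wedge R\to Q^{+}(f,f)\wedge R$ strongly in $L^p_{loc}$ (by statement (4) and dominated convergence) while $(1+\dl f^n)^{-1}\rightharpoonup \zeta_\dl$ weak-$\ast$, the product converges weakly to $\zeta_\dl\bigl(Q^{+}(f,f)\wedge R\bigr)$. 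Passing to the limit in the inequality gives $\zeta_\dl\bigl(Q^{+}(f,f)\wedge R\bigr)\le Q^{+}_\dl$, and monotone convergence in $R$ then yields the \emph{upper} bound $\zeta_\dl\,Q^{+}(f,f)\le Q^{+}_\dl$. Combined with $\zeta_\dl\ge(1+\dl f)^{-1}$ from \eqref{45} this gives $Q^{+}(f,f)\le(1+\dl f)\,Q^{+}_\dl$, which is integrable on $\{f\le M\}$; since $|\{f>M\}\cap\mathcal{A}|$ can be made arbitrarily small by Chebyshev, this exceptional set can be absorbed into the Egorov set $E$, and the splitting argument then goes through. In short: the plan is right, but the inequality must go the other way, it must come from the $\wedge R$ truncation rather than from the $\chi_{\{f^n\le R\}}$ cut-off, and the usable control on $Q^+(f,f)$ is local (up to a small set) rather than a.e.\ pointwise.
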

\begin{proof}
Indeed, let $\mathcal{A}$ be an arbitrary compact subset of
$\R^6\times[0,\infty)$. By the Egorov's theorem and the fourth
statement of Theorem \ref{2T1},  for
each $\i>0$ there exists a measurable set $E$  with the  measure of $E$ not greater than $\i$ (i.e., $|E|\le\i$),
up to a subsequence $Q^{+}(f^n,f^n)$ converges uniformly to $Q^{+}(f,f)$ on $E^c$ and
$Q^{+}(f,f)$ is integrable on $E^c$. Then, for all $\phi\in
L^\infty(\R^6\times(0,\infty))$ supported in $\mathcal{A}$, we
have
\begin{equation*}
\begin{split}
&\left|\int_{\mathcal{A}}\phi\{\gamma'_\dl(f^n)Q^{+}(f^n,f^n)-\zeta_\dl
Q^{+}(f,f)\}dxd\xi dt\right|\\
&\le
\|\phi\|_{L^\infty}\int_E\left|\gamma'_\dl(f^n)Q^{+}(f^n,f^n)-\zeta_\dl
Q^{+}(f,f)\right|dxd\xi
dt\\
&\quad+\left|\int_{E^c\cap\mathcal{A}
}\phi\{\gamma'_{\dl}(f^n)-\zeta_\dl\}Q^+(f,f)dxd\xi
dt\right|\\
&\quad+\|\phi\|_{L^\infty}|E^c\cap \mathcal{A}|
\sup_{E^c}|Q^{+}(f^n,f^n)-Q^{+}(f,f)|,
\end{split}
\end{equation*}
where the third term goes to $0$ as $n$ goes to $\infty$, for each
$\i>0$ by the uniform convergence of $Q^+(f^n,f^n)$ to $Q^+(f,f)$
on $E^c$. And so does the second term since
$$\phi\chi_{E^c}Q^{+}(f,f)\in L^1(\R^6\times(0,\infty)).$$
 Finally, since
$\gamma'_\dl(f^n)Q^{+}(f^n,f^n)$ is weakly relatively compact in
$L^1(\R^3_x\times K\times (0,T))$ for all compact sets $K\subset
\R^3_\xi$, the first term can be made arbitrarily small uniformly
in $n$ if we let $\i$ go to $0$.

Notice also that $\zeta_\dl Q^{+}(f,f)\in L^1(\R^3_x\times K\times
(0,T))$ by following the similar argument as before, we can show
that $\zeta_\dl (Q^{+}(f,f)\wedge R)$ is the weak limit of
$\f{1}{1+\dl f^n}(Q^{+}(f^n,f^n)\wedge R)$, where $a\wedge
b=\min\{a,b\}$. Thus, \eqref{47} follows.
\end{proof}

Now, we use \eqref{45}-\eqref{47}  in \eqref{41} to obtain
\begin{equation}\label{48}
\f{\partial \gamma_\dl}{\partial
t}+\Dv_x(\xi\gamma_\dl)+(E+\xi\times B)\cdot\nabla \gamma_\dl\ge
\gamma'_\dl(f) Q(f,f)
\end{equation}
in $\mathcal{D}'$.
%This inequality holds in the sense of
%distributions on $\R^6\times(0,\infty)$ and in fact, it also holds
%in renormalized sense, that is, the $\mathcal{D}'$ inequality
%still holds if we replace $\gamma_\dl$ by $\beta(\gamma_\dl)$
%where $\beta\in C^1([0,\infty); \R)$ is nondecreasing and then we
%place $\gamma'_\dl(f)Q(f,f)$ by
%$\beta'(\gamma_\dl)\gamma'_\dl(f)Q(f,f)$.
We conclude this first step by proving that $\gamma_\dl$ satisfies
the initial condition: $$\gamma_\dl|_{t=0}=\gamma_\dl(f_0).$$
Indeed, in view of the equation satisfied by $\gamma_\dl(f^n)$, we
know that $$\f{\partial \gamma_\dl(f^n)}{\partial t}\in L^1(0,T,
W^{-n, 1}(\R^6))$$ for $n>0$ large enough, which,  combined with the
fact $\gamma_\dl(f^n)\in L^\infty(0,T; L^1(\R^6))$ and the Aubin-Lions
lemma, implies that $$\gamma_\dl(f^n)\rightarrow \gamma_\dl
\qquad\textrm{ in}\qquad C([0,T]; W^{-s, 1}(\R^6))$$ for any
$s>1$. But, by the assumption,
$\gamma_\dl(f^n)|_{t=0}=\gamma_\dl(f^n_0)$ converges in
$L^1(\R^6)$ and thus in $W^{-s, 1}(\R^6)$ to $\gamma_\dl(f_0)$.
Thus, we conclude that $\gamma_\dl$ satisfies the initial
condition.

\bigskip
\subsection{Step Two: $\gamma_\dl=\gamma_\dl(f)$ and $f^n$ converges
in $L^1$ to $f$.} To this end, we consider
$$\gamma_\dl(f)-\gamma_\dl=\tau_\dl\in C([0,\infty); L^p(\R^6))$$
and  observe that $\tau_\dl$ satisfies,  in view of \eqref{41} and
\eqref{48},
\begin{equation}\label{49}
\f{\partial}{\partial t}\tau_\dl+\Dv_x(\xi \tau_\dl)+(E+\xi\times
B)\cdot\nabla_\xi \tau_\dl\le 0
\end{equation}
in $\mathcal{D}'$ with
\begin{equation}\label{410}
\tau_\dl\ge 0\quad \textrm{a.e. on}\quad
\R^6\times(0,\infty),\quad\tau_\dl|_{t=0}=0\quad\textrm{a.e.
on}\quad\R^6.
\end{equation}
Then,  for $\tau_\dl$ we have
\begin{Lemma}
$$\tau_\dl=0.$$
\end{Lemma}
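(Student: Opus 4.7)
The plan is to adapt the proof of Lemma \ref{1L2} to the present subsolution setting. Since $\tau_\dl \geq 0$ is a.e.\ nonnegative and $\tau_\dl \leq \gamma_\dl(f) \leq f$, the pointwise bound gives $\tau_\dl \in L^\infty(0,T; L^1(\R^6))$ with $|\xi|^2 \tau_\dl \in L^\infty(0,T; L^1(\R^6))$ and $\tau_\dl \in L^2_x(L^1_\xi)$ inherited from the corresponding estimates on $f$. Thus $\tau_\dl$ sits in exactly the functional space used in Lemma \ref{1L2}, and the same integrability of $E \in L^\infty(0,T; L^2 \cap L^5)$, $B \in L^\infty(0,T; L^2)$ is available. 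The only novelty is that $\tau_\dl$ satisfies \eqref{49} with inequality rather than equality.

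First I would regularize $\tau_\dl$ exactly as in Lemma \ref{1L1}: convoluting \eqref{49} in $\xi$ by $\kappa_{\mu(\varepsilon)}$ and then in $x$ by $\kappa_\varepsilon$ produces a smooth (in $(x,\xi)$) subsolution
$$
\f{\partial \tau_\dl^{\varepsilon,\mu(\varepsilon)}}{\partial t} + \mathcal{B}\cdot\nabla \tau_\dl^{\varepsilon,\mu(\varepsilon)} \le \mathcal{A}_\varepsilon
$$
with $\mathcal{A}_\varepsilon \to 0$ in $L^\infty([0,T], L^1_{(x,\xi),loc} \cap L^\infty_{(x,\xi),loc})$ as $\varepsilon \to 0$. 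The commutator estimates go through verbatim because they rely only on the $L^\infty$ bound on $\tau_\dl$ (inherited from $\gamma_\dl(f) \le \dl^{-1}\ln(1+\dl f)$) and on the $L^1_x(W^{1,1}_\xi)$ regularity of $E+\xi\times B$, which is unchanged. Since the regularization is by convolution with a nonnegative kernel, the sign of the inequality is preserved.

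Next I would test against the product of nonnegative cut-offs $\psi_m(x)\phi_n(\xi)$ as in Lemma \ref{1L2}. Integrating the regularized inequality against $\psi_m\phi_n\geq 0$ over $\R^6$, using $\Dv_\xi(E+\xi\times B)=0$ and integration by parts, yields after letting $\varepsilon \to 0$
$$
\f{d}{dt}\int_{\R^6}\tau_\dl\,\psi_m\phi_n\,dxd\xi \le \int_{\R^3}\nabla_x\psi_m\cdot\Big(\int_{\R^3}\xi\,\tau_\dl\phi_n\,d\xi\Big)dx + \int_{\R^6}\psi_m\,\tau_\dl\,(E+\xi\times B)\cdot\nabla_\xi\phi_n\,d\xi\,dx.
$$
The two right-hand terms are controlled exactly as in Lemma \ref{1L2}: the first one is $O(1/m)$ using $|\xi|\tau_\dl \le |\xi| f \in L^\infty(L^1)$ via the $R$-optimization, and the second one tends to $0$ as $n \to \infty$ for $m$ fixed, by Lebesgue dominated convergence using $\tau_\dl \in L^2_x(L^1_\xi)$, $E, B \in L^2_x$, and the annular support of $\nabla\phi$.

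Letting first $n \to \infty$ and then $m \to \infty$ gives
$$
\f{d}{dt}\int_{\R^6}\tau_\dl\,dxd\xi \le 0.
$$
Combined with $\tau_\dl|_{t=0}=0$ and $\tau_\dl \ge 0$ a.e., this forces $\int \tau_\dl\,dxd\xi \equiv 0$, hence $\tau_\dl = 0$ a.e.\ for all $t \ge 0$. The main obstacle is the subsolution version of the regularization-commutator step: one must check that the mollification procedure of Lemma \ref{1L1} is compatible with the inequality in \eqref{49}, which it is because convolution with a nonnegative kernel is a positive operator and the commutator error $\mathcal{A}_\varepsilon$ converges to $0$ in norm (not merely in distribution), so the inequality passes to the limit unambiguously.
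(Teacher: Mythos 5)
There is a genuine gap. Your plan hinges on transporting the full hypotheses of Lemma~\ref{1L2} onto $\tau_\dl$, but those hypotheses are not available. First, you assert an $L^\infty$ bound on $\tau_\dl$ ``inherited from $\gamma_\dl(f)\le\dl^{-1}\ln(1+\dl f)$''; but $\gamma_\dl(f)=\dl^{-1}\ln(1+\dl f)$ is unbounded as $f\to\infty$, so $\tau_\dl$ is \emph{not} in $L^\infty_{x,\xi}$. Without that, the commutator estimates in Lemma~\ref{1L1} (which control $[(E+\xi\times B)\cdot\nabla_\xi,\kappa_\mu](\cdot)$ by $\|\cdot\|_{L^\infty_\xi}$) do not apply to $\tau_\dl$ directly. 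Second, you claim $\tau_\dl\in L^2_x(L^1_\xi)$ ``inherited from the corresponding estimates on $f$'', but a renormalized solution $f$ of VMB is only controlled by \eqref{ee}, i.e.\ $f\in L^\infty(0,T;L^1(\R^6))$ with finite second moment and entropy; no $L^2_x(L^1_\xi)$ or $L^\infty_{x,\xi}$ bound is available. In Theorem~\ref{1T1} those were additional hypotheses on the data, not consequences of being a renormalized solution. Since the dominated-convergence argument for the Lorentz-force error term in Lemma~\ref{1L2} pairs $f\in L^2_x(L^1_\xi)$ against $E,B\in L^2_x$, your claim that this term is ``controlled exactly as in Lemma~\ref{1L2}'' fails.

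The paper's proof circumvents both problems. It first replaces $\tau_\dl$ by the bounded truncation $\beta_\i(\tau_\dl)=\tau_\dl/(1+\i\tau_\dl)\le 1/\i$, which is still a nonnegative subsolution with zero initial data and lies in every $L^p$; this restores the $L^\infty_x$ input needed in the cut-off argument. For the Lorentz-force error, it does \emph{not} invoke $L^2$ integrability of $B$. Instead, writing the $\xi$-cut-off error on the annulus $\{n\le|\xi|\le 2n\}$, it interpolates the $L^1_x$ bound (which decays like $n^{-3}$ times a vanishing tail by the second-moment estimate \eqref{L2}) against the $L^\infty_x$ bound (which grows like $C_\i n^2$ from the truncation and the annulus volume) to get smallness in $L^1(0,t;L^p_x)$ for $p\le 5/2$; the $E$-part is then paired with $E\in L^2_x$, while the $\xi\times B$-part uses the moment estimate \eqref{1116} and the extra assumption $B\in L^\infty(0,T;L^5_x)$. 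Finally one lets $n\to\infty$, then $\i\to 0$ via Fatou, to recover $\int\tau_\dl\le 0$. In short: the missing ideas in your plan are (i) the truncation $\beta_\i(\tau_\dl)$ replacing the unavailable $L^\infty$ bound, and (ii) the interpolation-plus-$B\in L^5$ argument replacing the unavailable $L^2_x(L^1_\xi)$ bound for the Lorentz error term.
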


\begin{proof}
Formally, we only need to integrate \eqref{49} over $\R^6$ to get
\begin{equation}\label{41000}
\f{d}{dt}\int_{\R^6}\tau_\dl dxd\xi\le 0\quad\textrm{in}\quad
\mathcal{D}'(0,\infty).
\end{equation}
Then \eqref{41000} with \eqref{410} yield: $\tau_\dl=0$
on $\R^6\times(0,\infty)$.

Our main objective now is to justify \eqref{41000}. In order to do
so, we introduce the function $\phi\in C_0^\infty(\R^3)$ with
\begin{equation*}
\phi(z)=
\begin{cases}
1,\qquad\textrm{if}\qquad |z|\le 1;\\
0,\qquad\textrm{if}\qquad |z|\ge 2.
\end{cases}
\end{equation*}
Notice that $\beta_{\i}(\tau_{\dl})=\f{\tau_{\dl}}{1+\i
\tau_{\dl}}$ also satisfies \eqref{49} and \eqref{410},  and
$\beta_{\i}(\tau_\dl)\in C([0,\infty); L^p(\R^6))$ for $1\le
p<\infty$ and $\i>0$, since $$|\beta_\i(x)-\beta_\i(y)|\le |x-y|$$
for all $x, y\ge 0$. Then we multiply \eqref{49} by
$\phi\left(\f{x}{n}\right)\phi\left(\f{\xi}{n}\right)$, and
integrate the resulting inequality over $\R^6\times(0,t)$ for all
$t\ge 0$ to obtain
\begin{equation}\label{411}
\begin{split}
&\int_{\R^6}\beta_{\i}(\tau_\dl)\phi\left(\f{x}{n}\right)\phi\left(\f{\xi}{n}\right)dxd\xi\\
&\quad\le\int_0^tds \int_{\R^6}dx d\xi \beta_{\i}(\tau_{\dl})
\cdot\Big(\f{\xi}{n}\cdot\nabla\phi\left(\f{x}{n}\right)\cdot\phi\left(\f{\xi}{n}\right)\\
&\qquad+\f{1}{n}(E+\xi\times
B)\cdot\nabla\phi\left(\f{\xi}{n}\right)\phi\left(\f{x}{n}\right)\Big).
\end{split}
\end{equation}

Recall that
\begin{equation}\label{L2}
\begin{split}
&\sup\left\{\int_{\R^6}\beta_{\i}(\tau_{\dl})|\xi|^2dxd\xi: \quad
t\in[0,T],\quad\i\ge 0,\quad \dl\ge 0\right\}\\
&\quad\le\sup_{t\in
[0,T]}\left\{\int_{\R^6}f|\xi|^2dxd\xi\right\}<\infty,
\end{split}
\end{equation}
since $\beta_\i(\tau_\dl)\le \tau_\dl\le\gamma_\dl(f)\le f$ for
all $T\in (0,\infty)$. Hence, for the terms on the right hand side
of \eqref{411}, we have
\begin{equation*}
\begin{split}
&\int_0^tds\int_{\R^6}dxd\xi
\beta_{\i}(\tau_{\dl})\left|\f{\xi}{n}\cdot\nabla\phi\left(\f{x}{n}\right)\right|\phi\left(\f{\xi}{n}\right)\\
&\quad\le \int_0^t\int_{\R^6}\beta_{\i}(\tau_{\dl})\chi_{\{n\le|x|\le
2n\}}2\|\phi\|_{L^\infty}\|\nabla\phi\|_{L^\infty}dxd\xi\\
&\quad\le 2\int_0^t\int_{\R^6}f\chi_{\{n\le|x|\le
2n\}}2\|\phi\|_{L^\infty}\|\nabla\phi\|_{L^\infty}dxd\xi\rightarrow
0
\end{split}
\end{equation*}
as $n\rightarrow \infty$ and
\begin{equation*}
\begin{split}
&\int_0^tds\int_{\R^6}dxd\xi
\beta_{\i}(\tau_{\dl})\f{1}{n}\phi\left(\f{x}{n}\right)\left|(E+\xi\times
B)\cdot \nabla\phi\left(\f{\xi}{n}\right)\right|\\
&\le \left(\int_0^tds \int_{\R^6}dx d\xi
\beta_{\i}(\tau_{\dl})\f{1}{n}|E+\xi\times B|\chi_{\{n\le|\xi|\le
2n\}}dxd\xi\right)\|\phi\|_{L^\infty}\|\nabla\phi\|_{L^\infty}.
\end{split}
\end{equation*}
Observing that, because of \eqref{L2},
\begin{equation*}
\begin{split}
\left\|\int_{\R^3}d\xi
\beta_{\i}(\tau_{\dl})\f{1}{n}\chi_{\{n\le|\xi|\le
2n\}}\right\|_{L^1(0,t;
L^1(\R^3_x))}&\le\f{1}{n^3}\left\|\int_{\R^3}d\xi
f|\xi|^2\chi_{\{n\le|\xi|\le 2n\}}\right\|_{L^1(0,t;
L^1(\R^3_x))}\\&=\f{1}{n^3}\i_n,\quad \textrm{with}\quad
\i_n\rightarrow 0,
\end{split}
\end{equation*}
while of course we have for some $C_{\i}>0$,
$$\left\|\int_{\R^3}d\xi
\beta_{\i}(\tau_{\dl})\f{1}{n}\chi_{\{n\le|\xi|\le
2n\}}\right\|_{L^1(0,t; L^\infty(\R^3_x))}\le C_{\i} n^2.$$
Therefore, we deduce from the H\"{o}lder inequality that we have for
all $\i>0$,
$$\int_{\R^3}d\xi \beta_{\i}(\tau_{\dl})\f{1}{n}\chi_{\{n\le|\xi|\le
2n\}}\to 0,$$
in $L^1(0,t; L^{p}(\R^3_x))$ for all $1\le p\le \f{5}{2}$ as $n\rightarrow \infty$, and hence
in particular in $L^1(0,t;L^{2}(\R^3))$. This implies
\begin{equation}\label{1117}
\int_0^tds\int_{\R^3}\int_{\R^3}\beta_\i(\tau_\dl)\f{1}{n}\chi_{\{n\le|\xi|\le
2n\}}d\xi |E|dx\rightarrow 0,\quad\textrm{in}\quad L^1((0,t)\times
\R^6)
\end{equation}
as $n\rightarrow \infty$, since $E\in L^\infty(0,t; L^2(\R^3_x))$.

On the other hand, using \eqref{1116} and the fact
$\beta_\i(\tau_\dl)\le f$, we obtain
\begin{equation}\label{1118}
\begin{split}
&\int_0^tds\int_{\R^6}dxd\xi
\beta_\i(\tau_\dl)\f{1}{n}|\xi||B|\chi_{\{n\le|\xi|\le 2n\}}\\
&\le\f{2}{n}\int_0^tds \left(\int_{\R^3}f|\xi|d\xi\right)|B|dx\\
&\quad\le \f{C}{n}\int_0^tds
\left(\int_{\R^3}f|\xi|^2d\xi\right)^{\f{4}{5}}|B|dx\\
&\quad\le
\f{C}{n}t\sup_{s\in(0,t)}\left\{\int_{\R^3}f|\xi|^2d\xi\right\}\|B\|_{L^\infty(0,t;
L^5(\R^3_x))}\\
&\quad \rightarrow 0,
\end{split}
\end{equation}
as $n\rightarrow \infty$.
Hence, combining \eqref{1117} and \eqref{1118} together,  we get
$$\int_0^tds \int_{\R^6}dxd\xi
\beta_{\i}(\tau_{\dl})\f{1}{n}|E+\xi\times B|\chi_{\{n\le|\xi|\le
2n\}}\rightarrow 0$$ as $n\rightarrow \infty$.

Finally, letting first $n$ go to $\infty$ and then $\i$ go to $0$
in \eqref{411}, we deduce, by Fatou's lemma,
$$\int_{\R^6}\tau_\dl(x,\xi,t)dxd\xi \le 0,\quad\textrm{for
all}\quad t\ge 0,$$
which, combined with \eqref{410}, implies that
$\tau_\dl=0$ on $\R^6\times(0,\infty)$ almost everywhere.
\end{proof}

In other words, $\gamma_\dl(f^n)$ weakly converges to
$\gamma_\dl(f)$. Since $\gamma_\dl$ is strictly concave on
$[0,\infty)$, we deduce from classical functional analysis
arguments that $f^n$ converges in measure to $f$ on
$\R^6\times(0,T)$ for all $T\in (0,\infty)$, see \cite{f2}.
%(Notice that
%$\gamma_\dl\left(\f{f^n+f}{2}\right)-\f{1}{2}(\gamma_\dl(f^n)+\gamma_\dl(f))\rightarrow
%0,\quad\textrm{in}\quad L^1(\R^6\times(0,T))$.)
This convergence implies that
\begin{equation}\label{413}
f^n\rightarrow f\qquad\textrm{in}\qquad L^p(0,T; L^1(\R^6)),
\end{equation}
for all $1\le p<\infty$ and $T\in (0,\infty)$. Indeed, by the
equi-integrability of the sequence $\{f^n\}_{n=1}^\infty$ and the
integrability of $f$ in $L^1([0,T]\times\R^6)$, we know that for
any $\i>0$, there exists $R>0$ and $\dl>0$ such that
\begin{equation}\label{4130}
\sup_{n\in {\mathbb N}}\int_{([0,T]\times B_R\times
B_R)^c}|f^n-f|dtdxd\xi\le\i,
\end{equation}
and
\begin{equation}\label{41300}
\sup_{n\in N}\int_{G}|f^n-f|dtdxd\xi\le\i,
\end{equation}
for all set $G\subset[0,T]\times\R^6$ with $|G|\le\dl$.

On the other hand, on the set $[0,T]\times B_R\times B_R$, since
up to a subsequence, $f^n$ converges to $f$ almost everywhere, by
Egorov's theorem, for the given $\dl>0$ as above, there exists a
subset $H\subset[0,T]\times B_R\times B_R$ with $|H|\le\dl$ such
that $f^n$ converges uniformly to $f$ on $([0,T]\times B_R\times
B_R)\cap H^c$. Therefore, using \eqref{41300},
\begin{equation}\label{413000}
\begin{split}
&\int_{[0,T]\times B_R\times B_R}|f^n-f|dtdxd\xi \\
&=\int_{([0,T]\times B_R\times B_R)\cap H^c}|f^n-f|dtdxd\xi+\int_{H}|f^n-f|dtdxd\xi\\
&\le\i+\int_{([0,T]\times B_R\times B_R)\cap H^c}|f^n-f|dtdxd\xi.
\end{split}
\end{equation}
Notice that the last term in \eqref{413000} tends to $0$ as
$n\rightarrow \infty$ since the uniform convergence of $f^n$ to
$f$ in $H^c$. Hence, combining \eqref{4130}, \eqref{41300} and
\eqref{413000}, we conclude that
$$f^n\rightarrow f\qquad\textrm{in}\qquad L^1(0,T; L^1(\R^6)),$$
which, with the uniform bound of $f^n$ in $L^\infty(0,T;
L^1(\R^6))$, implies \eqref{413}.

\bigskip
\subsection{Step Three: The convergence in $C([0,T]; L^1(\R^6))$.}
It  only remains to show that $f^n$ converges to $f$ in
$C([0,T]; L^1(\R^6))$ using \eqref{413}. Indeed, because of
\eqref{ee} and \eqref{313}, it is clearly enough to show that, for
each $\dl>0$, $T\in(0,\infty)$, $K$ compact set in $\R^6$, we have
\begin{equation}\label{414}
\beta_\dl(f^n)\rightarrow\beta_\dl(f)\quad\textrm{in}\quad
C([0,T]; L^1(K)).
\end{equation}

For this purpose, we take $\phi\in C_0^\infty(\R^6)$ such that
$\phi=1$ on $K$, $\phi\ge 0$,  and we use \eqref{32} to deduce that
for all $t\ge 0$,
\begin{equation}\label{415}
\begin{split}
\int_{\R^6}\beta_\dl(f^n)^2\phi dxd\xi=\int_0^t\int_{\R^6}dxd\xi
&\Big(\f{2\beta_\dl(f^n)}{1+\dl f^n}\times
Q(f^n,f^n)\phi\\&\quad+\beta_\dl(f^n)^2(\xi\cdot\nabla_x\phi+(E^n+\xi\times
B^n)\cdot\nabla_\xi\phi)\Big).
\end{split}
\end{equation}
Then, due to \eqref{413}, $\beta_\dl(f^n)$ converges to
$\beta_\dl(f)$ in $L^p(\R^6\times(0,T))$ for all $1\le p<\infty$
and $T\in (0,\infty)$,  and one can check easily that the right-hand
side of \eqref{415} converges uniformly in $t\in [0,T]$ to the
same expression with $f^n$ replaced by $f$. Since $\beta_\dl(f)$
is a renormalized solution, this expression is also given by
$\int_{\R^6}\beta_\dl(f)^2\phi dxd\xi$. In other
words, we have
\begin{equation}\label{416}
\begin{split}
\int_{\R^6}\beta_\dl(f^n)^2\phi dxd\xi\rightarrow
\int_{\R^6}\beta_\dl(f)^2\phi dxd\xi,
\end{split}
\end{equation}
uniformly in $t\in [0,T]$, for all $T\in (0,\infty)$.

In addition, since \eqref{32} implies $$\f{\partial
\beta_\dl(f^n)}{\partial t}\in L^1(0,T; W^{-n,1}(\R^6))$$ for
large enough $n>0$, and
$$\beta_\dl(f^n)\in L^1(0,T; L^1(\R^6)),$$
by the Aubin-Lions lemma, we know that $\beta_\dl(f^n)$ converges
to $\beta_\dl(f)$ in $C([0,T]; W^{-s,1}_{loc}(\R^6))$ for any
$s>1$. Therefore, if we consider
$L^2_{\phi}=L^2(\textrm{supp}\phi,\phi dx)$, since
$\{\beta_{\dl}(f^n)\}_{n}$ is bounded in
$L^\infty(0,T;L^2_{\phi})$, we deduce that $\beta_{\dl}(f^n)$
converges uniformly on $[0,T]$ to $\beta_\dl(f)$ in $L^2_{\phi}$
endowed with the weak topology, which, combined with \eqref{416}
and the fact that $\beta_\dl(f)\in C([0,\infty); L^2_{\phi})$
implies that $\beta_\dl(f^n)$ converges to $\beta_\dl(f)$ in
$L^2_{\phi}$ strongly and uniformly in $[0,T]$. Hence, \eqref{414}
follows.

%This concludes the proof of Theorem \ref{2T2}, since the fact that
%$f$ is a solution of VBM follows by straightforward limiting
%arguments that we leave to the reader.

\bigskip

\section{Large Time Behavior}
In this section, we are devoted to the study of the large time
behavior of the renormalized solution to VMB. Indeed, let
$f(t,x,\xi)$ be a renormalized solution to VMB with finite energy
and finite entropy in view of \eqref{ee}. Then, for every sequence
$\{t_n\}_{n=1}^\infty$ going to infinity, there exists a
subsequence $\{t_{n_k}\}_{k=1}^\infty$ and a local time-dependent
Maxwellian $m$ such that $f_{n_k}(t, x, \xi)= f(t+t_{n_k}, x,
\xi)$ converges weakly in $L^1((0,T)\times \R^6)$ to $m$ for every
$T>0$. More precisely, we have the following theorem:

\begin{Theorem}\label{3T1}
Let $f(t, x, \xi)$ be a renormalized solution to VMB and assume
that $b>0$ almost everywhere. Then, for every sequences $t_n$
going to infinity, there exists a subsequence $t_{n_k}$ and a
local time-dependent Maxwellian $m(t, x, \xi)$ such that
$f_{n_k}(t, x, \xi)=f(t+t_{n_k}, x, \xi)$ converges weakly in
$L^1((0,T)\times \R^6)$ to $m(t, x, \xi)$ for every $T>0$.
Moreover, the Maxwellian satisfies the Vlasov-Maxwell equations:
\begin{subequations}
\begin{align}
&\f{\partial m}{\partial t}+\xi\cdot\nabla_x m+(E+\xi\times
B)\cdot\nabla_\xi m=0,\\
&\f{\partial E}{\partial t}-\nabla\times B=-\int_{\R^3}m\xi d\xi,\qquad \Dv B=0,\\
&\f{\partial B}{\partial t}+\nabla\times E=0,\qquad \Dv
E=\int_{\R^3}md\xi,
\end{align}
\end{subequations}
in the sense of renormalizations.
\end{Theorem}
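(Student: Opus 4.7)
\medskip

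\textbf{Proof proposal for Theorem \ref{3T1}.}

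The plan is to exploit the time-translation invariance of the VMB system together with the entropy production bound in \eqref{ee}. Set $f_n(t,x,\xi) = f(t+t_n, x, \xi)$, $E_n(t,x) = E(t+t_n, x)$, and $B_n(t,x) = B(t+t_n, x)$. Because the bounds in \eqref{ee} are invariant under time translation, the sequence $(f_n, E_n, B_n)$ satisfies them uniformly on every finite interval $[0,T]$, so up to extraction we may assume $f_n \rightharpoonup m$ weakly in $L^1((0,T)\times \R^6)$ and $E_n, B_n$ converge weakly-$*$ in $L^\infty(0,T; L^2(\R^3))$ to some $E, B$. The hypothesis that the ambient renormalized solution is of the type covered by Theorem \ref{2T1} (in particular that the higher integrability on $E$ and $B$ persists under translation) will be used to invoke the weak stability machinery on $(f_n, E_n, B_n)$.

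The decisive input is the entropy dissipation estimate, namely the finiteness on $(0,\infty)$ of the integral
\begin{equation*}
\int_0^\infty \int_{\R^3}dx \int_{\R^6} d\xi\, d\xi_*\int_{S^2} d\omega\, b\,(f'f_*'-ff_*)\log\frac{f'f_*'}{ff_*} \le C,
\end{equation*}
which follows from \eqref{ee}. After the time shift, the dissipation on $(0,T)$ associated to $f_n$ equals the dissipation of $f$ on $(t_n, t_n+T)$ and therefore tends to zero as $n\to\infty$. Using the convexity/lower semi-continuity property recorded in Remark \ref{R1}, passing to the (weak) limit yields
\begin{equation*}
\int_0^T\!\!\int_{\R^3}\!dx \int_{\R^6}\! d\xi\, d\xi_* \int_{S^2}\! d\omega\, b\,(m'm_*'-m\,m_*)\log\frac{m'm_*'}{m\,m_*} = 0.
\end{equation*}
Since $b>0$ almost everywhere and the integrand is pointwise nonnegative, this forces $m'm_*' = m\,m_*$ almost everywhere on $\R^3\times \R^6\times S^2 \times (0,T)$. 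By the classical characterization of the equilibria of the Boltzmann collision operator (cf. the argument in \cite{DL5, LD}), this algebraic identity combined with enough integrability (the finite mass, momentum and energy inherited from \eqref{ee}) implies that $m$ has the form of a local Maxwellian
\begin{equation*}
m(t,x,\xi) = \frac{\rho(t,x)}{(2\pi T(t,x))^{3/2}}\exp\!\left(-\frac{|\xi - u(t,x)|^2}{2\,T(t,x)}\right),
\end{equation*}
for suitable macroscopic fields $\rho, u, T$.

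Next, to derive the equations satisfied by $(m,E,B)$, I apply the weak stability result Theorem \ref{2T1} to the translated sequence $(f_n, E_n, B_n)$: the hypotheses \eqref{ee} and the higher integrability on $E_n, B_n$ are preserved under translation, so $\beta(f_n)$ passes to the limit $\beta(m)$ in the sense of distributions in the renormalized formulation \eqref{df}, while $E_n, B_n$ converge so as to satisfy \eqref{e212}--\eqref{e213} with current $j_m = \int m\xi\,d\xi$ and density $\rho_m = \int m\,d\xi$. The crucial observation is that for a local Maxwellian $m$ the collision operator satisfies $Q(m,m) \equiv 0$, hence $\beta'(m) Q(m,m) = 0$ and the right-hand side of \eqref{df} drops out. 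This yields precisely the Vlasov equation
\begin{equation*}
\frac{\partial m}{\partial t} + \xi\cdot\nabla_x m + (E+\xi\times B)\cdot\nabla_\xi m = 0
\end{equation*}
(in the renormalized sense), coupled to the Maxwell system with source $(\rho_m, j_m)$.

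The principal obstacle will be the rigorous identification of $Q(m,m)=0$ on the weak limit: strictly speaking Theorem \ref{2T1} tells us only that $Q^\pm(f_n,f_n)$ have certain weak limits, and one must verify that these limits coincide with $Q^\pm(m,m)$ and cancel. This is handled by combining the a.e.\ convergence of $f_n$ to $m$ (which follows in the style of Theorem \ref{2T2} from the propagation of compactness, using that the initial data $f_n|_{t=0} = f(t_n,\cdot)$ are bounded in $L\log L$ and hence relatively compact up to a subsequence) with the continuity of $Q^\pm$ against the $(1+f_n)^{-1}$ truncation established in the proof of Theorem \ref{2T1}. A secondary technical point is ensuring that the integrability assumption on $E_n$ in $L^\infty(0,T;L^5(\R^3))$ and on $B_n$ in $L^\infty(0,T;L^s(\R^3))$ for some $s>5$ (needed to invoke Theorem \ref{2T1}) holds uniformly in $n$; this is inherited from the corresponding assumption on $(E,B)$ by pure time translation, since these norms are translation-invariant in $t$.
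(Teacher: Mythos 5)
Your proposal follows essentially the same route as the paper's proof: time-translation invariance of the \textit{a priori} bounds \eqref{ee}, weak compactness of the translated triple, vanishing of the entropy dissipation on the shifted time intervals, lower semi-continuity (Remark \ref{R1}) to transfer the vanishing dissipation to the weak limit $m$, the pointwise identity $m'm'_* = mm_*$ forcing $m$ to be a local Maxwellian, and an application of Theorem \ref{2T1} so that $m$ is a renormalized solution whose renormalized equation reduces to the Vlasov--Maxwell system because $Q(m,m)=0$.

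The place where your argument goes astray is the final paragraph. You worry that Theorem \ref{2T1} produces only some unidentified weak limits of $Q^\pm(f_n,f_n)$ and that one must separately check that these agree with $Q^\pm(m,m)$. That identification is already part of the conclusion of Theorem \ref{2T1}: its full statement (carried out via Lemma \ref{2l3}) is that the weak limit is a renormalized solution of VMB satisfying \eqref{df} with $Q(m,m)$ itself on the right-hand side, not with an abstract weak limit. Once $m$ is simultaneously a renormalized solution and a local Maxwellian, $Q(m,m)=0$ and the renormalized Vlasov equation follows with no further argument --- which is exactly how the paper concludes. Your attempted patch is, moreover, incorrect: you claim that the shifted initial data $f(t_{n},\cdot)$ are ``bounded in $L\log L$ and hence relatively compact'' in $L^1(\R^6)$. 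An $L\log L$ bound (together with the $\nu(x)$ and $|\xi|^2$ weights in \eqref{ee}) yields equi-integrability and tightness, hence weak $L^1$ relative compactness by Dunford--Pettis, but it does not preclude oscillations and therefore does not give strong $L^1$ compactness. Consequently you cannot invoke Theorem \ref{2T2}, which requires strong $L^1$ convergence of the initial data. Fortunately, as explained above, that invocation was never needed.
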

\begin{Remark}
When the spatial domain is a periodic box or a bounded domain with
the reverse reflexion boundary or the specular reflexion boundary,
we can expect, as in \cite{DD, LD}, that the local Maxwellian $m$
in Theorem \ref{3T1} is actually global; that is, $m$ is
independent of $t, x$.
\end{Remark}
\begin{Remark}
Our large time behavior result is only sequential; that is, the
Maxwellian could depend on our choice of the sequence
$\{t_n\}_{n=1}^\infty$.
\end{Remark}
\begin{proof}[Proof of Theorem \ref{3T1}]
Notice that since $f(t, x, \xi)$ is a renormalized solution to
VMB, it automatically holds:
\begin{equation}\label{52}
\begin{split}
&\sup_{t\in [0,\infty)}\left(\int_{\R^6}f(1+|\xi|^2+\nu(x)+|\log
f|)dxd\xi+\int_{\R^3}(|E|^2+|B|^2)dx\right)\\
&\quad+\int_0^\infty\int_{\R^3}dx\int_{\R^6}d\xi
d\xi_*\int_{S^2}d\omega b(f'f'_*-ff_*)\log
\f{f'f'_*}{ff_*}<\infty.
\end{split}
\end{equation}
Therefore, $f_n(t, x, \xi)=f(t+t_n, x, \xi)$ is weakly compact in
$L^1((0,T)\times \R^6)$ for every $T>0$ and each sequence of
positive numbers $\{t_n\}_{n=1}^\infty$ going to $\infty$.  Similarly, $E_n(t,x)=E(t+t_n, x)$, $B_n(t, x)=B(t+t_n, x)$ are
weakly compact in $L^\infty(0,T; L^2(\R^3))$.
 Then, the weak
compactness of $f_n(t, x, \xi)$ in $L^1((0,T)\times \R^3)$ implies
that there exists a subsequence $\{t_{n_k}\}_{k=1}^\infty$ and a
function $m\in L^1((0,T)\times \R^6)$ such that the function
$f_{n_k}$ converges weakly to $m$ in $L^1((0,T)\times \R^6)$ while
the weak compactness of $B_n(t, x)$ and $E_n(t, x)$ implies that we
can choose $t_{n_k}$ such that $B_{n_k}$ and $E_{n_k}$ converge
weakly* to $B$ and $E$ respectively in $L^\infty(0,T; L^2(\R^3))$.
Notice that, applying the velocity average lemma, we know
$$\int_{\R^3}f_{n_k}d\xi\rightarrow \int_{\R^3}m d\xi\qquad
\textrm{in}\qquad L^1(0,T; L^1(\R^3)),$$ and
$$\int_{\R^3}f_{n_k}\xi d\xi\rightarrow \int_{\R^3}m\xi d\xi\qquad
\textrm{in}\qquad L^1(0,T; L^1(\R^3)).$$ Hence, according to
\eqref{e212} and \eqref{e213}, the electric field $E$ and the
magnetic field $B$ satisfies
\begin{equation*}
\f{\partial E}{\partial t}-\nabla\times B=-\int_{\R^3}m\xi d\xi,
\end{equation*}
\begin{equation*}
\f{\partial B}{\partial t}+\nabla\times E=0,
\end{equation*}
with
$$\Dv B=0,\qquad \Dv E=\int_{\R^3}md\xi,$$
in the sense of distributions.

In order to prove that $m$ is a Maxwellian, we denote
\begin{equation*}
\begin{split}
&d_k:=\int_{0}^{T}\int_{\R^3}dx\int_{\R^6}d\xi
d\xi_*\int_{S^2}d\omega
b({f_{n_k}}'{f_{n_k}}'_*-{f_{n_k}}{f_{n_k}}_*)\log
\f{{f_{n_k}}'{f_{n_k}}'_*}{{f_{n_k}}{f_{n_k}}_*}\\
&\quad=\int_{t_{n_k}}^{T+t_{n_k}}\int_{\R^3}dx\int_{\R^6}d\xi
d\xi_*\int_{S^2}d\omega b(f'f'_*-ff_*)\log \f{f'f'_*}{ff_*}.
\end{split}
\end{equation*}
Then, the estimate \eqref{52} implies that $d_k$ converges to $0$
as $k$ goes to $\infty$.

On the other hand, in view of the first statement of Theorem
\ref{2T1} or arguing as \cite{DL5}, for all smooth nonnegative
functions $\psi, \phi$ with compact support, we have, up to a
subsequence,
\begin{equation}\label{53}
\begin{split}
&\int_{\R^6}d\xi d\xi_*\int_{S^2}d\omega
b{f_{n_k}}'{f_{n_k}}'_*\phi(\xi)\psi(\xi_*)\\
&\quad\rightarrow\int_{\R^6}d\xi d\xi_*\int_{S^2}d\omega bm(t, x,
\xi') m(t, x, \xi'_*) \phi(\xi)\psi(\xi_*),
\end{split}
\end{equation}
and
\begin{equation}\label{54}
\begin{split}
&\int_{\R^6}d\xi d\xi_*\int_{S^2}d\omega
b{f_{n_k}}{f_{n_k}}_*\phi(\xi)\psi(\xi_*)\\
&\quad\rightarrow\int_{\R^6}d\xi d\xi_*\int_{S^2}d\omega bm(t, x,
\xi) m(t, x, \xi_*) \phi(\xi)\psi(\xi_*),
\end{split}
\end{equation}
for almost all $(t,x)\in [0,T]\times \R^3$.

Furthermore, since $C(\R^3)$ is separable, we can also assume the
convergence in \eqref{53} and \eqref{54} holds for all nonnegative
function in $C(\R^3)$. Since $P(x,y)=(x-y)\ln (\f{x}{y})$ is a
nonnegative convex function for $x, y>0$, we have,
\begin{equation*}
\begin{split}
0&\le\int_{\R^6}d\xi d\xi_*\int_{S^2}d\omega \, b\,(m'm'_*-mm_*)\log
\f{m'm'_*}{mm_*}\psi(\xi_*)\phi(\xi)\\
&\le \liminf_{k\rightarrow \infty}d_k=0,
\end{split}
\end{equation*}
for almost all $(t,x)\in [0,T]\times \R^3$. Hence,
$$b(m'm'_*-mm_*)\log \f{m'm'_*}{mm_*}\psi(\xi')\phi(\xi)=0,$$
almost all $(t,x)\in [0,T]\times \R^3$. The nonnegativity of the
function $P(x,y)$ and the strict positivity of $b$ ensure that
$$m'm'_*=mm_*,$$
for almost all $(t,x, \xi, \xi_*, \omega)\in (0,T)\times \R^9\times
S^2$. According to Lemma 2.2 of \cite{FFM}, ] or  Section
3.2 of \cite{CC}, $m$ is a Maxwellian. Thus,
$$Q(m,m)=0.$$ Also, in view of Theorem \ref{2T1}, $m$ is still a
renormalized solution to VMB,   hence
\begin{equation*}
\f{\partial m}{\partial t}+\xi\cdot\nabla_x m+(E+\xi\times
B)\cdot\nabla_\xi m=0,
\end{equation*}
%with
%$$\f{\partial E}{\partial t}-\nabla\times B=-j,\quad \Dv B=0,$$
%and
%$$\f{\partial B}{\partial t}+\nabla\times E=0,\quad \Dv E=\r,$$
in the sense of renormalizations.
%Here
%$$\r=\int_{\R^3}md\xi,\quad j=\int_{\R^3}m\xi d\xi.$$
The proof is complete.
\end{proof}

\bigskip

\section{Remark on The Relativistic Vlasov-Maxwell-Boltmann Equations}

An extension of our analysis is possible to the relativistic
Vlasov-Maxwell-Boltzmann equations of the form (cf. \cite{CG, RTG1}):
\begin{subequations}\label{rvmb1111}
\begin{align}
&\f{\partial f}{\partial t}+\hat{\xi}\cdot\nabla_x
f+(E+\hat{\xi}\times B)\cdot\nabla_\xi f=Q(f,f),\quad x\in
\R^3,\quad \xi\in \R^3,\quad
t\ge 0,\\
&\f{\partial E}{\partial t}-\nabla\times B=-j, \quad \Dv B=0,\quad
\textrm{on}\quad \R^3_x\times(0,\infty),\\
&\f{\partial B}{\partial t}+\nabla\times E=0, \quad\Dv E=\r,\quad
\textrm{on}\quad \R^3_x\times(0,\infty),\\
&\r=\int_{\R^3}fd\xi, \quad j=\int_{\R^3}f\hat{\xi}
d\xi,\quad \textrm{on}\quad
\R^3_x\times(0,\infty),
\end{align}
\end{subequations}
with $$\hat{\xi}:=\f{\xi}{\sqrt{1+|\xi|^2}},$$ and
$$Q(f,f)=\int_{\R^3}d\xi_*\int_{S^2}d\omega
\f{b(\xi-\xi_*,\omega)}{\sqrt{1+|\xi|^2}\sqrt{1+|\xi_*|^2}}(f'f'_*-ff_*).$$
The corresponding conservation laws are given by
\begin{equation*}
\f{\partial \r}{\partial t}+\Dv_xj=0,
\end{equation*}
\begin{equation*}
\begin{split}
&\f{\partial }{\partial t}\left(\int_{\R^3}f\xi d\xi+E\times
B\right)\\
&\quad+\Dv_x\left(\int_{\R^3}\xi\otimes\hat{\xi}
fd\xi+\left(\f{|E|^2+|B|^2}{2}Id-E\otimes E-B\otimes B\right)
\right)=0,
\end{split}
\end{equation*}
and
\begin{equation*}
\begin{split}
&\f{\partial}{\partial
t}\left(\int_{\R^3}f\sqrt{1+|\xi|^2}d\xi+|E(t,x)|^2+|B(t,x)|^2\right)\\
&\quad\qquad+\Dv_x\left(\int_{\R^3}\hat{\xi}|\xi|^2
fd\xi+2E(t,x)\times B(t,x)\right)=0.
\end{split}
\end{equation*}
Then,  we can deduce that for any $t\in [0,T]$
\begin{equation}\label{rvmb1}
\int_{\R^6}f(t,x,\xi)(\sqrt{1+|\xi|^2}+\sqrt{1+|x|^2})dxd\xi\le
C(T),
\end{equation}
which implies $f|\xi|\in L^\infty(0,T; L^1(\R^6)$. Hence, following
the lines in Section 2, the existence of renormalized solution to
the relativistic Vlasov equation can be verified. More importantly, we
can further release the requirement on the integrability of the
electric field in $L^5$, since we no longer need the estimate on
$f|\xi|^2$ in $L^\infty(0,T; L^1(\R^6))$.

Note that for the relativistic VMB, the magnetic field has the same
integrability in the variable $x$ as the magnetic field due to
equivalence between $\xi$ and $\sqrt{1+|\xi|^2}$ when $\xi$ is
sufficiently large. More precisely, we have
\begin{Proposition}\label{rvmb2}
Assume that $f\in L^\infty((0,T)\times\R^6)$. Then for any
solution satisfying the above conservation laws, one has
$$\|\r(t,x)\|_{L^\infty(0,T; L^{\f{4}{3}}(\R^3))}\le C,\quad \|j(t,x)\|_{L^\infty(0,T; L^{\f{4}{3}}(\R^3))}\le
C,$$ where the constant $C$ depends on the energy of the initial
data and on $\|f\|_{L^\infty((0,T)\times\R^6)}$.
\end{Proposition}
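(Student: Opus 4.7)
The plan is to obtain the bound on $\rho$ by a standard pointwise interpolation argument using the $L^\infty$ bound on $f$ together with the first-order moment bound \eqref{rvmb1}, and then deduce the bound on $j$ from the trivial inequality $|\hat\xi|\le 1$.

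First I would fix $t\in[0,T]$ and, for each $x\in\R^3$, split the velocity integral at some radius $R=R(t,x)>0$ to be chosen:
\begin{equation*}
\r(t,x)=\int_{|\xi|\le R}f(t,x,\xi)\,d\xi+\int_{|\xi|>R}f(t,x,\xi)\,d\xi.
\end{equation*}
The first term is bounded by $C\|f\|_{L^\infty}R^3$. For the second, using that $|\xi|\le\sqrt{1+|\xi|^2}$ and $|\xi|>R$,
\begin{equation*}
\int_{|\xi|>R}f\,d\xi\le \f{1}{R}\int_{|\xi|>R}f\,\sqrt{1+|\xi|^2}\,d\xi\le \f{1}{R}\,e(t,x),
\end{equation*}
where $e(t,x):=\int_{\R^3}f(t,x,\xi)\sqrt{1+|\xi|^2}\,d\xi$. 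Optimizing the bound $C\|f\|_{L^\infty}R^3+R^{-1}e(t,x)$ in $R$ (the choice $R\simeq (e(t,x)/\|f\|_{L^\infty})^{1/4}$ is optimal) gives the pointwise interpolation inequality
\begin{equation*}
\r(t,x)\le C\,\|f\|_{L^\infty}^{1/4}\,e(t,x)^{3/4}.
\end{equation*}

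Next I raise this to the $4/3$ power and integrate in $x$ to get
\begin{equation*}
\int_{\R^3}\r(t,x)^{4/3}\,dx\le C\,\|f\|_{L^\infty}^{1/3}\int_{\R^3}e(t,x)\,dx=C\,\|f\|_{L^\infty}^{1/3}\int_{\R^6}f\sqrt{1+|\xi|^2}\,dxd\xi,
\end{equation*}
and the right-hand side is bounded uniformly in $t\in[0,T]$ by \eqref{rvmb1}. This yields the desired uniform bound on $\|\r(t,\cd)\|_{L^{4/3}(\R^3)}$. For the current, the identity $|\hat\xi|=|\xi|/\sqrt{1+|\xi|^2}\le 1$ gives $|j(t,x)|\le \r(t,x)$ pointwise, so the bound on $j$ in $L^\infty(0,T;L^{4/3}(\R^3))$ follows immediately.

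There is no serious obstacle here: the proof is essentially a one-parameter interpolation. The only point worth noting is that, in contrast to the classical (non-relativistic) setting where the kinetic energy bound $\int f|\xi|^2\,dxd\xi<\infty$ leads to $\r\in L^{5/3}$, the relativistic energy only controls the first moment $\int f\sqrt{1+|\xi|^2}\,dxd\xi$, which is precisely why the exponent here drops to $4/3$. The argument uses nothing about the equations themselves beyond the conservation of relativistic energy \eqref{rvmb1}, so it applies to any solution in the indicated class.
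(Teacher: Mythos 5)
Your proof is correct and follows essentially the same route as the paper: split the velocity integral at radius $R$, bound the near part by $C\|f\|_{L^\infty}R^3$ and the tail by $R^{-1}\int f\sqrt{1+|\xi|^2}\,d\xi$, optimize in $R$ to get the pointwise $3/4$-power bound, then integrate. Your write-up is actually cleaner than the paper's (which contains a couple of typographical slips in the decomposition and the $R^3$ factor), and your observation that $|\hat\xi|\le 1$ gives $|j|\le\r$ pointwise is a tidy way to dispatch the bound on $j$.
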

\begin{proof}
Indeed, we have
\begin{equation*}
\begin{split}
\r(t,x)&=\int_{|\xi|\le R}f(t,x,\xi)d\xi+\int_{|\xi|\le
R}f(t,x,\xi)d\xi\\
&\le
C\f{1}{R^3}\|f\|_{L^\infty}+R^{-1}\int_{\R^3}f\sqrt{1+|\xi|^2}d\xi\\
&\le C\left(\int_{\R^3}f\sqrt{1+|\xi|^2}d\xi\right)^{\f{3}{4}}
\end{split}
\end{equation*}
where for the last inequality, we optimize $R$ by taking
$$R=\left(\int_{\R^3}f\sqrt{1+|\xi|^2}d\xi\right)^{\f{1}{4}}.$$

The same computation also works for $j$.
\end{proof}

For any sequence of $f^n$ as in Section 3, by the H Theorem and
\eqref{rvmb1}, $f^n$ is weakly compact in $L^1((0,T)\times(\R^6)$.
And then, we can follow the lines in Section 4 and Section 5 to
show the corresponding weak stability for the relativistic VMB. One
difference is that, due to Proposition \ref{rvmb2}, we need to
assume the electric field $E(t,x)$, and the magnetic field
$B(t,x)$ are uniformly bound in $L^\infty(0,T; L^\alpha(\R^3))$
for some $\alpha>4$. When the weak stability and the existence of
renormalized solutions to \eqref{rvmb1111} are concerned, a
different assumption on the collision kernel need to assume, that
is,
$$(1+|z|^2)^{-1}\left(\int_{z+B_R}\f{A(\xi)}{\sqrt{1+|\xi|^2}}d\xi\right)\rightarrow
0,\quad \textrm{as}\quad |z|\rightarrow\infty,\quad \textrm{for
all}\quad R\in(0,\infty).$$

\bigskip\bigskip

\section*{Acknowledgments}

D. Wang's research was
supported in part by the National Science Foundation under Grant
DMS-0906160 and by the Office of Naval Research
under Grant N00014-07-1-0668.

\bigskip\bigskip

\end{document}